\newtheorem{thm}{Theorem}[section]
\newtheorem{lem}[thm]{Lemma}
\newtheorem{prop}[thm]{Proposition}
\newtheorem{cor}[thm]{Corollary}
\theoremstyle{definition}
\newtheorem{defn}[thm]{Definition}
\newtheorem{question}[thm]{Question}
\newtheorem{rem}[thm]{Remark}
\newtheorem{ex}[thm]{Example}
\newcommand{\Q}{\mathbb{Q}} 
\newcommand{\R}{\mathbb{R}}
\newcommand{\Z}{\mathbb{Z}}
\newcommand{\F}{\mathbb{F}}
\title{Sperner systems with restricted differences}
\author{Zixiang Xu}
\address{Extremal Combinatorics and Probability Group, Institute for Basic Science, Daejeon, South Korea}
\email{zixiangxu@ibs.re.kr}
\author{Chi Hoi Yip}
\address{School of Mathematics\\ Georgia Institute of Technology\\Atlanta, GA 30332\\ United States}
\email{cyip30@gatech.edu}
\subjclass[2020]{05D05, 11B75}
\keywords{Sperner theorem, separating polynomial, intersecting family, Hamming distance}
\date{}
\begin{document}

\begin{abstract}
Let $\mathcal{F}$ be a family of subsets of $[n]$ and $L$ be a subset of $[n]$. We say $\mathcal{F}$ is an $L$-differencing Sperner system if $|A\setminus B|\in L$ for any distinct $A,B\in\mathcal{F}$. Let $p$ be a prime and $q$ be a power of $p$. Frankl first studied $p$-modular $L$-differencing Sperner systems and showed an upper bound of the form $\sum_{i=0}^{|L|}\binom{n}{i}$. In this paper, we obtain new upper bounds on $q$-modular $L$-differencing Sperner systems using elementary $p$-adic analysis and polynomial method, extending and improving existing results substantially. Moreover, our techniques can be used to derive new upper bounds on subsets of the hypercube with restricted Hamming distances. One highlight of the paper is the first analogue of the celebrated Snevily's theorem in the $q$-modular setting, which results in several new upper bounds on $q$-modular $L$-avoiding $L$-intersecting systems. In particular, we improve a result of Felszeghy, Heged\H{u}s, and R\'{o}nyai, and give a partial answer to a question posed by Babai, Frankl, Kutin, and \v{S}tefankovi\v{c}.
\end{abstract}

\maketitle

\section{Introduction}
Throughout the paper, $n$ is a positive integer and $[n]$ denotes the set $\{1,2,\ldots, n\}$. The set of all subsets of $[n]$ is denoted by $2^{[n]}$, and $\binom{[n]}{k}$ denotes the collection of all subsets of $[n]$ of size $k$. Let $p$ be a prime and $q$ be a power of $p$. Given a positive integer $m$ and a set $L \subseteq \Z$, we write $r \in L \pmod m$ if $r \equiv \ell \pmod m$ for some $\ell \in L$.

A set system $\mathcal{F} \subseteq 2^{[n]}$ is said to be a \emph{Sperner system} (or an \emph{antichain}) if $A \not \subseteq B$ for any pair $A,B$ of distinct sets in $\mathcal{F}$. The celebrated Sperner's theorem \cite{S} states that if $\mathcal{F} \subseteq 2^{[n]}$ is Sperner, then $|\mathcal{F}| \le \binom{n}{\lfloor n/2\rfloor}$. In 1985, Frankl~\cite{F85} first showed the following refined version of Sperner's theorem:
\begin{thm}[Frankl]\label{F85}
Let $p$ be a prime and let $L \subseteq [p-1]$ with $|L|=s$. If $\mathcal{F}\subseteq 2^{[n]}$ such that $|A \setminus B| \in L \pmod p$ for any $A,B \in \mathcal{F}$ such that $A \not \subseteq B$, then 
\begin{equation*}
  |\mathcal{F}| \leq \sum_{i=0}^{s} \binom{n}{i}.
\end{equation*}
\end{thm}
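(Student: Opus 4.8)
The plan is to use the polynomial (linear-algebra) method, in the spirit of Frankl and Wilson. For each $A \in \mathcal{F}$ let $v_A \in \{0,1\}^n$ be its characteristic vector, and recall the identity $|A \setminus B| = |A| - |A \cap B| = |A| - \sum_{i \in A}(v_B)_i$ valid for all $A,B \subseteq [n]$. Working over the prime field $\F_p$, I would associate to each $A \in \mathcal{F}$ the polynomial
\[
f_A(x_1,\dots,x_n) \colonequals \prod_{\ell \in L}\Bigl(|A| - \sum_{i\in A} x_i - \ell\Bigr) \in \F_p[x_1,\dots,x_n],
\]
which has degree at most $s = |L|$. The goal is to show that $\{f_A : A \in \mathcal{F}\}$ is linearly independent inside a space of dimension $\sum_{i=0}^s \binom{n}{i}$.

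First I would check the "diagonal" evaluation properties. For distinct $A,B \in \mathcal{F}$ the hypothesis gives $|A\setminus B| \equiv \ell \pmod p$ for some $\ell \in L$, so the corresponding factor of $f_A$ vanishes at $v_B$, i.e.\ $f_A(v_B) = 0$ in $\F_p$. On the other hand $f_A(v_A) = \prod_{\ell\in L}(|A| - |A| - \ell) = \prod_{\ell \in L}(-\ell)$, which is nonzero in $\F_p$ precisely because $L \subseteq [p-1]$, so no element of $L$ is divisible by $p$. From $f_A(v_A) \neq 0$ and $f_A(v_B) = 0$ for $A \neq B$, linear independence follows by the usual argument: evaluating a relation $\sum_{A} c_A f_A = 0$ at $v_B$ yields $c_B f_B(v_B) = 0$, hence $c_B = 0$ for every $B \in \mathcal{F}$.

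Next I would pass to multilinear polynomials to obtain a finite-dimensional ambient space of the right size. Since the $f_A$ are only ever evaluated at $0/1$ vectors, replacing every power $x_i^k$ with $k\ge 1$ by $x_i$ produces a multilinear polynomial $\tilde f_A$ agreeing with $f_A$ on all of $\{0,1\}^n$; it still has degree at most $s$, and the evaluation properties above persist, so $\{\tilde f_A : A \in \mathcal{F}\}$ is again linearly independent. Every $\tilde f_A$ lies in the span of the multilinear monomials $\prod_{i\in S} x_i$ with $|S| \le s$, a space of dimension $\sum_{i=0}^{s}\binom{n}{i}$. Comparing dimensions gives $|\mathcal{F}| \le \sum_{i=0}^{s}\binom{n}{i}$.

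There is no deep obstacle here once the encoding is chosen correctly; the only point that genuinely requires care — and the crux of the argument — is that we record $|A\setminus B|$ rather than $|A\cap B|$ in the polynomial, so that $f_A(v_A)$ is a product of the "forbidden" residues $-\ell$. This is exactly where the hypothesis $L \subseteq [p-1]$, equivalently $0 \notin L \pmod p$, is indispensable: if $0$ were permitted as a residue in $L$, then $f_A(v_A)$ could vanish and the whole scheme collapses (and indeed the stated bound no longer holds).
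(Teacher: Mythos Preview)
Your proof is correct and follows essentially the same route as the paper. The paper does not prove Theorem~\ref{F85} separately (it is cited as Frankl's result), but the first part of Proposition~\ref{prop: p-adic} together with the choice $g(y)=\prod_{\ell\in L}(y-\ell)$ recovers exactly your argument: the polynomials $g_i(x)=g(|A_i|-v^{(i)}\cdot x)$ coincide with your $f_A$, and the linear-independence step is identical. The only cosmetic difference is that you work directly over $\F_p$, whereas the paper carries out the same computation over $\Q$ using $p$-adic valuations (so that the framework extends to prime powers $q=p^k$); for $q=p$ the two formulations are equivalent.
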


In the same paper, Frankl~\cite{F85} asked whether the above upper bound on $|\mathcal{F}|$ can be improved to $\binom{n}{s}$ provided that $\mathcal{F}$ is a Sperner system. This question has been answered for certain special parameters~\cite{2017MJCNT,2023StabilityComb}, but it remains widely open in general.

Results of similar flavors have been studied extensively in the context of $L$-intersecting systems; we refer to the excellent survey by Frankl and Tokushige~\cite{FT}. Let $L \subseteq \{0,1,\ldots, n\}$. Recall a set system $\mathcal{F} \subseteq 2^{[n]}$ is said to be \emph{$L$-intersecting} if $|A \cap B| \in L$ for any distinct $A,B$ in $\mathcal{F}$, and $\mathcal{F}$ is said to be \emph{$L$-avoiding} if $|A| \not \in L$ for each $A$ in $\mathcal{F}$. A result related to the maximum size of $L$-intersecting systems can be regarded as a refinement of the classical Erd\H{o}s-{K}o-{R}ado theorem~\cite{EKR}. Modular versions of $L$-intersecting systems are also well-studied. Let $m$ be a positive integer and $L \subseteq \{0,1,\ldots, m\}$. We say a set system $\mathcal{F} \subseteq 2^{[n]}$ is \emph{$m$-modular $L$-intersecting} if $|A \cap B|\in L  \pmod m $ for any distinct $A,B$ in $\mathcal{F}$, and \emph{$m$-modular $L$-avoiding} if $|A|\not \in L  \pmod m $ for any $A$ in $\mathcal{F}$. We refer to Section~\ref{sec: background} for a short survey of relevant results, which provides extra background and puts our main results in context.

In the same spirit, we say a set system $\mathcal{F} \subseteq 2^{[n]}$ to be \emph{$L$-differencing Sperner} if $|A \setminus B| \in L$ for any distinct $A,B$ in $\mathcal{F}$, where $L \subseteq [n]$. Note if $0 \notin L$, then an $L$-differencing Sperner set system is indeed Sperner. Let $m$ be a positive integer and $L \subseteq [m-1]$. We say a set system $\mathcal{F} \subseteq 2^{[n]}$ is \emph{$m$-modular $L$-differencing Sperner} if $|A \setminus B| \in L  \pmod m $ for any distinct $A,B$ in $\mathcal{F}$. 

Liu and Liu \cite[Theorem 1.4]{LL09} provided the following refinement on Theorem~\ref{F85}.
\begin{thm}[Liu/Liu]\label{LL}
Let $p$ be a prime and let $L \subseteq [p-1]$ with $|L|=s$. If $\mathcal{F}\subseteq 2^{[n]}$ is $p$-modular $L$-differencing Sperner, then 
\begin{equation*}
  |\mathcal{F}| \leq \sum_{i=0}^{s} \binom{n-1}{i}.
\end{equation*}
\end{thm}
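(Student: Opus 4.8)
The plan is to run the polynomial (linear-algebra) method, but to sharpen the dimension count behind Theorem~\ref{F85} by enlarging the family of incidence polynomials with a carefully chosen block of monomials. Working over $\F_p$, for each $B\in\mathcal{F}$ with characteristic vector $v_B\in\{0,1\}^n$ I would set
\[
 f_B(x)\colonequals\prod_{\ell\in L}\Bigl(\sum_{i\in[n]\setminus B}x_i-\ell\Bigr),
\]
a polynomial of degree $s$ in $x=(x_1,\dots,x_n)$. Since $\sum_{i\notin B}(v_A)_i=|A\setminus B|$, the hypothesis together with $0\notin L$ (which holds because $L\subseteq[p-1]$) gives $f_B(v_A)=0$ for distinct $A,B\in\mathcal{F}$ while $f_B(v_B)=\prod_{\ell\in L}(-\ell)\neq0$; this is the usual diagonal criterion, and already yields Frankl's bound $\sum_{j=0}^{s}\binom{n}{j}$. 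To gain the factor $n-1$, I would also bring in the multilinear monomials $m_S(x)\colonequals\prod_{i\in S}x_i$ over all $S\subseteq[n]$ with $n\in S$ and $|S|\le s$; there are exactly $\sum_{j=0}^{s-1}\binom{n-1}{j}$ of them. The key step is to prove that $\{f_B:B\in\mathcal{F}\}\cup\{m_S:n\in S,\ |S|\le s\}$ is linearly independent as functions $\{0,1\}^n\to\F_p$.

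For this I would start from a dependence $\sum_{B}\alpha_Bf_B+\sum_S\beta_Sm_S\equiv0$ on $\{0,1\}^n$ and peel off the coefficients in three passes. Evaluating at $v_A$ ($A\in\mathcal{F}$) gives $\alpha_A\prod_{\ell}(-\ell)+\sum_{S\subseteq A,\,n\in S}\beta_S=0$, and when $n\notin A$ the sum is empty, so $\alpha_A=0$ for all $A$ not containing $n$. Next I would restrict the identity to the face $x_n=0$: every surviving $m_S$ has $n\in S$ and hence vanishes there, while $f_B$ with $n\in B$ does not involve $x_n$ at all, leaving $\sum_{B\in\mathcal{F},\,n\in B}\alpha_Bf_B\equiv0$ on $\{0,1\}^{n-1}$. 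Finally, for a fixed $B_0\in\mathcal{F}$ with $n\in B_0$ I would evaluate this at the characteristic vector of $B_0\setminus\{n\}$; because $n$ lies in $B_0$ and in every competing $B$, one has $|(B_0\setminus\{n\})\setminus B|=|B_0\setminus B|$, which is $0$ for $B=B_0$ and lies in $L$ modulo $p$ otherwise, so only the $f_{B_0}$ term survives and $\alpha_{B_0}=0$. With all $\alpha_B=0$, the leftover relation $\sum_S\beta_Sm_S\equiv0$ forces all $\beta_S=0$, since distinct multilinear monomials are independent on $\{0,1\}^n$.

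To conclude I would observe that after reducing $x_i^2\to x_i$ each $f_B$ becomes a multilinear polynomial of degree at most $s$, as is each $m_S$, so the whole independent family lies in the $\sum_{j=0}^{s}\binom{n}{j}$-dimensional space of multilinear polynomials of degree $\le s$. This gives
\[
 |\mathcal{F}|+\sum_{j=0}^{s-1}\binom{n-1}{j}\le\sum_{j=0}^{s}\binom{n}{j},
\]
and Pascal's identity $\binom{n}{j}=\binom{n-1}{j}+\binom{n-1}{j-1}$ turns the right side minus the second left term into $\sum_{j=0}^{s}\binom{n-1}{j}$, which is the claimed bound.

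The main obstacle is choosing the auxiliary monomials correctly: they must vanish on the face $x_n=0$ (so that the $f_B$ with $n\in B$ can be isolated), must keep the combined family linearly independent, and must number exactly the deficiency $\sum_{j=0}^{s-1}\binom{n-1}{j}$ between Frankl's bound and the target; the block ``$n\in S$, $|S|\le s$'' is engineered to do all three. The second delicate point is the last evaluation — one has to check that replacing $B_0$ by $B_0\setminus\{n\}$ disturbs none of the relevant set-difference congruences, and this works precisely because the distinguished index $n$ is shared by $B_0$ and the rival sets, so deleting it alters neither $|B_0\setminus B|$ nor the diagonal value $\prod_{\ell}(-\ell)$. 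The remainder is routine degree/dimension bookkeeping.
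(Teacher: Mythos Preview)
Your proof is correct. The paper derives Theorem~\ref{LL} as an immediate corollary of the more general Proposition~\ref{prop: p-adic}, which is phrased over $\Q$ with $p$-adic valuations so as to handle arbitrary prime-power moduli $q$ uniformly; one just checks that $g(y)=\prod_{\ell\in L}(y-\ell)$ has $v_p(g(0))=0$, whence both hypotheses of the proposition hold trivially. Your argument instead works directly over $\F_p$, which is more elementary and self-contained for the prime case but does not carry over to $q=p^k$ with $k\ge 2$. The underlying mechanism is the same --- the separating polynomial $\prod_\ell(y-\ell)$ together with an auxiliary block of degree-$\le s$ polynomials tied to the distinguished coordinate $x_n$ --- but your auxiliary block $\{m_S:n\in S\}$ vanishes on the face $x_n=0$, whereas the paper primarily uses $(x_n-1)I_j$, vanishing on $x_n=1$; these are dual choices and the paper in fact mentions the $x_nI_j$ variant as well. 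Your three-pass order (first kill $\alpha_A$ for $n\notin A$ by evaluating at $v_A$, then restrict to $x_n=0$, then evaluate at $v_{B_0\setminus\{n\}}$) is a clean reorganisation of the paper's linear-independence argument, and parametrising via $\sum_{i\notin B}x_i$ rather than $|A_i|-v^{(i)}\cdot x$ simply swaps which set labels the polynomial and which is the evaluation point.
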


Inspired by \cite{01JCTA}, the following $q$-modular version result was proved by Xu and Liu~\cite{XL12}.

\begin{thm}[Xu/Liu] \label{XL}
Let $L=[s]$ and let $q$ be a prime power such that $q>s$. If $\mathcal{F}\subseteq 2^{[n]}$ is $q$-modular $L$-differencing Sperner, then 
\begin{equation*}
  |\mathcal{F}| \leq \sum_{i=0}^{s} \binom{n}{i}.
\end{equation*}
\end{thm}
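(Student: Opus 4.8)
The plan is to run the polynomial (linear–algebra) method over $\F_p$. Identify each $A\in\mathcal{F}$ with its characteristic vector $v_A\in\{0,1\}^n$, regarded inside $\F_p^n$, and note that $|A\setminus B|=\sum_{i\in A}(1-(v_B)_i)$; thus the affine form $z_A(y):=\sum_{i\in A}(1-y_i)$ evaluates to $|A\setminus B|$ at $y=v_B$ and to $0$ at $y=v_A$. The goal is to attach to each $A$ a multilinear polynomial $f_A\in\F_p[y_1,\dots,y_n]$ of degree at most $s$ for which the evaluation matrix $\big(f_A(v_B)\big)_{A,B\in\mathcal{F}}$ is a nonzero scalar multiple of the identity. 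Since the $f_A$ then are linearly independent and lie in the space of multilinear polynomials of degree at most $s$ in $n$ variables, which has dimension $\sum_{i=0}^{s}\binom{n}{i}$, this yields $|\mathcal{F}|\le\sum_{i=0}^{s}\binom{n}{i}$.

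Frankl's choice for the prime case, $f_A(y)=\prod_{\ell=1}^{s}\big(z_A(y)-\ell\big)$, gives $f_A(v_A)=(-1)^s s!$, which may vanish in $\F_p$ precisely when $p\le s<q$; circumventing this is the main obstacle, and it is exactly where one must use the $q$-modular (not merely $p$-modular) hypothesis. I would instead take $f_A(y)=\binom{z_A(y)-1}{s}$. Using $\binom{z-1}{s}=\sum_{j=0}^{s}(-1)^{s-j}\binom{z}{j}$ together with the cube identity $\binom{z_A(y)}{j}=\sum_{S\in\binom{A}{j}}\prod_{i\in S}(1-y_i)$, one checks that $f_A$ agrees on $\{0,1\}^n$ with a multilinear polynomial of degree at most $s$ with integer coefficients, so reduction modulo $p$ is legitimate even though $s!$ need not be invertible. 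On the diagonal this gives $f_A(v_A)=\binom{-1}{s}=(-1)^s$, a unit in $\F_p$.

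For the off-diagonal entries, given distinct $A,B$ write $|A\setminus B|=qj+\ell$ with $\ell\in\{1,\dots,s\}$ and $j\ge 0$, which is possible by the hypothesis $|A\setminus B|\in L\pmod q$. If $j=0$ then $f_A(v_B)=\binom{\ell-1}{s}=0$ because $\ell-1<s$. If $j\ge 1$ then $|A\setminus B|-1=qj+(\ell-1)$ with $q=p^k>s\ge \ell$, and I would invoke Kummer's theorem: $v_p\!\binom{qj+\ell-1}{s}$ equals the number of carries in adding $s$ and $qj+\ell-1-s$ in base $p$, and since the low-order $k$ digits of these two numbers represent $s$ and $q-s+\ell-1$ whose sum is $q+(\ell-1)\ge p^k$, there is at least one carry out of position $k-1$; hence $p\mid f_A(v_B)$. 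So modulo $p$ the evaluation matrix is $(-1)^s I$, the $f_A$ are linearly independent over $\F_p$, and the bound follows. The only delicate points are checking that $f_A$ really reduces to a degree-$\le s$ multilinear polynomial over $\F_p$ and the carry count in the Kummer step, and it is the latter that forces the assumption $q>s$.
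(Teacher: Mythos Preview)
Your argument is correct and is in essence the same polynomial method the paper runs (Proposition~\ref{prop: p-adic} applied with the separating polynomial $g(y)=\prod_{\ell=1}^{s}(y-\ell)$, the divisibility being supplied by Lemma~\ref{vps!}). The one genuine twist is your normalization: rather than keep $g$ with $v_p(g(0))=v_p(s!)$ and argue $p$-adically over $\Q$ as the paper does, you divide by $s!$ to use $\binom{y-1}{s}$ and then restore integer coefficients on the cube via $\binom{z_A}{j}=\sum_{S\in\binom{A}{j}}\prod_{i\in S}(1-y_i)$. This makes the diagonal value $(-1)^s$ a unit, so the linear independence can be read off directly over $\F_p$ without the ultrametric scaling step. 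Your Kummer computation is exactly the special case of Lemma~\ref{vps!} needed here. What the paper's $p$-adic formulation buys is uniformity: the same framework, with the additional condition $v_p(g(0))\le v_p(g(u\pm 1))$, yields the sharper bound $\sum_{i=0}^{s}\binom{n-1}{i}$ of Theorem~\ref{bchooses} (take $b=s$), whereas your version as written gives precisely Theorem~\ref{XL}.
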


Our first result improves and extends Theorem~\ref{XL} substantially. Note that the case that $L$ is an interval is of particular interest; we refer to Section~\ref{sec: interval} for related discussions.

\begin{thm}\label{bchooses}
Let $L=\{b-s+1, b-s+2,\ldots, b\}$ such that $s \leq b<q$, where $q$ is a power of a prime $p$. Assume that $p \nmid \binom{b}{s}$. If $\mathcal{F}\subseteq 2^{[n]}$ is $q$-modular $L$-differencing Sperner, then
\begin{equation*}
  |\mathcal{F}| \leq \sum_{i=0}^{s} \binom{n-1}{i}.
\end{equation*}
\end{thm}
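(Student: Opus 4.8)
The goal is to bound $|\mathcal{F}|$ for a $q$-modular $L$-differencing Sperner system where $L$ is an interval $\{b-s+1,\dots,b\}$ with $s \le b < q$ and $p \nmid \binom{b}{s}$, achieving the improved bound $\sum_{i=0}^s \binom{n-1}{i}$ rather than $\sum_{i=0}^s \binom{n}{i}$.

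**Main approach: polynomial method over $\mathbb{F}_q$ (or $\mathbb{Z}_p$) combined with the "drop a coordinate" trick.**

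The plan is to associate to each $A \in \mathcal{F}$ its characteristic vector $v_A \in \{0,1\}^n \subseteq \mathbb{F}_q^n$ and to build, for each $A$, a polynomial $f_A$ in $n$ variables whose evaluations separate $A$ from the other members of $\mathcal{F}$. The crucial identity is $|A \setminus B| = |A| - |A \cap B| = \sum_{i \in A}(1 - (v_B)_i)$, so if we fix $A$ and let $x$ range over $\{0,1\}^n$, the quantity $\sum_{i \in A} (1 - x_i)$ is a linear form. Since $\mathcal{F}$ is $q$-modular $L$-differencing Sperner, for $B \ne A$ this linear form (reduced mod $q$, hence interpreted in $\mathbb{F}_q$ or more carefully in $\mathbb{Z}/q\mathbb{Z}$) takes a value in $L \pmod q$, while for $B = A$ it equals $0$. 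I would therefore set
\begin{equation*}
g_A(x) = \prod_{\ell \in L}\Bigl(\ell - \sum_{i \in A}(1-x_i)\Bigr),
\end{equation*}
which vanishes at $v_B$ for all $B \ne A$ in $\mathcal{F}$ but is nonzero at $v_A$ — here is where the hypothesis $p \nmid \binom{b}{s}$ enters: evaluating $g_A$ at $x = v_A$ gives $\prod_{\ell \in L}\ell = b(b-1)\cdots(b-s+1) = s!\binom{b}{s}$, which is nonzero in $\mathbb{F}_q$ precisely because $p \nmid \binom{b}{s}$ and $s < q$. This establishes linear independence of the $g_A$ and hence $|\mathcal{F}| \le \dim(\text{span of the } g_A)$.

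**Reducing the dimension count and getting the $\binom{n-1}{i}$ improvement.**

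After multilinear reduction (replacing $x_i^2$ by $x_i$, valid since we only evaluate on $\{0,1\}^n$), each $g_A$ lies in the span of multilinear monomials of degree at most $s$, giving the crude bound $\sum_{i=0}^s \binom{n}{i}$ — this is essentially Theorem \ref{XL}. To shave off one coordinate, I would exploit the fact that the Sperner-type difference condition is not symmetric and allows a normalization: following the Liu–Liu strategy behind Theorem \ref{LL}, one can WLOG assume (by a compression/shifting argument, or by restricting attention to a sub-family and inducting) that no member of $\mathcal{F}$ contains the element $n$, or alternatively that $\emptyset \notin \mathcal{F}$ after translating; then the linear form $\sum_{i \in A}(1-x_i)$ only depends on coordinates $x_1,\dots,x_{n-1}$ together with the constant $1$, effectively living in an $(n-1)$-variable polynomial ring, which replaces $\binom{n}{i}$ by $\binom{n-1}{i}$ in the monomial count. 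Concretely, I would pass to the link or use the substitution that eliminates one variable using the affine relation among the $v_A$'s, mirroring how Theorem \ref{LL} improves Theorem \ref{F85}.

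**Expected main obstacle.**

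The routine part is the polynomial construction and the nonvanishing computation; the hypothesis $p \nmid \binom{b}{s}$ is tailor-made to make $\prod_{\ell \in L}\ell \ne 0$ in $\mathbb{F}_q$, and $b < q$ ensures all the factors $\ell - (\cdot)$ are genuine (no wraparound collapsing distinct residues). The delicate point is the dimension-reduction step to get $n-1$ instead of $n$: one must argue carefully that the relevant span of polynomials can be realized inside a polynomial ring in one fewer variable, which typically requires either a clean symmetry/compression reduction on $\mathcal{F}$ (showing the extremal configuration may be assumed to avoid a fixed point) or an explicit linear-algebraic projection that does not destroy linear independence of the $g_A$. Making this rigorous in the $q$-modular (not prime) setting — where $\mathbb{Z}/q\mathbb{Z}$ is not a field — is where I expect the real work to be; I would likely circumvent it by working with $p$-adic valuations or by lifting to $\mathbb{Z}_p$ and controlling denominators, exactly the "elementary $p$-adic analysis" advertised in the abstract, rather than naively reducing mod $p$.
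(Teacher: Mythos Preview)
Your outline has the right polynomial, but the key separation step is stated incorrectly and would not go through as written. You claim that $g_A(v_A)=\prod_{\ell\in L}\ell = s!\binom{b}{s}$ is ``nonzero in $\mathbb{F}_q$'' because $p\nmid\binom{b}{s}$ and $s<q$. This is false in general: the condition $s<q$ does \emph{not} force $p\nmid s!$, so once $s\ge p$ the diagonal value $s!\binom{b}{s}$ is divisible by $p$ and your ``nonzero vs.\ zero mod $p$'' dichotomy collapses. For instance, with $p=2$, $q=4$, $s=2$, $b=3$ one has $\binom{3}{2}=3$ coprime to $2$, yet $s!\binom{b}{s}=6\equiv 0\pmod 2$. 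The correct statement---and the one the paper actually proves---is a comparison of $p$-adic valuations: $v_p(g(0))=v_p\bigl(s!\binom{b}{s}\bigr)=v_p(s!)$ by the hypothesis, while for $u\in L+q\Z$ the value $g(u)$ is a product of $s$ consecutive integers one of which is divisible by $q$, hence $v_p(g(u))>v_p(s!)$ by the standard fact that $s!$ divides any product of $s$ consecutive integers (with a strict gain when one factor is a multiple of $q>s$). Linear independence is then argued over $\Q$, not over $\Z/q\Z$ or $\F_p$, via the ultrametric inequality. You do gesture toward $p$-adic valuations at the end, but you locate the difficulty only in the $n\to n-1$ step; in fact it is already essential for the basic separation.

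The $n\to n-1$ reduction is also not as you describe. You propose to compress so that no set in $\mathcal{F}$ contains $n$, after which the polynomials live in $n-1$ variables. But removing $n$ from a set can destroy both the Sperner property and the $L$-differencing condition, and there is no reason the extremal family avoids a fixed point. The paper's mechanism is completely different: it keeps all of $\mathcal{F}$, adjoins the auxiliary multilinear polynomials $f_j(x)=(x_n-1)\prod_{i\in B_j}x_i$ for $B_j\in\bigcup_{i<s}\binom{[n-1]}{i}$, and proves the enlarged family $\{p_i\}\cup\{f_j\}$ is linearly independent. The crux of that argument is a \emph{second} $p$-adic condition, namely $v_p(g(0))\le v_p(g(u-1))$ for all $u\in L+q\Z$; this holds here because $g(u-1)$ is again a product of $s$ consecutive integers, so its $p$-valuation is at least $v_p(s!)=v_p(g(0))$. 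That inequality (only $\le$, not $<$) is exactly what lets the mixed linear-independence argument close; it has no analogue in your compression sketch.
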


We refer to Remark~\ref{motivation} for some motivations behind Theorem~\ref{bchooses}. To the best knowledge of the authors, Theorem~\ref{bchooses} is the first instance where an analogue of Snevily's theorem (Theorem~\ref{S03}) holds in the $q$-modular setting. Moreover, by taking $L=[q-1]$, Theorem~\ref{bchooses} allows us to deduce the following ``$q$-modular Sperner theorem" immediately:
\begin{cor}\label{cor:qSperner}
Let $q$ be a prime power. Let $\mathcal{F}\subseteq 2^{[n]}$ be a $q$-modular Sperner system, that is, $|A \setminus B| \not \equiv 0 \pmod q$ for any distinct $A,B \in \mathcal{F}$. Then
\begin{equation*}
  |\mathcal{F}| \leq \sum_{i=0}^{q-1} \binom{n-1}{i}.
\end{equation*}
\end{cor}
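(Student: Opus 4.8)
The plan is to obtain this as the special case $L=[q-1]$ of Theorem~\ref{bchooses}. First I would note that saying $\mathcal{F}$ is a $q$-modular Sperner system is the same as saying $\mathcal{F}$ is $q$-modular $L$-differencing Sperner for $L=[q-1]=\{1,2,\dots,q-1\}$, since the condition $|A\setminus B|\not\equiv 0\pmod q$ precisely means $|A\setminus B|\equiv\ell\pmod q$ for some $\ell\in\{1,\dots,q-1\}$.

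Next I would put this $L$ into the interval shape $\{b-s+1,b-s+2,\dots,b\}$ demanded by Theorem~\ref{bchooses}. Choosing $b=q-1$ makes the largest element equal to $q-1$, and choosing $s=q-1$ makes the smallest element $b-s+1$ equal to $1$, so indeed $L=[q-1]$. The requirement $s\le b<q$ holds because $s=b=q-1<q$, and the divisibility hypothesis $p\nmid\binom{b}{s}$ is automatic here, as $\binom{b}{s}=\binom{q-1}{q-1}=1$.

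Theorem~\ref{bchooses} then yields $|\mathcal{F}|\le\sum_{i=0}^{s}\binom{n-1}{i}=\sum_{i=0}^{q-1}\binom{n-1}{i}$, which is exactly the asserted bound. I do not expect any genuine obstacle here: all of the $p$-adic and polynomial-method content is already carried by Theorem~\ref{bchooses}, and the corollary merely records the parameter choice $b=s=q-1$, for which the coprimality condition holds trivially.
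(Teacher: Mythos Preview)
Your proposal is correct and matches the paper's own derivation exactly: the paper states that Corollary~\ref{cor:qSperner} follows immediately from Theorem~\ref{bchooses} by taking $L=[q-1]$, and your verification of the hypotheses with $b=s=q-1$ and $\binom{q-1}{q-1}=1$ is precisely what is needed.
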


Note that if $q$ is a prime power, then the above theorem says that we always have the polynomial bound $O(n^{q-1})$ for a $q$-modular Sperner system. In fact, given that $\mathcal{F}=\binom{[n]}{q-1}$ is a $q$-modular Sperner system, the above upper bound is close to sharp. However, this is not true for an $m$-modular Sperner system if $m$ is not a prime power; see Remark~\ref{Non-prime-power moduli}.

Our next theorem shows that a similar result holds (with a slightly more complicated condition) if $L$ is an arithmetic progression. In particular, this allows us to extend Theorem~\ref{XL} to all homogeneous arithmetic progressions; see the deduction in Example~\ref{ex: AP}.  Recall that for a prime $p$ and an integer $n$, $v_p(n)$ denotes the largest non-negative integer $k$ such that $p^k \mid n$.

\begin{thm}\label{thm: AP}
Let $q$ be a power of a prime $p$. Let $L \subseteq [q-1]$ be an arithmetic progression $\{a,a+d,\ldots, a+(s-1)d\}$, where $a$ and $d$ are positive integers. Let $\mathcal{F}\subseteq 2^{[n]}$ be $q$-modular $L$-differencing Sperner. If $\sum_{\ell \in L} v_p(\ell)<\max\{(s-1)v_p(d)+v_p(q), sv_p(d)+ v_p(s!)+1\}$, then
\begin{equation*}
  |\mathcal{F}| \leq \sum_{i=0}^{s} \binom{n}{i}.
\end{equation*}
\end{thm}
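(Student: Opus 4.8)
The plan is to adapt the polynomial method used (implicitly) for Theorem~\ref{bchooses}, replacing the binomial-coefficient non-vanishing condition $p\nmid\binom{b}{s}$ by a more delicate $p$-adic estimate tailored to an arithmetic progression. To each set $A\in\mathcal{F}$ associate its characteristic vector $\mathbf{v}_A\in\{0,1\}^n\subseteq\Z_p^n$, and for distinct $A,B$ note that $|A\setminus B|=\sum_{i\in A}(1-\mathbf{v}_B(i))$, a linear expression in $\mathbf{v}_B$ (with the diagonal term $|A\setminus A|=0$). The key object is the polynomial
\begin{equation*}
  g_a(x)=\prod_{j=0}^{s-1}\bigl(x-a-jd\bigr)\in\Z_p[x],
\end{equation*}
which vanishes exactly on $L$ and has leading coefficient $1$; then for distinct $A,B\in\mathcal{F}$ the $q$-modular $L$-differencing hypothesis forces $g_a(|A\setminus B|)\equiv0\pmod{q}$ in the relevant sense, while $g_a(|A\setminus A|)=g_a(0)=(-1)^s\prod_{j=0}^{s-1}(a+jd)=(-1)^s\prod_{\ell\in L}\ell$. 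The whole argument will hinge on controlling $v_p$ of this diagonal value against the $p$-adic content of the coefficients of $g_a$ expressed in a suitable basis.

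First I would set up the $p$-adic analytic framework: expand $g_a(x)$ in the falling-factorial (binomial) basis $\binom{x}{0},\binom{x}{1},\ldots,\binom{x}{s}$, say $g_a(x)=\sum_{k=0}^{s}c_k\,k!\binom{x}{k}$ where $c_s=1$ and the other $c_k\in\Z$ are (up to sign) elementary-symmetric-type combinations of $a,a+d,\ldots,a+(s-1)d$. The point of the basis $\binom{x}{k}$ is that $\binom{|A\setminus B|}{k}$, as a function of $\mathbf{v}_B$, is a multilinear polynomial of degree $\le k$ whose restriction to $\{0,1\}^n$ is spanned by monomials $\prod_{i\in S}\mathbf{v}_B(i)$ with $|S|\le k$; this will ultimately give linear independence of the functions $f_A(\mathbf{x})=g_a\bigl(\sum_{i\in A}(1-x_i)\bigr)$ inside the space spanned by such low-degree multilinear monomials, yielding $|\mathcal{F}|\le\sum_{i=0}^{s}\binom{n}{i}$ once we can divide out by the $p$-adic valuation of the diagonal values. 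To get the improvement we want (and to handle the ``$\max$'' in the hypothesis), I expect we actually need two competing arguments: one using the factorization directly (giving the term $(s-1)v_p(d)+v_p(q)$, by splitting off one factor of $g_a$ and using that $q\mid$ the product of the remaining $s-1$ differences only after dividing by $d^{\,s-1}$), and one using the binomial-basis expansion together with the standard bound $v_p(k!)\le v_p(s!)$ for $k\le s$ (giving the term $sv_p(d)+v_p(s!)+1$). The displayed hypothesis $\sum_{\ell\in L}v_p(\ell)<\max\{\ldots\}$ is exactly the condition that makes at least one of these two normalizations succeed, i.e.\ that the diagonal value $g_a(0)=(-1)^s\prod_{\ell\in L}\ell$ has strictly smaller $p$-adic valuation than the obstruction coming from the off-diagonal reductions.

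Then I would carry out the linear-algebra step over the fraction field $\Q_p$ (or over $\Z_p$ after clearing denominators): show that if $\sum_{A}\lambda_A f_A\equiv0$ then evaluating at $\mathbf{v}_B$ and using $f_A(\mathbf{v}_B)\equiv0\pmod{p^{N}}$ for $A\neq B$ (for the appropriate $N$ determined by $v_p(q)$ and the chosen normalization) together with $v_p(f_B(\mathbf{v}_B))<N$ forces all $\lambda_B\equiv0$; the inequality in the hypothesis is precisely what guarantees $v_p(f_B(\mathbf{v}_B))<N$ after dividing $g_a$ by a suitable power of $p$. Since each $f_A$, restricted to $\{0,1\}^n$, lies in the span of $\{\prod_{i\in S}x_i : |S|\le s\}$, which has dimension $\sum_{i=0}^{s}\binom{n}{i}$, the bound follows. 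Finally, the deduction of Example~\ref{ex: AP} (recovering Theorem~\ref{XL} for homogeneous APs $L=\{d,2d,\ldots,sd\}$) should come by checking $\sum_{\ell\in L}v_p(\ell)=sv_p(d)+v_p(s!)<sv_p(d)+v_p(s!)+1$, which is automatic. \textbf{The main obstacle} I anticipate is the $p$-adic bookkeeping that produces the precise two-term maximum: one must verify that the coefficients $c_k k!$ of $g_a$ in the binomial basis, together with the off-diagonal reductions mod $q$, conspire so that dividing by $p^{\,v_p(g_a(0))}$ keeps everything $p$-integral and keeps $f_B(\mathbf{v}_B)$ a unit-ish quantity; getting the exponent $v_p(s!)+1$ (rather than a weaker bound) will require a careful comparison of $v_p(k!\cdot c_k)$ across $0\le k\le s$ and is where the ``$+1$'' genuinely enters.
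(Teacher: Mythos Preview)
Your overall strategy is the paper's: take $g(y)=\prod_{\ell\in L}(y-\ell)$, observe $v_p(g(0))=\sum_{\ell\in L}v_p(\ell)$, show $v_p(g(u))\ge\max\{(s-1)v_p(d)+v_p(q),\,sv_p(d)+v_p(s!)+1\}$ for every $u\in L+q\Z$, and then invoke Proposition~\ref{prop: p-adic} to get the bound. Your argument for the first term of the max is also the paper's (one factor $u-\ell_0$ contributes $\ge v_p(q)$, each of the other $s-1$ factors $u-\ell=(u-\ell_0)+(\ell_0-\ell)$ contributes $\ge v_p(d)$).

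The one place you diverge is the second term, and there your binomial--basis plan is an unnecessary detour that you yourself flag as the main obstacle. Tracking $v_p(c_k\,k!)$ across the expansion does not obviously produce the ``$+1$'', and the substitution that would turn $g$ into a falling factorial requires $(u-a)/d\in\Z$, which need not hold. The paper avoids all of this with a purely multiplicative computation: write $d=d'p^t$ with $t=v_p(d)$ and $\ell_0=a+k_0d$; since $(s-1)d<q$ forces $|k-k_0|<q/p^t$, one checks $v_p(u-(a+kd))=t+v_p\bigl((u-\ell_0)/p^t+k_0-k\bigr)$ for every $0\le k\le s-1$, hence
\[
v_p(g(u))=st+\sum_{k=0}^{s-1}v_p\!\bigl((u-\ell_0)/p^t+k_0-k\bigr).
\]
The right-hand sum is over $s$ \emph{consecutive} integers, one of which (the $k=k_0$ term) equals $(u-\ell_0)/p^t$ and is divisible by $q/p^t>s-1$; the strict case of Lemma~\ref{vps!} then gives $\sum_k v_p(\cdots)>v_p(s!)$, and the ``$+1$'' drops out for free. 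So replace your coefficient-comparison plan with this consecutive-integers argument and the proof goes through with no remaining obstacle.
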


Using Theorem~\ref{bchooses}, we can deduce upper bounds on $q$-modular $L$-differencing Sperner systems for an arbitrary interval $L \subseteq [q-1]$. Before stating the theorem, we need to introduce a notation. Let $q=p^k$. For each $1 \leq s \leq q-1$, we can write $s=(s_1, s_2, \ldots, s_k)_p$ in base-$p$ and define 
\begin{equation}\label{mu}
\mu_q(s)=s+\frac{q}{p^j}-p^{v_p(s)},    
\end{equation}
where $j$ is the smallest integer such that $s_j \neq p-1$. If $s=q-1$, then we simply define $\mu_q(s)=s$. Note that we always have $\mu_q(s)<q-1$ unless $s=q-1$.

\begin{thm}\label{thm: closure}
Let $q$ be a power of a prime $p$. Let $L \subseteq [q-1]$ be an interval of size $s$. If $\mathcal{F}\subseteq 2^{[n]}$ is $q$-modular $L$-differencing Sperner, then 
\begin{equation*}
  |\mathcal{F}| \leq \min \bigg\{\sum_{i=0}^{\mu_q(s)} \binom{n-1}{i} \quad, \quad \sum_{i=0}^{2^{s-1}} \binom{n}{i}\bigg\}.
\end{equation*}
Moreover, if $q=p^2$, then the following upper bound also holds:
\begin{equation*}
  |\mathcal{F}| \leq \sum_{i=0}^{2s-1} \binom{n}{i}.
\end{equation*}
\end{thm}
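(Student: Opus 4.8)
The plan is to establish the three bounds separately: two of them reduce to Theorems~\ref{bchooses} and~\ref{thm: AP} after enlarging $L$ to a well-chosen interval, and the bound $\sum_{i=0}^{2^{s-1}}\binom{n}{i}$ is proved by a fresh $p$-adic polynomial argument. Write $L=\{a,a+1,\dots,a+s-1\}$ with $1\le a\le a+s-1\le q-1$, and note that if $L\subseteq L'$ then $\mathcal F$ is automatically $q$-modular $L'$-differencing Sperner, since the condition $|A\setminus B|\in L\pmod q$ only weakens under enlarging $L$.

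\emph{The bound $\sum_{i=0}^{\mu_q(s)}\binom{n-1}{i}$.} I would prove the extension lemma that every interval $L\subseteq[q-1]$ of size $s$ is contained in an interval $L'\subseteq[q-1]$ of size at most $\mu_q(s)$ with $p\nmid\binom{\max L'}{|L'|}$; applying Theorem~\ref{bchooses} to such an $L'$ then yields $|\mathcal F|\le\sum_{i=0}^{|L'|}\binom{n-1}{i}\le\sum_{i=0}^{\mu_q(s)}\binom{n-1}{i}$. Since $\binom{\max L'}{|L'|}=\binom{\max L'}{\,\min L'-1\,}$, Lucas's theorem says the divisibility condition is exactly digit-domination $\min L'-1\preceq\max L'$ in base $p$, so the lemma is a question about how the base-$p$ digits of $a-1$ compare with those of $a+s-1$; the excess $\mu_q(s)-s=q/p^{j}-p^{v_p(s)}$ records the worst-case amount of one- or two-sided padding forced by a carry between these numbers, and one checks that it suffices via an explicit choice of $L'$ governed by the digits of $s$. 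This digit bookkeeping is, I expect, the least pleasant part of the argument, though it is elementary.

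\emph{The bound $\sum_{i=0}^{2^{s-1}}\binom{n}{i}$.} I would use the polynomial method over $\mathbb Q_p$. For $A\in\mathcal F$ with characteristic vector $v_A\in\{0,1\}^n$, consider $F_A(x)=\prod_{i=0}^{s-1}\bigl(\sum_{j\in A}x_j-|A|+a+i\bigr)^{c_i}\in\mathbb Z[x]$ for suitable multiplicities $c_i\ge 1$; after reducing modulo $x_j^2=x_j$ it becomes a multilinear polynomial of degree at most $m:=\sum_i c_i$, so the $F_A$ lie in a $\mathbb Q_p$-space of dimension $\sum_{i=0}^{m}\binom{n}{i}$. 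One computes $F_A(v_B)=\prod_{i}(a+i-|A\setminus B|)^{c_i}$, so with $V_i:=v_p(a+i)$ one has $v_p(F_A(v_A))=\sum_i c_iV_i$ for every $A$, whereas if $B\ne A$ and $|A\setminus B|\equiv a+i_0\pmod q$ (such $i_0\in\{0,\dots,s-1\}$ exists by the Sperner hypothesis) then $v_p(F_A(v_B))\ge c_{i_0}k+\sum_{i\ne i_0}c_iv_p(i-i_0)$, using $0<|i-i_0|<q$. From $v_p(i-i_0)\ge\min(V_i,V_{i_0})$, with equality when $V_i\ne V_{i_0}$, the domination $v_p(F_A(v_B))>v_p(F_A(v_A))$ for all $i_0$ follows once $c_{i_0}(k-V_{i_0})>\sum_{i:\,V_i>V_{i_0}}c_i(V_i-V_{i_0})$ for each $i_0$ (here $k-V_{i_0}\ge 1$ since $a+i_0<q$). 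Grouping indices by the distinct values $\lambda_1>\dots>\lambda_r$ of $(V_i)$, one can set $c_i=1$ on the top level and, descending, take $c_i$ on level $\lambda_t$ equal to the least integer exceeding $(k-\lambda_t)^{-1}\sum_{t'<t}(\lambda_{t'}-\lambda_t)S_{t'}$, where $S_{t'}$ is the total multiplicity on level $\lambda_{t'}$; since $\lambda_{t'}<k$, this integer is at most $S_1+\cdots+S_{t-1}$, so if level $\lambda_t$ has $n_t$ indices and $\sum_t n_t=s$, then $\sum_t S_t\le n_1\prod_{t\ge 2}(1+n_t)\le 2^{n_1-1}2^{s-n_1}=2^{s-1}$ by the crude inequalities $1+u\le 2^u$ and $u\le 2^{u-1}$, valid for every positive integer $u$. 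Hence $\deg F_A\le 2^{s-1}$, and linear independence of the $F_A$ over $\mathbb Q_p$ follows by the usual triangular argument (normalize a relation $\sum_A\beta_AF_A=0$ so that $\min_Av_p(\beta_A)=0$, evaluate at the $v_A$ realizing the minimum; the diagonal term has valuation $\sum_ic_iV_i$ and all others strictly more, a contradiction). Therefore $|\mathcal F|\le\sum_{i=0}^{2^{s-1}}\binom{n}{i}$. The crux is verifying that the displayed inequalities are genuinely sufficient and that the multiplicities they determine sum to at most $2^{s-1}$.

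\emph{The bound $\sum_{i=0}^{2s-1}\binom{n}{i}$ when $q=p^2$.} Now $v_p(q)=2$. If $s<p$ then $L$ contains at most one multiple of $p$, hence $\sum_{\ell\in L}v_p(\ell)\le 1<2$, and Theorem~\ref{thm: AP} applied to the arithmetic progression $L$ of common difference $1$ gives $|\mathcal F|\le\sum_{i=0}^{s}\binom{n}{i}\le\sum_{i=0}^{2s-1}\binom{n}{i}$. If $s\ge p$, let $D$ be the least integer $\ge\max L$ with $D\equiv p-1\pmod p$; then $D\le q-1$, the interval $L'=\{a,a+1,\dots,D\}$ has size $D-a+1\le s+p-1\le 2s-1$, and since the last base-$p$ digit of $D$ is $p-1$ and $\lfloor(a-1)/p\rfloor\le\lfloor D/p\rfloor$, Lucas's theorem gives $p\nmid\binom{D}{|L'|}=\binom{D}{\,a-1\,}$; Theorem~\ref{bchooses} then yields $|\mathcal F|\le\sum_{i=0}^{|L'|}\binom{n-1}{i}\le\sum_{i=0}^{2s-1}\binom{n}{i}$. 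Taking the minimum of the three bounds finishes the proof.
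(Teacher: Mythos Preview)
Your proposal is correct and, for the first and third bounds, matches the paper's proof essentially line for line: the paper also enlarges $L$ to a $q$-closed interval of size at most $\mu_q(s)$ (this is Lemma~\ref{lem: closure}) and then invokes Theorem~\ref{bchooses}; for $q=p^2$ the paper makes the same $s<p$ versus $s\ge p$ split, using Corollary~\ref{<k} (your use of Theorem~\ref{thm: AP} with $d=1$ is equivalent) in the first case and the inequality $\mu_{p^2}(s)\le s+p-1$ in the second (your explicit choice of $D$ is just a concrete realization of this).

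The only real difference is the $2^{s-1}$ bound: the paper simply cites Theorem~\ref{2^s}, which in turn rests on Proposition~\ref{prop: p-adic} together with the estimate $D(s,k)\le 2^{s-1}$ from \cite{01JCTA}, whereas you reprove that separating-polynomial estimate from scratch via the level-by-level multiplicity construction. Your argument is correct (and is essentially the original proof of \cite[Lemma~5.1]{01JCTA}), so this is extra work rather than a different route; within the paper's internal logic you could replace that whole paragraph by a one-line appeal to Theorem~\ref{2^s}.
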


The upper bounds in the above theorems also apply to problems with restricted symmetric differences, which were also widely studied~\cite{D73,HKP20,K66}; we refer to Section~\ref{sec:symmdiff} for a brief discussion. Furthermore, minimal modifications to the proof of the above theorems allow us to deduce new upper bounds on $q$-modular $L$-avoiding $L$-intersecting systems, which improve previous results significantly for certain ranges of $q$ and $|L|$; see Section~\ref{sec:intersecting}. Let $L \subseteq \{0,1,\ldots, q-1\}$ with $|L|=s$, and let $\mathcal{F} \subseteq 2^{[n]}$ be a $q$-modular $L$-avoiding $L$-intersecting system. In the following table, for different $L$ and $q$, we compare our upper bounds on $|\mathcal{F}|$ with the best-known upper bounds.

\begin{table}[ht]
\centering
\begin{tabular}{|m{4cm}|m{6cm}|m{4.5cm}|}
\hline
Requirements on $q$ and $L$ & New upper bounds & Best-known upper bounds  \\ \hline
general $L$ & $\sum\limits_{i=0}^{q-1}\binom{n}{i}$  [Theorem~\ref{improve_2^sss}] & $\sum\limits_{i=0}^{2^{s-1}}\binom{n}{i}$ [Theorem~\ref{2^sss}] \\ \hline
$L$ is an interval (in the modulo $q$ sense) & $\sum\limits_{i=0}^{\mu_q(s)}\binom{n}{i}$  [Theorem~\ref{improve_FHR}]& $\sum\limits_{i=s}^{q-1}\binom{n}{i}$ [Theorem~\ref{FHR}]\\ \hline
$L$ is an interval, $q=p^2$ & $\sum\limits_{i=0}^{2s-1}\binom{n}{i}$  [Theorem~\ref{p^2}] & $\sum\limits_{i=0}^{s^2/4+1}\binom{n}{i}$ [Theorem~\ref{2^sss}]\\ \hline
%$L$ is a homogeneous arithmetic progression & $\sum\limits_{i=0}^{s}\binom{n}{i}$ [Theorem~\ref{thm: AP}] & $\sum\limits_{i=0}^{2^{s-1}}\binom{n}{i}$ [Theorem~\ref{2^sss}]\\ \hline
%& $\sum\limits_{i=0}^{s}\binom{n-1}{i}$ if $L=[s]$ [Theorem~\ref{bchooses}] & $\sum\limits_{i=0}^{2s}\binom{n}{i}$ if $L=\{0,1,\ldots, s-1\}$ [Theorem~\ref{2s}]\\ \hline
%$L=[q-1]$ & $\sum\limits_{i=0}^{q-1}\binom{n-1}{i}$ [Corollary~\ref{cor:qSperner}] & $\binom{n}{q-1}$ [Theorem~\ref{HR}]\\ \hline
\end{tabular}
\caption{Comparisons between our new upper bounds and the best-known upper bounds on $q$-modular $L$-avoiding $L$-intersecting systems}
\label{table:comparisons}
\end{table}

By taking a prime $p>n$, Theorem~\ref{LL} implies the upper bound $\sum_{i=0}^s \binom{n-1}{i}$ on an $L$-differencing Sperner system from $2^{[n]}$. Under extra assumptions on the size of sets in $\mathcal{F}$, this upper bound can be improved \cite[Theorem 1.4]{LZ16}. However, it is more interesting to explore if this upper bound can be improved without any additional assumption. We have mentioned that the case $L=[s]$ is of special interest, especially because the lower bound $\binom{n}{s}$ is readily available and often believed to be sharp. Indeed, as an immediate corollary of the main result in~\cite{2017MJCNT}, Frankl showed the lower bound $\binom{n}{s}$ is sharp when $s=O(\sqrt{n})$ and asked if one can go beyond $O(\sqrt{n})$ \cite[Section 7]{2017MJCNT}. While we are not able to achieve this, we show an improved upper bound in the following theorem when $n/3<s\leq n/2$.

\begin{thm}\label{[s] sym}
Let $L=[s]$ such that $(n+2)/3\leq s \leq n/2$. Let $\mathcal{F} \subseteq 2^{[n]}$ such that $\mathcal{F}\subseteq 2^{[n]}$ is $L$-differencing Sperner. Then
$$
|\mathcal{F}|\leq \sum_{i=3s-n-1}^s \binom{n-1}{i}.
$$
\end{thm}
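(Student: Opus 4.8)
My approach would be the polynomial method, preceded by a reduction that disposes of the ``lower half'' of $\mathcal F$ for free. For each $A\in\mathcal F$ introduce
\[
f_A(x_1,\dots,x_n)\;=\;\prod_{\ell=1}^{s}\Bigl(\,\sum_{i\in[n]\setminus A}x_i-\ell\Bigr)\ \in\ \R[x_1,\dots,x_n].
\]
Evaluating at the characteristic vector $\chi_B$ of a set $B\subseteq[n]$ gives $f_A(\chi_B)=\prod_{\ell=1}^{s}\bigl(|B\setminus A|-\ell\bigr)$, which is $(-1)^{s}s!\neq 0$ when $B=A$ and vanishes when $B\neq A$ because $|B\setminus A|\in L=[s]$. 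Reducing modulo $\langle x_i^{2}-x_i\rangle$, the $f_A$ are therefore linearly independent multilinear polynomials of degree $\le s$, and since $f_A(\mathbf{0})\neq 0$, Theorem~\ref{bchooses} applied with a prime $q>n$ (and $b=s$) already confines them to a space of dimension $\sum_{i=0}^{s}\binom{n-1}{i}$, so the task is to remove the bottom $\sum_{i=0}^{3s-n-2}\binom{n-1}{i}$ of these dimensions.

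The first step is a cheap reduction. If every $A\in\mathcal F$ satisfies $|A|\le s$, then the constraint $|A\setminus B|\le s$ is automatic, so $\mathcal F$ is merely an antichain inside $\binom{[n]}{\le s}$; as $s\le n/2$, the LYM inequality gives $|\mathcal F|\le\binom ns=\binom{n-1}{s}+\binom{n-1}{s-1}\le\sum_{i=3s-n-1}^{s}\binom{n-1}{i}$ (using $3s-n-1\le s-1$), and we are done. Passing to complements preserves the hypothesis, since $|A^{c}\setminus B^{c}|=|B\setminus A|$, so the same conclusion holds if every $|A|\ge n-s$. Thus we may assume $\mathcal F$ contains a member of size $>s$; this is the regime in which the differencing condition becomes genuinely restrictive — for instance a set $A\in\mathcal F$ with $|A|=a>s$ meets every other member of $\mathcal F$ in at least $a-s$ points and has no $s$-subset lying in $\mathcal F$.

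To gain the index $3s-n-1$ I would enlarge the independent set $\{f_A:A\in\mathcal F\}$ by all monomials $m_T=\prod_{i\in T}x_i$ with $T\subseteq[n-1]$ and $|T|\le 3s-n-2$ (there are exactly $\sum_{i=0}^{3s-n-2}\binom{n-1}{i}$ of these, and here $s\ge(n+2)/3$ guarantees $3s-n-2\ge 0$), and show that the enlarged family is still linearly independent in the same $\sum_{i=0}^{s}\binom{n-1}{i}$-dimensional space; comparing dimensions then yields $|\mathcal F|+\sum_{i=0}^{3s-n-2}\binom{n-1}{i}\le\sum_{i=0}^{s}\binom{n-1}{i}$, which is the theorem. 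To prove the independence one takes a relation $\sum_{A}\alpha_{A}f_{A}+\sum_{T}\beta_{T}m_{T}=0$; evaluating at $\chi_B$ for $B\in\mathcal F$ and using that $\mathcal F$ is an antichain expresses each $\alpha_B$ as an explicit linear combination of the $\beta_{T}$ with $T\subseteq B$, while evaluating at $\chi_S$ with $|S|\le 3s-n-2$ and exploiting the identity $f_A(\chi_S)=(-1)^{s}s!\cdot\mathbf{1}[S\subseteq A]$ — valid because $|S\setminus A|\le|S|\le 3s-n-2<s+1$, so the factor $(|S\setminus A|-|S\setminus A|)$ either occurs or $|S\setminus A|=0$ — collapses the relation into a system purely in the $\beta_T$, which I would solve by induction on $|T|$.

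The step I expect to be the main obstacle is precisely this last elimination. After substituting the $\alpha_B$, the coefficients of the system involve the up‑degrees $N(U)=\#\{A\in\mathcal F:A\supseteq U\}$ for $U=S\cup T$, so it is not automatically triangular; forcing all $\beta_T$ to vanish requires combining the antichain property, the bound $|A\setminus B|\le s$, and the numerical window $n/3<s\le n/2$ simultaneously, and in particular dealing separately with members of $\mathcal F$ whose size lies outside the interval $[\,3s-n-1,\ s\,]$ — such a set once again pins the remainder of $\mathcal F$ down tightly through the inequalities noted above. I anticipate that this bookkeeping, rather than any additional conceptual device, is where the bulk of the proof resides.
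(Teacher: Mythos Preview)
Your outline has the right architecture, but the final independence step is a genuine gap, not bookkeeping, and as written it is actually false. Take $n=6$, $s=3$, $\mathcal F=\{\{1,2\},\{1,3,4,5\}\}$: this is $[3]$-differencing Sperner, and your reduction does not apply since one set has size $2<n-s$ and the other has size $4>s$. Here $f_{\{1,3,4,5\}}(x)=\prod_{\ell=1}^{3}(x_2+x_6-\ell)$, whose multilinear reduction is $-6(1-x_2)(1-x_6)=6(x_6-1)\cdot 1-6(x_6-1)\cdot x_2$, a linear combination of two of the auxiliary polynomials $(x_n-1)m_U$ that the proof of Theorem~\ref{bchooses} adjoins to reach the $\sum_{i\le s}\binom{n-1}{i}$ bound. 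So the enlarged family is dependent before you even touch the $m_T$'s, and the up-degree system you hope to solve by induction on $|T|$ is singular in general. (This also shows that Theorem~\ref{bchooses} does not place the $f_A$ into a canonical subspace of dimension $\sum_{i\le s}\binom{n-1}{i}$; that bound is obtained by proving joint independence with the $(x_n-1)m_U$ inside the full $\sum_{i\le s}\binom{n}{i}$-dimensional space, so any further auxiliaries must be shown independent of those as well.)

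The paper supplies the two missing devices. First, a push-to-the-middle step (Corollary~\ref{cor: middle}) replaces $\mathcal F$ by a family of the same size, still $[s]$-differencing Sperner, in which \emph{every} member has size in $[s,n-s]$; your reduction, which only handles the all-small and all-large cases, is far too weak for this. Second, the extra polynomials are not raw monomials but $h_T(x)=Q(x)\prod_{j\in T}x_j$ with $Q(x)=\prod_{k=s-1}^{\,n-s}\bigl(\sum_{j\le n-1}x_j-k\bigr)$; after the first step, $Q$ vanishes at $\chi_A$ and at $\chi_{A\cup\{n\}}$ for every $A\in\mathcal F$, so evaluating a putative dependence at those points kills all $h_T$-terms outright. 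This decoupling lets the $p$-adic argument of Proposition~\ref{prop: p-adic} force all $\alpha_A=0$ first; setting $x_n=1$ then separates the remaining two auxiliary families, each of which is triangular on its own. The vanishing of the $h_T$ on $\mathcal F$ is exactly the idea your raw monomials $m_T$ lack.
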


Recently, Nagy and Patk\'{o}s~\cite{2021GC} introduced the notion of $L$-close Sperner systems. For a set $L$ of positive integers, a set system $\mathcal{F} \subseteq 2^{[n]}$ is said to be {\em $L$-close Sperner}, if for any pair of distinct sets $A,B$ in $\mathcal{F}$, the skew distance $sd(A,B)=\min\{|A \setminus B|,|B \setminus A|\} \in L$. Boros, Gurvich, and Milani\v c \cite{2019EJC,2020JGT} also introduced similar notions and their motivations are from computer science. Note that an $L$-differencing Sperner system is an $L$-close Sperner system, but not vice versa. Nagy and Patk\'{o}s~\cite{2021GC} proved the following upper bound on $L$-close Sperner systems:

\begin{thm}[Nagy/Patk\'{o}s]\label{NPmainthm}
Let $L$ be a set of $s$ positive integers. If $\mathcal{F}\subseteq 2^{[n]}$ is $L$-close Sperner, then we have
\begin{equation*}
  |\mathcal{F}| \leq \sum_{i=0}^{s} \binom{n}{i}.
\end{equation*}
\end{thm}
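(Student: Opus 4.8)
The plan is to prove Theorem~\ref{NPmainthm} by the standard linear-algebra (polynomial) method in the spirit of Ray-Chaudhuri--Wilson and Frankl, with one extra twist: order the family by set size so that the skew distance $sd(A,B)$ becomes an honest set difference depending affinely on a single characteristic vector, and hence a degree-$s$ (rather than degree-$2s$) condition.

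First I would identify each $A\in\mathcal{F}$ with its characteristic vector $v_A\in\{0,1\}^n$ and enumerate $\mathcal{F}=\{A_1,\dots,A_N\}$ so that $|A_1|\le|A_2|\le\cdots\le|A_N|$, breaking ties arbitrarily. The key observation is that for $i<j$ we have
\[
|A_i\setminus A_j|=|A_i|-|A_i\cap A_j|\le |A_j|-|A_i\cap A_j|=|A_j\setminus A_i|,
\]
so $sd(A_i,A_j)=|A_i\setminus A_j|\in L$; note this remains valid even when $|A_i|=|A_j|$, since then the two differences coincide and both equal $sd(A_i,A_j)$.

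Next, for each $i$ I would define $f_i\colon\{0,1\}^n\to\R$ by
\[
f_i(y)=\prod_{\ell\in L}\Bigl(|A_i|-\textstyle\sum_{k\in A_i}y_k-\ell\Bigr),
\]
a polynomial of degree at most $s=|L|$. Evaluating at characteristic vectors gives $f_i(v_{A_j})=\prod_{\ell\in L}\bigl(|A_i\setminus A_j|-\ell\bigr)$, hence $f_i(v_{A_j})=0$ for every $j>i$ because $|A_i\setminus A_j|=sd(A_i,A_j)\in L$, while $f_i(v_{A_i})=\prod_{\ell\in L}(-\ell)\neq 0$ since $0\notin L$. Thus the matrix $\bigl(f_i(v_{A_j})\bigr)_{i,j}$ is triangular with nonzero diagonal, which forces $f_1,\dots,f_N$ to be linearly independent: if $\sum_i c_if_i\equiv 0$ with $c_m\neq 0$ and $m$ maximal, then evaluating at $v_{A_m}$ kills every term except $c_mf_m(v_{A_m})$, a contradiction.

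Finally, since each $f_i$ is a polynomial of degree at most $s$, its restriction to $\{0,1\}^n$ lies in the span of the multilinear monomials $\prod_{k\in S}y_k$ with $|S|\le s$, a space of dimension $\sum_{i=0}^{s}\binom{n}{i}$. Linear independence then yields $N\le \sum_{i=0}^{s}\binom{n}{i}$, as claimed. The only real content is the size-ordering observation that turns the ``$\min$'' defining $sd$ into a single linear form and thus halves the degree; the rest is a routine triangular linear-independence argument, and the one point deserving an explicit comment is the handling of sets of equal size in the ordering.
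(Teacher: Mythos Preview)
Your proof is correct and is essentially the argument of Nagy and Patk\'{o}s that the paper cites; the paper does not give its own proof of Theorem~\ref{NPmainthm}, but in the proof of Theorem~\ref{thm: [s]} it invokes precisely the same construction and the same key observation (ordering by size so that $sd(A_i,A_j)=|A_i\setminus A_j|$), attributing it to \cite[Section~2]{2021GC}. The only cosmetic difference is that the paper orders the sets by decreasing size and obtains $p_j(v^{(i)})=0$ for $j>i$, whereas you order increasingly and obtain $f_i(v_{A_j})=0$ for $j>i$; both yield the same triangular linear-independence argument.
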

Moreover, when $|L|=1$, they showed that $|\mathcal{F}| \leq n$. They conjectured that if $L=[s]$ and $\mathcal{F}\subseteq 2^{[n]}$ is $L$-close Sperner, then $|\mathcal{F}|\leq \binom{n}{s}$, which is sharp by considering $\binom{[n]}{s}$. We make partial progress and prove the following theorem:

\begin{thm}\label{thm: [s]}
Let $L=[s]$ such that $(n+1)/3 \leq s \leq n/2$. Let $\mathcal{F} \subseteq 2^{[n]}$ be $L$-close Sperner. Then 
$$
|\mathcal{F}|\leq \sum_{i=3s-n}^s \binom{n}{i}.
$$
\end{thm}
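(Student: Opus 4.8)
The plan is to reduce the $L$-close Sperner problem with $L=[s]$ to a statement about the "middle layers" of the Boolean lattice, mirroring the structure of Theorem~\ref{[s] sym} but working with the weaker skew-distance hypothesis. First I would normalize: if $A,B\in\mathcal{F}$ are distinct and $sd(A,B)=\min\{|A\setminus B|,|B\setminus A|\}\in[s]$, then in particular neither of $A\setminus B$ nor $B\setminus A$ is empty, so $\mathcal{F}$ is a genuine Sperner system; moreover the symmetric difference satisfies $|A\triangle B|=|A\setminus B|+|B\setminus A|\geq 2\cdot sd(A,B)\geq 2$, and, crucially, $sd(A,B)\le s$ forces at least one of the two "one-sided" differences to be small. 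I would then split $\mathcal{F}$ according to set sizes. Write $\mathcal{F}_k=\mathcal{F}\cap\binom{[n]}{k}$. For two sets of the \emph{same} size $k$ we have $|A\setminus B|=|B\setminus A|$, so the skew-distance condition becomes $|A\setminus B|\in[s]$ exactly, i.e.\ $1\le|A\triangle B|/2\le s$; these "same-layer" pieces are $(\le s)$-intersecting-type families and can be controlled layer by layer.

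The heart of the argument is a counting/compression step showing that, in the regime $(n+1)/3\le s\le n/2$, the only layers that can contribute are those with $k$ in the window $[\,3s-n,\ s\,]$ of the relevant variable, after which $|\mathcal{F}|\le\sum_{k}|\mathcal{F}_k|\le\sum_{i=3s-n}^{s}\binom{n}{i}$ follows by bounding each nonempty layer trivially by $\binom{n}{i}$. To see why layers outside the window die: suppose $A\in\mathcal{F}$ with $|A|=k$ very small, say $k<3s-n$ (the symmetric case $|A|>n-(3s-n)=2n-3s$ near the top is dual under complementation, since $sd(A,B)=sd([n]\setminus A,[n]\setminus B)$). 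The idea is that a small set $A$ together with the $L$-close Sperner condition forces every other member $B$ of $\mathcal{F}$ to have $|B\setminus A|\le s$ (the other one-sided difference $|A\setminus B|\le|A|=k<s$ automatically when $A$ is small but we must check $B\not\supseteq A$), hence $B\subseteq A\cup S$ for some $S$ of size $\le s$ — no: rather $|B\setminus A|\le s$ gives $|B|\le|A\cap B|+s\le k+s$, and combined with the Sperner condition $A\not\subseteq B$, one extracts strong structural constraints. Running the antichain/shadow inequality (LYM-type or a direct injection from $\mathcal{F}$ into the layers of the window) then yields the bound.

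The main obstacle I anticipate is precisely the place where the skew-distance hypothesis is weaker than the differencing hypothesis: knowing only $\min\{|A\setminus B|,|B\setminus A|\}\le s$ (rather than both, or a fixed one, being in $[s]$) means one cannot directly say "$B$ lives in a ball of radius $s$ around $A$." To handle this I would partition the ordered pairs $(A,B)$ according to which of the two differences realizes the minimum, and argue that for a \emph{fixed} small $A$, the set of $B$'s with $|B\setminus A|\le s$ and the set of $B$'s with $|A\setminus B|\le s$ must each be small: the first by a direct volume bound ($B$ determined by $A\cap B\subseteq A$ and $B\setminus A$ of size $\le s$, giving at most $2^{|A|}\sum_{j\le s}\binom{n-|A|}{j}$ choices, which is too weak unless $|A|$ is tiny), the second by noting $|A\setminus B|\le s$ with $A$ of size just above $s$ forces $|A\cap B|\ge|A|-s$ to be large. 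Balancing these two cases is where the specific threshold $(n+1)/3\le s$ and the window endpoints $3s-n$ enter, and making the injection into $\bigcup_{i=3s-n}^{s}\binom{[n]}{i}$ genuinely well-defined (injective across layers, using Sperner-ness to avoid collisions) is the technical crux. I would expect the cleanest route is via a complementation symmetry plus a careful two-case shadow argument rather than a single polynomial-method inequality, since the hypothesis here is combinatorial rather than modular.
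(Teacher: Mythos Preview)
Your approach has a genuine gap: it never arrives at a mechanism that produces the precise count $\sum_{i=3s-n}^{s}\binom{n}{i}$. The window $[3s-n,s]$ in the statement is \emph{not} a range of layer indices in which the sets of $\mathcal{F}$ must live; after a push-to-the-middle reduction (Corollary~\ref{cor: middle}), the sets lie in layers $s,\ldots,n-s$, which is a different interval entirely. The right-hand side arises instead from a dimension count in the polynomial method: $\sum_{i=0}^{s}\binom{n}{i}$ is the dimension of the space of multilinear polynomials of degree $\le s$ in $n$ variables, and $\sum_{i=0}^{3s-n-1}\binom{n}{i}$ is the number of auxiliary polynomials one can pack into that space alongside the $|\mathcal{F}|$ polynomials attached to $\mathcal{F}$. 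Your volume/shadow sketch, which you already flag as ``too weak,'' does not produce this exact subtraction, and the injection you propose into $\bigcup_{i=3s-n}^{s}\binom{[n]}{i}$ is never constructed.

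The missing idea is this: sort $\mathcal{F}=\{A_1,\ldots,A_m\}$ so that $|A_1|\ge\cdots\ge|A_m|$. Then for $j>i$ one has $|A_j|\le|A_i|$, hence $sd(A_j,A_i)=|A_j\setminus A_i|\in[s]$; so the multilinear reduction $p_j$ of $\prod_{\ell=1}^{s}\bigl(|A_j|-v^{(j)}\cdot x-\ell\bigr)$ satisfies $p_j(v^{(i)})=0$ for $j>i$ and $p_i(v^{(i)})\ne0$. This triangular system gives linear independence of the $p_i$, and because (after push-to-the-middle) every $|A_i|\in[s,n-s]$, one can adjoin $\sum_{i=0}^{3s-n-1}\binom{n}{i}$ further independent multilinear polynomials of degree $\le s$ built from the factor $Q(x)=\prod_{k=s}^{n-s}\bigl(\sum_j x_j-k\bigr)$, which vanishes on every $v^{(i)}$. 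The bound then drops out of the dimension inequality. Ordering by size is exactly what converts the awkward two-sided $\min$ in the skew distance into a one-sided difference amenable to the polynomial method; your case split on ``which side realizes the minimum'' is the right instinct, but it needs to be organized globally via this total order rather than pair by pair.
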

In particular, when $n$ is even and $s=n/2$, the above theorem is sharp by Sperner's theorem.

\subsection*{Structure of the paper.}
In Section~\ref{sec: background}, we provide additional background and put our main theorems in context. In Section~\ref{sec:preliminary}, we introduce some useful tools and prove some preliminary results. In Section~\ref{sec: q-modular}, we prove Theorem~\ref{bchooses}, Theorem~\ref{thm: AP}, and Theorem~\ref{thm: closure}. In Section~\ref{sec: push}, we prove Theorem~\ref{[s] sym} and Theorem~\ref{thm: [s]}. Finally, in Section~\ref{sec: Hamming}, we apply our main results to deduce new bounds on intersecting systems and explain how our results extend to the setting of prescribed Hamming distances.

\section{Background and overview of the paper}\label{sec: background}
In this section, we survey some important results in the study of $L$-intersecting systems and compare these results with our main results. In particular, we will review the techniques used in the seminal paper \cite{01JCTA} for $q$-modular $L$-avoiding $L$-intersecting systems and state their analogues in the setting of $q$-modular $L$-differencing Sperner systems (to be proved in later sections).

\subsection{Non-modular and modular versions}

$L$-intersecting systems were first studied by Ray-Chaudhuri and Wilson~\cite{RCW75}. One particular celebrated result in this setting that resembles Theorem~\ref{LL} is the following theorem, due to Snevily \cite{03Combinatorica}.

\begin{thm} [Snevily] \label{S03}
Let $L$ be a set of $s$ positive integers. If $\mathcal{F}\subseteq 2^{[n]}$ is an $L$-intersecting system, then
\begin{equation}\label{form}
  |\mathcal{F}| \leq \sum_{i=0}^{s} \binom{n-1}{i}.
\end{equation}
\end{thm}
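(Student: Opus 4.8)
The plan is to prove this — Snevily's theorem — by the linear-algebra (polynomial) method of Ray--Chaudhuri--Wilson, sharpened by Snevily's device for exploiting the hypothesis $0 \notin L$. Write $L = \{\ell_1 < \cdots < \ell_s\}$ with $\ell_1 \geq 1$, assume $|\mathcal{F}| \geq 2$ (otherwise there is nothing to prove), and order $\mathcal{F} = \{A_1, \ldots, A_m\}$ so that $|A_1| \leq \cdots \leq |A_m|$. For $A \in \mathcal{F}$ let $\mathbf{v}_A \in \{0,1\}^n$ be its characteristic vector and set $L_A = \{\ell \in L : \ell < |A|\}$. First I would record that the inclusion polynomials $f_i(\mathbf{x}) = \prod_{\ell \in L_{A_i}} (\langle \mathbf{v}_{A_i}, \mathbf{x}\rangle - \ell)$ behave triangularly along this order: $f_i(\mathbf{v}_{A_i}) = \prod_{\ell \in L_{A_i}}(|A_i| - \ell) \neq 0$, while for $j < i$ one has $|A_i \cap A_j| \in L$ and $|A_i \cap A_j| < |A_i|$ (since $|A_j| \leq |A_i|$ and $A_i \neq A_j$ force $A_i \not\subseteq A_j$), so some factor of $f_i(\mathbf{v}_{A_j})$ vanishes. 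Reducing modulo $x_k^2 = x_k$, the $f_i$ become linearly independent (as functions on $\{0,1\}^n$) multilinear polynomials of degree at most $s$, which already yields the Ray--Chaudhuri--Wilson bound $m \leq \sum_{i=0}^s \binom{n}{i}$.

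To improve this to $\sum_{i=0}^s \binom{n-1}{i}$ I would follow Snevily's approach. Since $0 \notin L$, one may adjoin a fresh ground-set element $n+1$ and replace each $A$ by $A \cup \{n+1\}$, so that every member of the family carries a common point; appending the corresponding coordinate (which equals $1$ on all relevant characteristic vectors) lets one rewrite each factor $\langle \mathbf{v}_A, \mathbf{x}\rangle - \ell$ on the point set in play as the genuine linear form $\sum_{k \in A} x_k - \ell\, x_{n+1}$. One then homogenizes all the $f_i$ to the common degree $s$ and works in the space of homogeneous degree-$s$ polynomials in $x_1, \ldots, x_{n+1}$ modulo the homogeneous relations $x_k^2 = x_k x_{n+1}$, a space of dimension $\sum_{i=0}^s \binom{n}{i}$. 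The decisive step is to adjoin to the (still independent) homogenized inclusion polynomials a suitable collection of $\sum_{i=0}^{s-1}\binom{n-1}{i}$ further monomials of lower degree — concretely, (homogenizations of) the monomials $x_T$ with $T$ avoiding a fixed ground-set element and $|T| \leq s-1$ — and to verify that the enlarged family is still linearly independent; feeding this into the dimension count and using the identity $\sum_{i=0}^s \binom{n}{i} - \sum_{i=0}^{s-1}\binom{n-1}{i} = \sum_{i=0}^s \binom{n-1}{i}$ then delivers the bound.

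The triangularity and the dimension bookkeeping are routine; the crux — and the only place where the positivity of $L$ is used in an essential way — is the joint linear independence of the inclusion polynomials together with the adjoined monomials. (That positivity is indispensable: the bound fails when $0 \in L$; for instance, a family of $n$ singletons together with $\emptyset$ is $\{0\}$-intersecting of size $n+1 > \sum_{i=0}^1 \binom{n-1}{i}$.) The difficulty is that substituting a single characteristic vector into a hypothetical relation $\sum_i c_i f_i + \sum_T b_T x_T \equiv 0$ does not separate the coefficients $c_i$ from the $b_T$; one has to peel off homogeneous components and descend in degree, exploiting the common-point normalization so that the adjoined monomials cannot interfere with the leading behaviour of the $f_i$. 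I expect getting this joint-independence argument to work to be the main obstacle; everything else is formal.
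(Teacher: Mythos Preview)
The paper does not prove Theorem~\ref{S03}; it is quoted as a known result of Snevily with a citation to~\cite{03Combinatorica}, so there is no in-paper proof to compare your proposal against.

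On the proposal itself: the first paragraph is a correct and complete derivation of the Ray--Chaudhuri--Wilson bound $\sum_{i=0}^s\binom{n}{i}$. The second and third paragraphs correctly identify Snevily's overall strategy (homogenize via a common dummy element, work in the degree-$s$ space modulo $x_k^2=x_kx_{n+1}$, adjoin $\sum_{i=0}^{s-1}\binom{n-1}{i}$ extra monomials), and your dimension arithmetic is right. But you explicitly flag the joint linear independence of the inclusion polynomials with the adjoined monomials as an unresolved ``obstacle,'' and you do not supply that argument. So what you have is a plan, not a proof; the step you have left open is precisely the one that carries the content of Snevily's improvement over Ray--Chaudhuri--Wilson. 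Your counterexample showing that the hypothesis $0\notin L$ is essential is correct.
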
 

\begin{rem}\label{motivation}
This is the best-known upper bound on $L$-intersecting systems (without additional assumptions on $\mathcal{F}$). Lots of efforts have been made to achieve an upper bound of the same form as \eqref{form} in different variants of extremal set problems; see for example \cite{2007GC,LL09,XLZ18}. In particular, for a $p$-modular $L$-avoiding $L$-intersecting system $\mathcal{F}$ with $|L|=s$, the upper bound \eqref{form} holds (see for example \cite[Theorem 5]{2007GC}). This serves as our main motivation for improving Theorem~\ref{XL} to Theorem~\ref{bchooses}: Theorem~\ref{bchooses} seems to be the first instance where an upper bound of the same form as~\eqref{form} appears in the $q$-modular setting. Other new results in the paper are of a similar flavor. 
\end{rem}

The $p$-modular (and the $q$-modular) $L$-intersecting systems were first studied by Frankl and Wilson \cite{FW81}. We refer to the survey \cite{LY14} by Liu and Yang for related results. The modular version (both for $L$-intersecting systems and $L$-differencing Sperner systems) is interesting and useful if $L$ has some special arithmetic properties (for example, $L$ is contained in the union of a few arithmetic progressions with the same modulus), in which case the upper bound given by the modular version tends to improve the upper bound given in the non-modular version significantly. We illustrate this philosophy in the following example by comparing Theorem~\ref{bchooses} with Theorem~\ref{LL}.

\begin{ex}
Let $n$ be sufficiently large. Let $L$ be the set of primes up to $n$ (together with $1$). Let $\mathcal{F}\subseteq 2^{[n]}$ be an $L$-differencing Sperner system. Note that for any prime $p<n$, Theorem~\ref{LL} does not apply since $p \in L$. Thus, Theorem~\ref{LL} only gives an upper bound on $|\mathcal{F}|$ of order $\binom{n}{\pi(n)+1}$, where $\pi(n)=(1+o(1))\frac{n}{\log n}$ by the prime number theorem. However, taking $L=\{1,2,3\} \pmod 4$, then Theorem~\ref{bchooses} gives 
$$
|\mathcal{F}| \leq \binom{n-1}{3}+\binom{n-1}{2}+\binom{n-1}{1}+\binom{n-1}{0}=\binom{n}{3}+n,
$$
which improves the upper bound given by Theorem~\ref{LL} exponentially. It is interesting to see if the trivial lower bound $\binom{n}{3}$ (given by the construction $\binom{[n]}{3}$) can be improved.
\end{ex}

The $q$-modular version is also useful for various applications. For example, Frankl and Wilson \cite{FW81} derived upper bounds on uniform $q$-modular $L$-intersecting systems and obtained improved lower bounds on the chromatic number of the unit distance graph in $\R^n$ as well as the constructive lower bound for the Ramsey problem. 

\subsection{$p$-adically separating polynomials and $q$-modular $L$-intersecting systems}
For $p$-modular or non-modular results, only the linear algebra methods are required. The only difference is the underlying field used: we work over the field $\F_p$ for the $p$-modular version, while we work over the field $\Q$ for the non-modular version. If $q$ is a prime power, an appropriate underlying field is not available and thus extra efforts are required to obtain $q$-modular results. In particular, one important contribution, due to Babai, Frankl, Kutin, and \v{S}tefankovi\v{c} \cite{01JCTA}, is to convert the problem to finding upper bounds on the degree of $p$-adically separating polynomials. We survey their main results and techniques in this section.

We follow the definitions in \cite[Section 2]{01JCTA} for separating polynomials:
\begin{itemize}
    \item Given a set $L \subseteq \Z$ and an element $\alpha \not \in L$, we say that a univariate polynomial $g \in \Z[y]$ ($p$-adically) \emph{separates $\alpha$ from $L$} if $v_p(g(\alpha))<v_p(g(\ell))$ for each $\ell \in L$.
    \item Let $D(L, \alpha, q)$ denote the minimum possible degree of a polynomial separating $\alpha$ from $L+q\Z$.
    \item Let $D(s, k)$ be the maximum value of $D\left(L, \alpha, p^k\right)$, taken over all primes $p$, all $L \subseteq \{0,1,\ldots, p^k-1\}$ of size $|L|=s$, and all $\alpha \notin L \pmod {p^k}$.
\end{itemize}

The following lemma can be proved by combining the linear algebra methods and a simple $p$-adic argument.

\begin{lem}[{\cite[Lemma 3.1]{01JCTA}}]\label{01}
Let $L \subseteq \{0,1,\ldots, q-1\}$. Assume that for each $\alpha \not \in L \pmod q$, there exists a degree-$d$ univariate polynomial $g_{\alpha}$ separating $\alpha$ from $L+q\Z$. If $\mathcal{F} \subseteq 2^{[n]}$ is a $q$-modular $L$-avoiding $L$-intersecting system, then  
$$|\mathcal{F}|\leq \sum_{i=0}^{d}\binom{n}{i}.$$
\end{lem}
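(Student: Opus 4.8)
The plan is to follow the standard linear-algebra (polynomial method) framework, adapting the classical argument of Ray--Chaudhuri--Wilson and Frankl--Wilson to the $p$-adic setting, which is precisely the content of \cite[Lemma 3.1]{01JCTA}. First I would associate to each set $A \in \mathcal{F}$ its characteristic vector $\mathbf{v}_A \in \{0,1\}^n$, so that $\langle \mathbf{v}_A, \mathbf{v}_B\rangle = |A \cap B|$. To each $A$ I attach the multivariate polynomial $f_A(\mathbf{x}) = g_{|A|}\big(\langle \mathbf{v}_A, \mathbf{x}\rangle\big) \in \Z[x_1,\dots,x_n]$, where $g_{|A|}$ is the degree-$d$ polynomial that separates $|A|$ from $L + q\Z$ (such a polynomial exists for every $A \in \mathcal{F}$ since $\mathcal{F}$ is $L$-avoiding, i.e.\ $|A| \notin L \pmod q$). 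Evaluating on characteristic vectors, one has $f_A(\mathbf{v}_A) = g_{|A|}(|A|)$ and $f_A(\mathbf{v}_B) = g_{|A|}(|A \cap B|)$ for $B \neq A$; since $\mathcal{F}$ is $L$-intersecting, $|A \cap B| \in L \pmod q$, so by the separation property $v_p\big(f_A(\mathbf{v}_A)\big) < v_p\big(f_A(\mathbf{v}_B)\big)$ for all $B \neq A$.

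Next I would pass to a suitable coefficient ring to extract linear independence. Multilinearly reducing each $f_A$ (replacing $x_i^2$ by $x_i$, which does not change values on $\{0,1\}^n$) keeps the degree at most $d$, so each reduced polynomial $\tilde f_A$ lies in the $\Z$-span of the squarefree monomials of degree $\le d$, a free module of rank $\sum_{i=0}^d \binom{n}{i}$. The claim is that the $\tilde f_A$ are linearly independent over $\Q_p$ (equivalently, their images are independent after a suitable scaling). Suppose $\sum_{A} c_A \tilde f_A = 0$ with $c_A \in \Q_p$ not all zero. Normalize so that $\min_A v_p(c_A) = 0$, and pick $A_0$ with $v_p(c_{A_0}) = 0$. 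Evaluate the relation at $\mathbf{v}_{A_0}$: the $A_0$-term has valuation $v_p(c_{A_0}) + v_p(f_{A_0}(\mathbf{v}_{A_0})) = v_p(g_{|A_0|}(|A_0|))$, while every other term $c_A f_A(\mathbf{v}_{A_0})$ has valuation $v_p(c_A) + v_p(f_A(\mathbf{v}_{A_0})) \ge 0 + v_p(g_{|A|}(|A \cap A_0|)) > v_p(g_{|A|}(|A|))$ --- wait, I need the valuation of the $A_0$ diagonal term to be strictly smaller than that of every off-diagonal term $c_A f_A(\mathbf v_{A_0})$ for $A \ne A_0$; this is where one must be slightly careful and instead normalize the polynomials $g_\alpha$ so the separation gives a uniform comparison, or equivalently choose the triangular ordering correctly. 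The clean way is: scale each $f_A$ so that $v_p(f_A(\mathbf v_A)) = 0$ (divide by $p^{v_p(g_{|A|}(|A|))}$); then $v_p(f_A(\mathbf v_A)) = 0$ and $v_p(f_A(\mathbf v_B)) \ge 1$ for $B \ne A$. Now the matrix $M = \big(f_A(\mathbf v_B)\big)_{A,B \in \mathcal{F}}$ is congruent to the identity mod $p$, hence invertible over $\Z_p$ (equivalently $\Q_p$), which forces the $\tilde f_A$ to be linearly independent over $\Q_p$.

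Finally, linear independence of the $|\mathcal{F}|$ polynomials $\tilde f_A$ inside the $\Q_p$-vector space spanned by squarefree monomials of degree at most $d$ gives $|\mathcal{F}| \le \dim = \sum_{i=0}^d \binom{n}{i}$, as desired. I expect the main obstacle --- really the only subtle point --- to be the valuation bookkeeping in the independence argument: one must ensure that after rescaling each $f_A$ by the appropriate power of $p$ so that the diagonal entries are $p$-adic units, the off-diagonal entries genuinely become divisible by $p$, which uses exactly the strict inequality $v_p(g_\alpha(\alpha)) < v_p(g_\alpha(\ell))$ in the definition of separation (and the fact that the $g_\alpha$ have integer coefficients, so all valuations in sight are nonnegative integers). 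Once the matrix $M$ is shown to be $\equiv I \pmod p$ and thus nonsingular over $\Z_p$, everything else is the routine dimension count.
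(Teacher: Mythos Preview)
Your argument is correct and follows the standard approach of \cite[Lemma~3.1]{01JCTA}; the paper does not give its own proof of this cited lemma but describes it as ``combining linear algebra methods and a simple $p$-adic argument,'' which is exactly what you do. Your self-identified subtlety---that the diagonal valuations $v_p(g_{|A|}(|A|))$ may differ across $A$, so one must first rescale each $f_A$ to make the matrix $M\equiv I\pmod p$---is the genuine point of the argument, and your resolution is the right one. (The paper's proof of the closely related Proposition~\ref{prop: p-adic} avoids this rescaling only because a \emph{single} separating polynomial $g$ is used there, so every diagonal entry equals $g(0)$.)
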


We will prove the following proposition in Section~\ref{4.1}. It can be regarded as a refinement of Lemma~\ref{01} in our Sperner setting. In particular, some new ingredients and extra efforts are required to deduce the stronger upper bound~\eqref{n-1i}.

\begin{prop}\label{prop: p-adic}
Let $q$ be a power of a prime $p$ and let $L \subseteq [q-1]$. Let $\mathcal{F}\subseteq 2^{[n]}$ be a $q$-modular $L$-differencing Sperner system. Assume that there exists a degree-$d$ univariate polynomial $g$ separating $0$ from $L+q\Z$, that is, $v_p(g(0))<v_p(g(u))$ for each $u \in L+q\Z$, then we have \begin{equation*}
  |\mathcal{F}| \leq \sum_{i=0}^{d} \binom{n}{i}.
\end{equation*}
If in addition $v_p(g(0))\leq v_p(g(u-1))$ for each $u \in L+q\Z$, or $v_p(g(0))\leq v_p(g(u+1))$ for each $u \in L+q\Z$, then the following improved upper bound holds:
\begin{equation}\label{n-1i}
  |\mathcal{F}| \leq \sum_{i=0}^{d} \binom{n-1}{i}.
\end{equation}
\end{prop}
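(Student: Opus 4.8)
The plan is to run a $p$-adic linear-algebra argument in the spirit of Lemma~\ref{01}, and then to squeeze out the sharper bound~\eqref{n-1i} by passing to a distinguished coordinate.

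For the first inequality, attach to each $A\in\mathcal F$ the polynomial $f_A(\mathbf x)=g\big(|A|-\langle\mathbf x_A,\mathbf x\rangle\big)\in\Z[x_1,\dots,x_n]$, where $\mathbf x_A$ is the indicator vector of $A$, so that $f_A(\mathbf x_B)=g(|A\setminus B|)$ for every $B\in\mathcal F$. On the diagonal this is $g(0)$, and for $A\neq B$ we have $|A\setminus B|\in L+q\Z$ (note $0\notin L+q\Z$, so $\mathcal F$ is automatically an antichain and no avoidance hypothesis is needed), whence $v_p(f_A(\mathbf x_B))>v_p(g(0))$. Hence the matrix $\big(f_A(\mathbf x_B)\big)_{A,B\in\mathcal F}$, divided by $p^{v_p(g(0))}$, has entries in $\Z_p$ and reduces modulo $p$ to a diagonal matrix with unit diagonal; it is therefore invertible over $\Q_p$, so the functions $f_A$ restricted to $\{0,1\}^n$ are linearly independent over $\Q$. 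Each such function agrees with a multilinear polynomial of degree at most $\deg g=d$, and those span a space of dimension $\sum_{i=0}^d\binom ni$; the first bound follows.

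For~\eqref{n-1i}, assume first that $v_p(g(0))\le v_p(g(u-1))$ for all $u\in L+q\Z$. I would single out the coordinate $n$ and, for each $A\in\mathcal F$, put $A'=A\cap[n-1]$ and $\Phi_A(\mathbf x)=g\big(|A'|-\langle\mathbf x_{A'},\mathbf x\rangle\big)$, a polynomial in the $n-1$ variables $x_1,\dots,x_{n-1}$ of degree at most $d$. Since $\mathcal F$ is an antichain, the indicator vectors $\mathbf x_{B'}$ with $B'=B\cap[n-1]$, $B\in\mathcal F$, are pairwise distinct, and one computes $\Phi_A(\mathbf x_{B'})=g\big(|A\setminus B|-\varepsilon_{A,B}\big)$, where $\varepsilon_{A,B}=1$ if $n\in A$ and $n\notin B$, and $\varepsilon_{A,B}=0$ otherwise; in particular every diagonal entry is $g(0)$. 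The crucial observation is that each off-diagonal entry is $g(u)$ or $g(u-1)$ for some $u\in L+q\Z$, so its $p$-adic valuation is at least $v_p(g(0))$ --- by the separation property for the $g(u)$-entries, and precisely by the one-sided hypothesis for the $g(u-1)$-entries --- and that the $g(u-1)$-entries occur only in rows indexed by sets containing $n$ and columns indexed by sets avoiding $n$. Ordering the members of $\mathcal F$ that avoid $n$ before those that contain $n$, the evaluation matrix divided by $p^{v_p(g(0))}$ thus reduces modulo $p$ to a block lower-triangular matrix with invertible diagonal blocks, hence is invertible; so the $\Phi_A$ are linearly independent over $\Q$, and as they all lie in the space of multilinear polynomials of degree $\le d$ in $x_1,\dots,x_{n-1}$, of dimension $\sum_{i=0}^d\binom{n-1}{i}$, we obtain~\eqref{n-1i}. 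The case $v_p(g(0))\le v_p(g(u+1))$ should follow by the mirror construction (equivalently, after adjoining $n$ to every member of $\mathcal F$, or by passing to complements).

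The routine part should be the bookkeeping in the last paragraph --- checking that the rescaled evaluation matrix is $p$-integral and that its reduction modulo $p$ really has the claimed block-triangular shape. What I would flag as the crux is that the one-sided hypothesis is exactly what makes this work: with no control on $v_p(g(u-1))$ for $u\in L+q\Z$, an off-diagonal entry could have valuation strictly below $v_p(g(0))$ and the reduction would collapse. So the main obstacle is less a hard estimate than recognizing the right substitution $A\rightsquigarrow A\cap[n-1]$ and verifying that it meshes with the $p$-adic valuations.
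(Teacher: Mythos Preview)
Your first inequality is correct and is exactly the paper's argument, rephrased matricially rather than as a linear-dependence contradiction.

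For the improved bound under the $u-1$ hypothesis, your argument is correct and takes a genuinely different route from the paper. The paper keeps the polynomials $p_i$ in $n$ variables and \emph{adds} auxiliary functions $f_j(x)=(x_n-1)\prod_{k\in B_j}x_k$ indexed by small subsets $B_j\subseteq[n-1]$, then runs a two-stage $p$-adic elimination (first forcing $p\mid\alpha_k$ for $n\in A_k$ via $x=v^{(k)}$, then for $n\notin A_k$ via $x=\mathbf{x}_{A_k\cup\{n\}}$); counting dimensions yields $m\le\sum_i\binom{n}{i}-\sum_i\binom{n-1}{i}=\sum_i\binom{n-1}{i}$. Your approach is cleaner: by passing directly to $A'=A\cap[n-1]$ you land in the multilinear space on $n-1$ variables from the outset, and the single observation that the rescaled evaluation matrix is block lower-triangular mod $p$ (with unit diagonal) replaces the paper's iterated substitutions and auxiliary family. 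What the paper's route buys is that the same polynomials $p_i$ and $f_j$ are reused verbatim in the proof of Theorem~\ref{[s] sym}, where a third family $h_k$ is adjoined; your restricted polynomials $\Phi_A$ could likely be adapted, but the paper's formulation plugs in more directly there.

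There is, however, a genuine gap in your treatment of the $u+1$ case. Neither of your proposed mirrors works. Passing to complements sends $\mathcal F$ to another $q$-modular $L$-differencing Sperner system $\mathcal G=\{[n]\setminus A:A\in\mathcal F\}$, but when you run your construction on $\mathcal G$ the off-block entries are $g(|C\setminus D|-1)=g(|B\setminus A|-1)$, so you again need the $u-1$ hypothesis, not $u+1$. ``Adjoining $n$ to every member'' is not well-defined (some members already contain $n$), and the obvious interpretation---embedding into $2^{[n+1]}$ via $A\mapsto A\cup\{n+1\}$---collapses your $\varepsilon_{A,B}$ to zero and only recovers the weaker $\sum_i\binom{n}{i}$ bound. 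More generally, any variant of your $\Phi_A(\mathbf x)=g(|A'|-\langle\mathbf x_{A'},\mathbf x\rangle)$ evaluated at restricted indicator vectors produces shifts of the form $|A\setminus B|-\varepsilon$ with $\varepsilon\in\{0,1\}$, never $|A\setminus B|+\varepsilon$; the ``$+1$'' does not arise naturally from intersecting with $[n-1]$. The paper handles this case by switching the auxiliary family to $\widetilde{f_j}=x_n\prod_{k\in B_j}x_k$ (vanishing when $x_n=0$ rather than $x_n=1$) and reversing the order of the two elimination stages. You would need an analogous asymmetric modification, not a symmetry of the family $\mathcal F$ itself.
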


In view of the above two results, we are led to study the upper bounds on the degree of separating polynomials. In \cite[Lemma 5.1]{01JCTA}, Babai, Frankl, Kutin, and \v{S}tefankovi\v{c} proved that
\begin{equation}\label{D(s,k)}
D(s,k) \leq \min \bigg\{2^{s-1}, \bigg(1+\frac{s-1}{k}\bigg)^k \bigg\}.    
\end{equation}
For different ranges of $s$ and $k$, the upper bound on $D(s,k)$ can be improved; see \cite[Section 7]{01JCTA} and \cite{KZ08}.
Combing Lemma~\ref{01} and inequality~\eqref{D(s,k)}, they concluded the following \cite[Theorem 1.2]{01JCTA}:

\begin{thm}[Babai/Frankl/Kutin/\v{S}tefankovi\v{c}]\label{2^sss}
Let $q=p^k$ and let $L \subseteq \{0,1,\ldots, q-1\}$ of size $s$. Let $\mathcal{F} \subseteq 2^{[n]}$ be a $q$-modular $L$-avoiding $L$-intersecting system of sets. Then
\begin{equation*}
  |\mathcal{F}| \leq \sum_{i=0}^{D(s,k)} \binom{n}{i} \leq \sum_{i=0}^{2^{s-1}} \binom{n}{i}.
\end{equation*}
\end{thm}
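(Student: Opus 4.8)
The plan is to combine the linear-algebra bound of Lemma~\ref{01} with the degree estimate~\eqref{D(s,k)} for $p$-adically separating polynomials, exactly as in~\cite{01JCTA}. The point is that $D(s,k)$ is defined precisely so that Lemma~\ref{01} can be fed a uniform degree: being a maximum of the quantities $D(L,\alpha,p^k)$ over all primes $p$, all $L\subseteq\{0,1,\ldots,p^k-1\}$ of size $s$, and all $\alpha\notin L\pmod{p^k}$, it dominates $D(L,\alpha,p^k)$ for the specific $L$ and $\alpha$ arising in our problem. The only thing to verify is that the degree $D(s,k)$ is finite, and this is guaranteed by~\eqref{D(s,k)}, whose right-hand side is finite (its proof in \cite[Lemma 5.1]{01JCTA} in fact exhibits explicit separating polynomials).

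First I would fix the family $\mathcal{F}\subseteq 2^{[n]}$ together with the set $L$ from the statement, where $q=p^k$ and $|L|=s$, and set $d=D(s,k)$. For each $\alpha\in\{0,1,\ldots,q-1\}$ with $\alpha\notin L\pmod q$ we have $D(L,\alpha,q)\le D(s,k)=d$ by the definition of $D(s,k)$, so there is a univariate polynomial $g_\alpha\in\Z[y]$ of degree at most $d$ separating $\alpha$ from $L+q\Z$, i.e.\ $v_p(g_\alpha(\alpha))<v_p(g_\alpha(u))$ for every $u\in L+q\Z$. Having degree \emph{at most} $d$ is all that the proof of Lemma~\ref{01} requires, since the underlying linear-algebra argument counts the multilinear monomials of degree $\le d$ in $n$ variables, of which there are $\sum_{i=0}^{d}\binom{n}{i}$.

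Next I would invoke Lemma~\ref{01} with this value of $d$: since $\mathcal{F}$ is $q$-modular $L$-avoiding and $L$-intersecting, and a separating polynomial of degree $\le d$ exists for every admissible $\alpha$, we obtain $|\mathcal{F}|\le\sum_{i=0}^{d}\binom{n}{i}=\sum_{i=0}^{D(s,k)}\binom{n}{i}$, which is the first asserted inequality. The second inequality is immediate: the map $d\mapsto\sum_{i=0}^{d}\binom{n}{i}$ is non-decreasing, and~\eqref{D(s,k)} gives $D(s,k)\le\min\{2^{s-1},(1+(s-1)/k)^k\}\le 2^{s-1}$, so $\sum_{i=0}^{D(s,k)}\binom{n}{i}\le\sum_{i=0}^{2^{s-1}}\binom{n}{i}$.

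As for obstacles, there is essentially none in this deduction, since the two hard ingredients---the algebraic bound (Lemma~\ref{01}) and the combinatorial construction of low-degree $p$-adically separating polynomials (the bound~\eqref{D(s,k)})---are already available. The one subtlety worth flagging is the need for a single degree bound $d$ that works for \emph{every} excluded residue $\alpha$ simultaneously; this is handled automatically by taking $D(s,k)$ to be the relevant maximum over all configurations. The genuine mathematical content of the whole circle of ideas lives entirely in proving~\eqref{D(s,k)}, which we are treating here as a black box.
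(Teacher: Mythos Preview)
Your proposal is correct and follows exactly the approach indicated in the paper: the theorem is deduced immediately by combining Lemma~\ref{01} with the degree bound~\eqref{D(s,k)}, using that $D(s,k)$ majorizes $D(L,\alpha,q)$ for every relevant $\alpha$. Your observation that degree \emph{at most} $d$ suffices in Lemma~\ref{01} is the only point requiring care, and you handle it correctly.
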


\begin{rem}\label{smallq}
They even showed that the upper bound $2^{s-1}$ for $D(s,k)$ is sharp in \cite[Theorem 6.3]{01JCTA}; however, the proof relies on converting the estimation of $D(s,k)$ to an equivalent optimization problem \cite[Theorem 6.1]{01JCTA}, where it is implicitly assumed that $p>2^s$ \cite[Lemma 6.3]{01JCTA}. Thus, if $q<2^s$, it is likely that Theorem~\ref{2^sss} can be improved and it makes perfect sense if the upper bound can be improved from $O(n^{2^{s-1}})$ to $O(n^{C(q,s)})$ for some polynomial $C$ depending on both $q$ and $s$; we confirm this in Section~\ref{sec:intersecting}.
\end{rem}

Next, we show that the same upper bound holds for all $q$-modular $L$-differencing Sperner systems.

\begin{thm}\label{2^s}
Let $q$ be a prime power and let $L \subseteq [q-1]$ of size $s$. Let $\mathcal{F} \subseteq 2^{[n]}$ be a $q$-modular $L$-differencing Sperner system. Then
\begin{equation*}
  |\mathcal{F}| \leq \sum_{i=0}^{2^{s-1}} \binom{n}{i}.
\end{equation*}
\end{thm}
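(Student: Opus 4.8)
The plan is to combine Proposition~\ref{prop: p-adic} with the degree bound~\eqref{D(s,k)} on separating polynomials, exactly mirroring how Theorem~\ref{2^sss} follows from Lemma~\ref{01} and~\eqref{D(s,k)}. First I would fix the prime power $q=p^k$ and the set $L\subseteq[q-1]$ with $|L|=s$. Since $\mathcal{F}$ is a $q$-modular $L$-differencing Sperner system, and because $0\notin L$ (as $L\subseteq[q-1]\subseteq[q-1]$ consists of positive residues, so $0\not\equiv\ell\pmod q$ for any $\ell\in L$), the element $\alpha=0$ satisfies $\alpha\notin L\pmod q$. Hence $D(L,0,q)$ is well-defined, and by the definition of $D(s,k)$ together with~\eqref{D(s,k)} we have $D(L,0,q)\le D(s,k)\le 2^{s-1}$. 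This means there is a univariate polynomial $g\in\Z[y]$ of degree $d\le 2^{s-1}$ that $p$-adically separates $0$ from $L+q\Z$, i.e. $v_p(g(0))<v_p(g(u))$ for every $u\in L+q\Z$.

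Next I would feed this polynomial $g$ into Proposition~\ref{prop: p-adic}. That proposition, applied with this $g$ of degree $d$, immediately yields $|\mathcal{F}|\le\sum_{i=0}^{d}\binom{n}{i}$. Since $d\le 2^{s-1}$ and the function $N\mapsto\sum_{i=0}^{N}\binom{n}{i}$ is nondecreasing in $N$, we conclude
\begin{equation*}
|\mathcal{F}|\le\sum_{i=0}^{d}\binom{n}{i}\le\sum_{i=0}^{2^{s-1}}\binom{n}{i},
\end{equation*}
which is the claimed bound. I would not attempt to invoke the stronger $\binom{n-1}{i}$ version of Proposition~\ref{prop: p-adic} here, since the extra hypothesis (that $v_p(g(0))\le v_p(g(u\pm1))$ for all $u\in L+q\Z$) need not hold for an arbitrary separating polynomial of minimal degree; getting that refinement for general $L$ is precisely what the more delicate results like Theorem~\ref{bchooses} and Theorem~\ref{thm: closure} are for.

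I expect this proof to be essentially a short deduction once Proposition~\ref{prop: p-adic} is in hand, so the "main obstacle" is not really in this theorem itself but is already absorbed into Proposition~\ref{prop: p-adic} (whose proof refines Lemma~\ref{01} to the Sperner setting) and into the construction of separating polynomials underlying~\eqref{D(s,k)}. The only point that requires a moment's care is confirming that $\alpha=0$ is a legitimate choice, i.e. that $0\notin L\pmod q$; this is automatic from $L\subseteq[q-1]$. One could alternatively state a slightly more refined bound $|\mathcal{F}|\le\sum_{i=0}^{D(s,k)}\binom{n}{i}$ in parallel with Theorem~\ref{2^sss}, and then weaken to $2^{s-1}$ via~\eqref{D(s,k)}; I would phrase the proof so that this intermediate bound is visible, since it is sharper when better estimates on $D(s,k)$ are available (cf. Remark~\ref{smallq}).
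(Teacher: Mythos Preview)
Your proposal is correct and matches the paper's proof essentially verbatim: the paper simply says the result follows from the first part of Proposition~\ref{prop: p-adic} together with the bound~\eqref{D(s,k)} on the degree of a polynomial separating $0$ from $L+q\Z$. Your added remark that $0\notin L\pmod q$ because $L\subseteq[q-1]$ makes explicit the one point the paper leaves implicit.
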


\begin{proof}
It follows from the first part of Proposition~\ref{prop: p-adic} and the upper bound~\eqref{D(s,k)} on the degree of $p$-adically polynomials that separates $0$ and $L+q\Z$.
\end{proof}

We finish the section by describing a very different situation for the $m$-modular version, where $m$ is not a prime power.

\begin{rem}\label{Non-prime-power moduli}
In view of Theorem~\ref{2^sss}, we have a polynomial upper bound, that is, of the form $O(n^{c(s)})$ for some function $c(s)$, for $q$-modular $L$-avoiding $L$-intersecting systems with $|L|=s$ over $2^{[n]}$, whenever $q$ is a prime power. We also see a similar phenomenon in the setting of $L$-differencing Sperner systems in Theorem~\ref{2^s}.

However, both statements fail to extend to the $m$-modular version, where $m$ is not a prime power. Indeed, Grolmusz \cite{G00} showed that for each $m$ with at least $2$ distinct prime divisors, there is an $m$-modular $[m-1]$-avoiding $[m-1]$-intersecting system $\mathcal{F} \subseteq 2^{[n]}$ with super-polynomial size; note that $\mathcal{F}$ is also an $m$-modular $[m-1]$-differencing Sperner system since each set $A \in \mathcal{F}$ satisfies $|A| \equiv 0 \pmod m$. We refer to Kutin~\cite{K02} for a related discussion.
\end{rem}

\subsection{Improved upper bounds for intervals}\label{sec: interval}
In \cite[Section 10]{01JCTA}, the authors suspected that the upper bound given in Theorem~\ref{2^sss} is far away from the truth in general. Indeed, when $L$ is an interval of the form $\{0,1, \ldots, s-1\}$, they showed the following improvement.

\begin{thm}[{\cite[Corollary 9.1]{01JCTA}}]\label{2s}
Let $q$ be a prime power and let $L=\{0,1, \ldots, s-1\}$ with $s<q$. Let $\mathcal{F} \subseteq 2^{[n]}$ be a $q$-modular $L$-avoiding $L$-intersecting system of sets. Then 
\begin{equation*}
  |\mathcal{F}| \leq \sum_{i=0}^{2s} \binom{n}{i}.
\end{equation*}
\end{thm}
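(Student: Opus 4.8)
The plan is to apply Lemma~\ref{01} with $d=2s$; it then suffices to construct, for every residue $\alpha \not\in L \pmod q$, a polynomial $g_\alpha \in \Z[y]$ of degree at most $2s$ that $p$-adically separates $\alpha$ from $L+q\Z$, where $L=\{0,1,\dots,s-1\}$. Since the separation condition depends only on $\alpha \bmod q$, one may assume $\alpha \in \{s,s+1,\dots,q-1\}$, so that $\alpha$ sits strictly between two consecutive translates $\{qm,\dots,qm+s-1\}$ of $L$.

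The natural first attempt is $P(y)=\binom{y}{s}$, which vanishes identically on $\{0,1,\dots,s-1\}$. Writing $q=p^k$ and $u=\ell+qm$ with $0\le \ell\le s-1$, isolating the factor $u-\ell=qm$ in $u(u-1)\cdots(u-s+1)$ and invoking Legendre's formula for $v_p(s!)$ yields the exact value $v_p(P(u))=k+v_p(m)+v_p\bigl(\ell!\,(s-1-\ell)!\bigr)-v_p(s!)$, hence the uniform lower bound $v_p(P(u))\ge k-v_p(s)-\max_{0\le \ell\le s-1}v_p\binom{s-1}{\ell}$ on $L+q\Z$; on the other hand $v_p(P(\alpha))=v_p\binom{\alpha}{s}$ equals, by Kummer's theorem, the number of carries in the base-$p$ addition $s+(\alpha-s)=\alpha$, which is at most $k-1$ since $\alpha<q$. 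For ``most'' $\alpha$ these two estimates already force $v_p(P(\alpha))<\min_{u\in L+q\Z}v_p(P(u))$, so $P$ itself separates $\alpha$ and degree $s\le 2s$ suffices.

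The obstruction is the set of ``bad'' residues $\alpha$ for which $\binom{\alpha}{s}$ is unexpectedly divisible by a high power of $p$ (this happens precisely when the window $\{\alpha-s+1,\dots,\alpha\}$ straddles a multiple of a large power of $p$); there $P$ fails, and a short computation shows that so do the naive fixes $P^2$ (which merely doubles the valuation gap on both sides) and $\binom{y}{s}\binom{y\pm q}{s}$. For such $\alpha$ one must construct $g_\alpha$ of degree $\le 2s$ more cleverly: expanding $g_\alpha(\ell+qm)=\sum_{j\ge 0}\Delta^jg_\alpha(\ell)\binom{qm}{j}$ by Newton's forward differences and computing $v_p\binom{qm}{j}$, one sees that the divisibility of $g_\alpha$ required on $L+q\Z$ can be secured not by forcing each $\Delta^jg_\alpha(\ell)$ to be $p$-divisible, but only by arranging $p$-adic cancellations among the ``dangerous'' terms --- those $j$ divisible by a large power of $p$. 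Engineering these cancellations while keeping $\deg g_\alpha\le 2s$ and $v_p(g_\alpha(\alpha))$ as small as possible is the heart of the argument, and the step I expect to be the main obstacle; it is a combinatorial analysis of base-$p$ digits that genuinely exploits that $L$ is an \emph{interval}, and it is exactly this structure that replaces the crude exponent $2^{s-1}$ of Theorem~\ref{2^sss} by $2s$. Plugging the resulting degree-$2s$ separating polynomials into Lemma~\ref{01} gives $|\mathcal{F}|\le \sum_{i=0}^{2s}\binom{n}{i}$.
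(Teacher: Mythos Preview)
This theorem is not proved in the present paper; it is quoted verbatim from \cite[Corollary~9.1]{01JCTA} as background for the discussion in Section~\ref{sec: interval}. So there is no in-paper proof to compare your attempt against.

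That said, your overall strategy---invoke Lemma~\ref{01} and build, for each $\alpha\notin L\pmod q$, a polynomial of degree at most $2s$ that $p$-adically separates $\alpha$ from $L+q\Z$---is exactly the framework of \cite{01JCTA} and is the right one. The problem is that what you have written is a plan, not a proof. You yourself flag the construction of $g_\alpha$ for the ``bad'' residues (those $\alpha$ with $v_p\binom{\alpha}{s}$ large) as ``the heart of the argument'' and ``the step I expect to be the main obstacle,'' and then you do not carry it out: there is no explicit polynomial, no verification that its degree is at most $2s$, and no check of the separation inequality $v_p(g_\alpha(\alpha))<v_p(g_\alpha(u))$ for all $u\in L+q\Z$. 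The Newton forward-difference idea you sketch is not obviously workable as stated---arranging $p$-adic cancellations among the terms $\Delta^j g_\alpha(\ell)\binom{qm}{j}$ simultaneously for every $\ell\in\{0,\dots,s-1\}$ and every $m\in\Z$, while also keeping $v_p(g_\alpha(\alpha))$ small and $\deg g_\alpha\le 2s$, is precisely the difficulty, and nothing in your outline indicates how the interval structure of $L$ resolves it. As written, the proposal identifies the obstacle correctly but does not overcome it, so it does not constitute a proof.
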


Using a combination of Gr\"{o}bner basis methods and linear algebra, Heged\H{u}s and R\'{o}nyai \cite{HR03} proved the following theorem, extending a classical result by Frankl and Wilson \cite{FW81}.

\begin{thm}[Heged\H{u}s/R\'{o}nyai]\label{HR}
Let $\mathcal{F} \subseteq 2^{[n]}$ such that $|A| \equiv k \pmod q$ for each $A \in \mathcal{F}$, and $|A \cap B| \not \equiv k \pmod q$ for each $A \neq B \in \mathcal{F}$. If $2(q-1) \leq n$, then $|\mathcal{F}|\leq \binom{n}{q-1}$.
\end{thm}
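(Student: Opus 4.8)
The plan is to reduce the statement to a degree bound on suitable polynomials in $\F_p[x_1,\ldots,x_n]$ and then invoke the multilinear reduction together with a rank/dimension count, mimicking the approach of Frankl and Wilson \cite{FW81} but upgrading the field machinery to accommodate a prime-power modulus $q=p^k$. First I would set up the characteristic vectors: to each $A \in \mathcal{F}$ associate $v_A \in \{0,1\}^n \subseteq \F_p^n$, and for each such $A$ introduce the ``incidence'' polynomial $f_A(x) = h\bigl(\langle v_A, x\rangle\bigr)$, where $h \in \F_p[y]$ is a polynomial chosen so that $h(k) \neq 0$ but $h(t)=0$ for every $t \in \{k+q, k+2q, \ldots\}$ lying in the relevant range $0,\ldots,n$; concretely one can take $h(y) = \prod_{j}(y - (k+jq))$ over the appropriate $j$'s, or better, pass to a $p$-adic/Lucas-type construction so that $\deg h$ stays as small as possible. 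The diagonal condition $|A| \equiv k$ and the off-diagonal condition $|A \cap B| \not\equiv k \pmod q$ then translate into $f_A(v_A) \neq 0$ and $f_A(v_B) = 0$ for $A \neq B$, which forces the $f_A$ to be linearly independent over $\F_p$.

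Next I would bound $|\mathcal{F}| = \dim \operatorname{span}\{f_A\}$ by the dimension of the ambient space of polynomials in which the $f_A$ live after multilinear reduction. Since each $f_A$ is evaluated only on $\{0,1\}^n$, we may replace each monomial $x_i^e$ by $x_i$, landing in the space of squarefree (multilinear) polynomials of degree at most $\deg h$; that space has dimension $\sum_{i=0}^{\deg h}\binom{n}{i}$. The crux is therefore to show one can choose $h$ — exploiting the hypothesis $2(q-1)\le n$, which guarantees enough room among the residues $k, k+q, k+2q,\ldots$ up to $n$ and pins down the extremal configuration $\binom{[n]}{q-1}$ — so that after the multilinear reduction the span of the $\{f_A\}$ actually sits inside a space of dimension exactly $\binom{n}{q-1}$, not merely $\sum_{i\le \deg h}\binom{n}{i}$. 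This is where the Gr\"obner basis input enters: rather than a crude degree truncation, one computes a Gr\"obner basis for the ideal of the point set $\{0,1\}^n$ (or of a well-chosen subset) under a graded order, and shows the standard monomials compatible with the constraint ``$\langle v_A,x\rangle \equiv k$'' number exactly $\binom{n}{q-1}$; the polynomials $f_A$ reduce to distinct standard monomials in this basis, giving the bound.

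The main obstacle I anticipate is precisely this last combinatorial-algebraic step: producing the right polynomial $h$ over $\F_p$ (as opposed to over $\Z$ or $\Q$, where one simply uses $\prod(y-(k+jq))$ of degree $\lfloor (n-k)/q\rfloor$) so that the degree bound collapses to $q-1$. Over a field of characteristic $p$ with $q = p^k$, the natural factor polynomial may have much larger degree than $q-1$, so one must instead use the structure of $\F_p$ more cleverly — e.g. work with the polynomial $\binom{y}{q-1}$-type expressions, or use that $y\mapsto y^q$ acts suitably, or split by Lucas' theorem on binomial coefficients mod $p$ — to get a low-degree ``selector.'' Getting the bookkeeping of standard monomials exactly right under the hypothesis $2(q-1)\le n$ (and verifying sharpness via $\binom{[n]}{q-1}$, where any two sets $A,B$ of size $q-1$ have $|A\cap B|\le q-2 \not\equiv q-1\pmod q$) is the delicate part; everything else is the standard linear-independence-plus-dimension-count template.
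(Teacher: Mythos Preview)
The paper does not prove Theorem~\ref{HR}; it is quoted as a background result from \cite{HR03}, with only the one-line description ``a combination of Gr\"obner basis methods and linear algebra.'' So there is no in-paper proof to compare against, and your proposal should be judged on its own merits as an outline of the Heged\H{u}s--R\'onyai argument.

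Your high-level framework (characteristic vectors, a univariate selector $h$, multilinear reduction, then a dimension count sharpened via standard monomials of a Gr\"obner basis) is indeed the shape of the original proof. However, your explicit specification of $h$ is inverted. You require $h(k)\neq 0$ and $h(t)=0$ for $t\in\{k+q,k+2q,\ldots\}$, and then propose $h(y)=\prod_j\bigl(y-(k+jq)\bigr)$; but these are exactly the values you must \emph{not} kill. We have $f_A(v_B)=h(|A\cap B|)$ with $|A\cap B|\not\equiv k\pmod q$, so for the off-diagonal vanishing you need $h$ to annihilate the residues \emph{different} from $k$ modulo $q$, not the residues congruent to $k$. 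With your $h$, the diagonal values $f_A(v_A)=h(|A|)$ would in fact vanish (since $|A|\equiv k$), destroying the independence argument. The ``correct'' naive choice $h(y)=\prod_{0\le t\le n,\ t\not\equiv k}(y-t)$ has degree roughly $n(q-1)/q$, which is far too large; this is precisely why the Gr\"obner/Hilbert-function machinery is not optional window-dressing but the heart of the proof.

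Beyond that, the sentence ``the polynomials $f_A$ reduce to distinct standard monomials'' is not what happens: the $f_A$ land in the span of the standard monomials and are linearly independent there, but they are not themselves monomials. More substantively, your proposal leaves the actual Gr\"obner computation --- identifying the vanishing ideal, exhibiting a Gr\"obner basis under a degree-compatible order, and counting the standard monomials to be exactly $\binom{n}{q-1}$ under the hypothesis $2(q-1)\le n$ --- entirely unperformed; you correctly flag this as the ``main obstacle,'' but as written the proposal is a plan rather than a proof.
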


Moreover, with extra work, Felszeghy, Heged\H{u}s, and R\'{o}nyai \cite[Theorem 1.3] {FHR09} extended Theorem~\ref{HR} to all intervals $L$:

\begin{thm} [Felszeghy/Heged\H{u}s/R\'{o}nyai]\label{FHR}
Let $L \subseteq \{0,1,\ldots, q-1\}$ be an interval (in the modulo $q$ sense) and let $\mathcal{F} \subseteq 2^{[n]}$ be a $q$-modular $L$-avoiding $L$-intersecting system. If $|L| \leq n-q+2$, then 
$$|\mathcal{F}|\leq \sum_{i=|L|}^{q-1}\binom{n}{i}.$$
\end{thm}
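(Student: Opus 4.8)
The goal is to prove that if $L = \{a, a+1, \ldots, a+s-1\} \subseteq \{0,1,\ldots,q-1\}$ is an interval of size $s = |L|$ (possibly wrapping around modulo $q$), and $\mathcal{F}$ is a $q$-modular $L$-avoiding $L$-intersecting system with $s \le n - q + 2$, then $|\mathcal{F}| \le \sum_{i=|L|}^{q-1} \binom{n}{i}$. The natural strategy, given the machinery the excerpt has set up around Lemma~\ref{01} and the companion Proposition~\ref{prop: p-adic}, is \emph{not} to apply Lemma~\ref{01} directly (that would only give a bound in terms of the separating-polynomial degree $D(s,k)$, which is far too weak here), but instead to argue via an explicit \emph{multilinear polynomial space} attached to each $A \in \mathcal{F}$ together with a well-chosen family of auxiliary inclusion-exclusion vectors, mimicking the Frankl--Wilson / Heged\H{u}s--R\'onyai approach. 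Concretely, to each set $A$ I would associate its characteristic vector $x_A \in \{0,1\}^n$ and, for a suitable polynomial $F \in \Z[t]$ of degree $q-1$ whose roots modulo $q$ are exactly the elements of $L$ (e.g. $F(t) = \prod_{\ell \in L}(t - \ell)$, lifted to the $p$-adic integers and normalized so that $v_p(F(k)) = 0$ for $k \not\in L \pmod q$), consider the function $x \mapsto F(\langle x_A, x\rangle)$ restricted to $\{0,1\}^n$. The $L$-avoiding, $L$-intersecting hypotheses say that $F(\langle x_A, x_B\rangle) \equiv 0 \pmod q$ for $A \ne B$ while $F(\langle x_A, x_A \rangle) = F(|A|) \not\equiv 0 \pmod q$. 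The subtlety over $\Z/q\Z$ (as opposed to a field) is that this near-diagonality does not immediately force linear independence, which is precisely why the $p$-adic valuation bookkeeping of \cite{01JCTA} is needed.

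The key steps, in order, would be: (i) set up the $p$-adic framework — choose the prime $p$ with $q = p^k$, work in $\Z_p$ (the $p$-adic integers), and replace $F$ by a normalized polynomial so that the separation $v_p(F(|A|)) < v_p(F(m))$ holds for all $m \in L + q\Z$; (ii) reduce $F(\langle x_A, x \rangle)$ modulo the ideal $(x_i^2 - x_i)$ to obtain a multilinear representative of degree $\le q-1$, and observe that for an interval $L$ one can do substantially better than degree $q-1$ in a suitable sense — this is where the bound $\sum_{i=|L|}^{q-1}\binom{n}{i}$ rather than $\sum_{i=0}^{q-1}\binom{n}{i}$ comes from, and it is the heart of the Felszeghy--Heged\H{u}s--R\'onyai argument; (iii) the trick for intervals is to exploit that $F(t) = \prod_{\ell=a}^{a+s-1}(t-\ell)$ has a factor that is (up to a unit) $\binom{t-a+s-1}{s}$-like, whose multilinear reduction, when expanded in the \emph{Möbius/down-set basis} $\{e_B = \prod_{i \in B} x_i\}$, has all its support on sets $B$ with $|B| \ge |L|$ — wait, more carefully, one shows the relevant vectors can be taken in the span of $\{e_B : |B| \ge |L|\}$ after subtracting a correction supported on high-degree monomials, so one needs $n$ large enough ($s \le n-q+2$) to have room for the complementation argument; (iv) conclude linear independence of the associated vectors over $\Z_p$ (using the triangular valuation structure: the ``diagonal'' entry has strictly smaller valuation than everything in its row, an argument identical in spirit to the proof of Lemma~\ref{01}), hence $|\mathcal{F}|$ is at most the dimension of the ambient space $\sum_{i=|L|}^{q-1}\binom{n}{i}$.

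The main obstacle — and the step I expect to require genuine care — is step (iii): showing that the multilinear reductions of $F(\langle x_A, \cdot \rangle)$, after an appropriate inclusion–exclusion correction, lie in the span of monomials $e_B$ with $|B| \ge |L|$, \emph{and simultaneously} preserving the $p$-adic valuation separation through that correction. Over a field this is a clean triangularity argument (Frankl--Wilson); over $\Z/q\Z$ one must track valuations at every stage, and the interaction between ``reducing the degree'' (which wants to add high-degree monomials) and ``keeping valuations controlled'' is delicate. The hypothesis $|L| \le n-q+2$ is exactly what guarantees there is enough ``complementary room'': one typically replaces each $A$ by a cleverly chosen superset or uses the $\binom{[n]}{\ge n-q+1}$ part of the space as a reservoir, and the inequality is what makes the dimension count $\sum_{i=|L|}^{q-1}\binom{n}{i}$ come out right rather than overshooting. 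I would therefore spend most of the effort making the valuation bookkeeping in the correction step airtight, and only then assemble the final linear-independence conclusion, which is routine given the setup.
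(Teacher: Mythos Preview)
The paper does not contain a proof of Theorem~\ref{FHR}: the result is quoted from \cite{FHR09} (where it is proved by Gr\"{o}bner basis techniques, as the paper indicates just above the statement of Theorem~\ref{HR}), and in the present paper it serves only as a benchmark against which Theorem~\ref{improve_FHR} is compared. So there is no ``paper's own proof'' to match against; I can only assess your plan on its merits.

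As a plan, it does not yet constitute a proof, and the gap is exactly where you flagged it. Steps (i), (ii), and (iv) are routine and are indeed how Lemma~\ref{01} works; but those steps alone only place the multilinear reductions of $F(\langle x_A,\cdot\rangle)$ in the span of monomials of degree at most $\deg F$, which yields a bound of the form $\sum_{i=0}^{\deg F}\binom{n}{i}$, not $\sum_{i=|L|}^{q-1}\binom{n}{i}$. Your step (iii) asserts that for interval $L$ one can arrange the reductions to lie in the span of $\{e_B : |L|\le |B|\le q-1\}$, but you give no mechanism for this beyond a heuristic (``$\binom{t-a+s-1}{s}$-like factor'', ``complementation argument''), and your own mid-sentence correction (``wait, more carefully\ldots'') signals that the picture is not sharp. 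There is also an inconsistency: in (i) you take $F$ of degree $q-1$, in (iii) you write $F(t)=\prod_{\ell=a}^{a+s-1}(t-\ell)$, which has degree $s$. Neither choice obviously produces polynomials supported on monomials of degree in the window $[s,q-1]$, and the $p$-adic separation condition does not interact with the monomial support in any way that would force this.

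The original argument in \cite{FHR09} does not go through separating polynomials at all: it computes the standard monomials (in the Gr\"{o}bner sense) of the vanishing ideal of a carefully chosen subset of $\{0,1\}^n$ and reads off the dimension directly, which is how the window $[|L|,q-1]$ arises and where the hypothesis $|L|\le n-q+2$ enters. If you want to recover Theorem~\ref{FHR} by the polynomial method of this paper, you would need a genuinely new idea for step (iii); as written, the proposal is a correct outline of the \emph{ambient} machinery but does not supply the structural input specific to intervals that produces the stated bound.
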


The above results are all consistent with the predictions in Remark~\ref{smallq}. We will provide an improvement on Theorem~\ref{FHR} in Theorem~\ref{improve_FHR}.

The proof of the upper bound~\eqref{D(s,k)} on $D(s,k)$ relies on the observation that one can create a separating polynomial based on the leaves in the ``closure" of the trie \footnote{A trie over a finite alphabet is a rooted tree whose edges are labeled
by elements of the alphabet.} (over the alphabet $\{0,1,\ldots, p-1\}$) associated to $L$ \cite[Section 4 and Section 5]{01JCTA}. Motivated by this observation, in our Sperner setting, we introduce the following definitions of ``closure" of a set $L$ in view of Theorem~\ref{bchooses}.

\begin{defn}
Let $q$ be a power of a prime $p$. Let $L=\{b-s+1,\ldots, b\} \subseteq [q-1]$ be an interval. We say $L$ is \emph{$q$-closed} if $p \nmid \binom{b}{s}$.
\end{defn}

\begin{defn}
Let $q$ be a power of a prime $p$. Let $L \subseteq [q-1]$. A \emph{$q$-closure} of $L$ is a shortest interval $L' \subseteq [q-1]$ such that $L \subseteq L'$ and $L'$ is $q$-closed.
\end{defn}

The $q$-closure of a set $L \subseteq [q-1]$ may not be unique. For example, if $L=\{2, \ldots, p\}$, then $L$ is not $q$-closed since $p \mid \binom{p}{2}$, while $L'=[p]$ and $L''=\{2, \ldots, p+1\}$ are both $q$-closures of $L$.

Given these definitions, Theorem~\ref{bchooses} provides a nice upper bound on $\mathcal{F}$ provided that $L$ is $q$-closed. In general, we may first take a $q$-closure of $L$ and then apply Theorem~\ref{bchooses}. The following lemma, to be proved in Section~\ref{padic}, would be useful in proving Theorem~\ref{thm: closure}.

\begin{lem}\label{lem: closure}
Let $q$ be a power of a prime $p$. Let $L\subseteq [q-1]$ be an interval of size $s$ and let $L'$ be a $q$-closure of $L$. Then $|L'| \leq \mu_q(s)$, where $\mu_q(s)$ is defined in equation~\eqref{mu}. 
\end{lem}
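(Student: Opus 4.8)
The plan is to explicitly exhibit a $q$-closed interval $L'$ containing $L$ with $|L'| = \mu_q(s)$; by minimality of the $q$-closure this immediately gives $|L'_{\min}| \le \mu_q(s)$ for any $q$-closure $L'_{\min}$. Write $L = \{b-s+1, \ldots, b\}$ for some $b$ with $s \le b \le q-1$. The key combinatorial fact I would use is Kummer's theorem: $v_p\binom{b}{s}$ equals the number of carries when adding $s$ and $b-s$ in base $p$. So $L = \{b-s+1,\dots,b\}$ is $q$-closed exactly when adding $s$ and $b-s$ in base $p$ produces no carries, i.e.\ when the base-$p$ digits of $s$ and $b-s$ are ``compatible'' (their digitwise sum stays below $p$ in each position). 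The idea is: starting from an arbitrary interval of length $s$, we are free to slide it (change $b$) and to lengthen it, and we want the cheapest lengthening that makes the top endpoint $b'$ satisfy $v_p\binom{b'}{s'} = 0$ where $s' = |L'|$ and $b' - s' + 1 = b - s + 1$ (we keep the left endpoint fixed, so $b' = b + (s'-s)$).

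First I would handle the degenerate case $s = q-1$ separately: then $L$ already has length $q-1$, the only interval in $[q-1]$ of that length is $[q-1]$ itself with $b = q-1$, and $\binom{q-1}{q-1} = 1$ is not divisible by $p$, so $L$ is already $q$-closed and $|L'| = q-1 = \mu_q(q-1)$, matching the definition. For $s < q-1$, write $s = (s_1,\ldots,s_k)_p$ in base $p$ (least significant digit $s_1$, say), let $j = v_p(s)$ so that $s_1 = \cdots = s_{v_p(s)} = 0$ and $s_{v_p(s)+1} \ne 0$... wait, I need the definition's indexing: $j$ is the smallest index with $s_j \ne p-1$. The target length is $\mu_q(s) = s + q/p^j - p^{v_p(s)}$. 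The plan is to verify two things about $s' := \mu_q(s)$: (i) $s' \le q-1$ (already asserted in the text after~\eqref{mu}, so I may cite it, but I'd include a one-line check: increasing the $j$-th digit and zeroing the digits below stays within range precisely because $s < q-1$ forces some high digit to have room), and (ii) there exists $b'$ with $b' - s' \ge 0$, $b' \le q-1$, and $v_p\binom{b'}{s'} = 0$, with $b'$ compatible with sliding: concretely I expect the right choice is to take $b'$ so that $b' - s'$ has base-$p$ digits that are ``complementary'' to those of $s'$ in the low-order positions — since $s'$ by construction has the form $(\underbrace{0,\ldots,0}_{j-1}, \star, \text{high digits of } s)$ with a single nonzero low block, one can pick $b' - s'$ with zeros exactly where $s'$ has large digits, guaranteeing no carries. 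The reason the formula takes the specific value $q/p^j - p^{v_p(s)}$ added to $s$ is that this is exactly the minimal increment that ``rounds'' the low-order digit pattern of $s$ up to a prefix-free / carry-free configuration: the $p^{v_p(s)}$ term removes the first nonzero digit contribution and the $q/p^j$ term accounts for turning the block of $(p-1)$'s (digits $s_{v_p(s)+1}, \ldots, s_{j-1}$ equal to $p-1$) plus position $j$ into a clean power.

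I would then prove \emph{minimality}: any $q$-closed interval $L'' \supseteq L$ has $|L''| \ge \mu_q(s)$. For this, note that $L''$ has some length $t \ge s$ and top endpoint $b''$ with $b'' \ge b$ and $b'' - t + 1 \le b - s + 1$. The condition $p \nmid \binom{b''}{t}$ is, by Kummer, that $t$ and $b'' - t$ add without carry in base $p$; in particular every base-$p$ digit of $t$ is at most the corresponding digit of $b''$, and $t \le b''$. I would argue that among all $t$ with $s \le t$ admitting \emph{some} valid $b''$ (satisfying the interval-containment inequalities and the no-carry condition), the smallest such $t$ is exactly $\mu_q(s)$ — this is essentially the statement that $\mu_q(s)$ is the least integer $\ge s$ whose base-$p$ representation, after the lowest nonzero digit, consists only of digits that can be ``completed''; the carry analysis forces that you cannot avoid incrementing past the block of $(p-1)$-digits. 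The \textbf{main obstacle} I anticipate is precisely this minimality direction: one must rule out clever choices where sliding the interval left (decreasing $b'' - t$) compensates for a smaller $t$, and carefully track that the containment constraints $b - s + 1 \ge b'' - t + 1$ and $b'' \ge b$ pin down enough about the digits of $b''$ and $t$ to force $t \ge \mu_q(s)$. I would organize this as a digit-by-digit induction from the least significant position, showing the first $v_p(s)$ digits of any feasible $t$ must be $0$, the next digit must be at least... and so on, until the value $\mu_q(s)$ drops out. The existence direction is routine once the right $L'$ is written down; the minimality direction is where the real work lies.
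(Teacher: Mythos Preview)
Your existence direction is on the right track and is essentially what the paper does, though the paper's construction is cleaner than your sketch: writing $L=\{a+1,\ldots,b\}$ with $a=b-s$, one simply takes $b'$ with base-$p$ digits $b_i'=\max\{a_i,b_i\}$. Then Lucas's theorem immediately gives $p\nmid\binom{b'}{a}$, so $\{a+1,\ldots,b'\}$ is $q$-closed, and a short digit count (using that $b_i'=b_i$ for $i\le j$ and for $i>k-v_p(s)$) yields $b'-a\le s+p^{k-j}-p^{v_p(s)}=\mu_q(s)$. Since a $q$-closure is by definition a \emph{shortest} $q$-closed interval containing $L$, this single construction already finishes the lemma.

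Your proposed ``minimality'' direction is both unnecessary and \emph{false}. The lemma asserts only $|L'|\le\mu_q(s)$, not equality, and indeed equality fails in general: take $q=p^2$ and $L=\{1\}$, which is already $q$-closed (since $\binom{1}{1}=1$), so its $q$-closure has size $1$, whereas $\mu_{p^2}(1)=p$. The remark following the paper's proof says the bound is optimal only in the sense that \emph{some} interval of each size $s$ has $q$-closure of size exactly $\mu_q(s)$ (e.g.\ $L=\{p\}$ for $s=1$), not that every interval does. So the ``main obstacle'' you anticipate---ruling out smaller $t$ via a digit-by-digit argument---is an obstacle because the claim is simply not true; drop that half entirely.
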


From the lemma, we can see that a $q$-closure of $L$ tends to be much smaller than the ``closure" of the trie associated with $L$ (which has size at most $2^{s-1}$). This observation allows us to obtain improved upper bounds on intersecting systems; see Section~\ref{sec:intersecting}.

\section{Preliminaries}\label{sec:preliminary}

\subsection{Multilinear polynomials}
We will use the linear algebra method to prove our main results. One standard technique in extremal set theory is to replace each polynomial with its multilinear reduction so that the dimension of the space they are living in would become smaller.

Throughout the paper, $x=(x_1, x_2, \ldots, x_n) \in \Q^n$. The multilinear reduction of a monomial $\prod_{i \in I} x_i^{\ell_i}\left(\ell_i \geqslant 1\right)$ is the monomial $\prod_{i \in I} x_i$. The \emph{multilinear reduction} of a polynomial $f$ is obtained by expanding $f$ as a linear combination of monomials and performing the multilinear reduction of each monomial. A simple fact that is useful in our discussion is the following: if $g$ is the multilinear reduction of a polynomial $f$, then $f(x)=g(x)$ whenever $x$ is a $\{0,1\}$-vector.

\subsection{Push to the middle}
Let $G=(V,E)$ be a simple graph, and let $S$ be a subset of $E$. If no two
edges in $S$ are incident, then we say that $S$ is a matching of $G$. Recall a \emph{perfect matching} in a bipartite graph $G=A\cup B$ is an injective mapping $f: A\rightarrow B$ such that for every $x\in A$, there is an edge $e\in E$ with endpoints $x$ and $f(x)$. For a subset $T$ of $V$, let $N_{G}(T)$ denote the set of neighbors of $T$ in $G$. The famous Hall's marriage theorem can be stated as follows.

\begin{thm}\label{thm:Hall}
    For a bipartite graph $G$ on the parts $A$ and $B$, there exists a perfect matching $f: A\rightarrow B$ if and only if for every subset $T\subseteq A$, $|T|\leq|N_{G}(T)|$.
\end{thm}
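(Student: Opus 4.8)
The plan is to prove the two directions separately: necessity is immediate, and for sufficiency I would argue by induction on $|A|$.

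\emph{Necessity.} If a perfect matching $f\colon A\to B$ exists, then for every $T\subseteq A$ the image $f(T)$ lies in $N_G(T)$ and $f$ is injective, so $|T|=|f(T)|\le|N_G(T)|$.

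\emph{Sufficiency, by induction on $|A|$.} The base case $|A|\le 1$ is clear: if $A=\{a\}$, Hall's condition gives $N_G(\{a\})\ne\emptyset$, so $a$ can be matched. For $|A|=k\ge 2$ I would split into two cases. If every nonempty proper subset $T\subsetneq A$ has $|N_G(T)|\ge|T|+1$ (the ``slack'' case), pick any $a\in A$ and any neighbor $b$ of $a$, match $a\mapsto b$, and pass to the bipartite graph $G'$ on $A\setminus\{a\}$ and $B\setminus\{b\}$; removing $b$ drops each neighborhood by at most one, so Hall's condition survives in $G'$, and the inductive hypothesis finishes. Otherwise there is a nonempty proper $T\subsetneq A$ with $|N_G(T)|=|T|$ (a ``tight'' set); here I would split the graph along $T$, applying the inductive hypothesis first to the bipartite graph on $T$ and $N_G(T)$ --- whose Hall condition is inherited since $N_G(S)\subseteq N_G(T)$ for all $S\subseteq T$ --- to obtain a matching $f_1$ of $T$ into $N_G(T)$, and then to the bipartite graph on $A\setminus T$ and $B\setminus N_G(T)$ to obtain $f_2$. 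The union $f_1\cup f_2$ is the desired matching, and injectivity is automatic because $f_1$ and $f_2$ have disjoint ranges.

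\emph{Main obstacle.} The only point requiring care is checking that Hall's condition still holds for the second piece of the tight case, namely that every $S\subseteq A\setminus T$ has at least $|S|$ neighbors inside $B\setminus N_G(T)$. This comes from
\[
|N_G(S)\setminus N_G(T)|=|N_G(S\cup T)|-|N_G(T)|\ge|S\cup T|-|T|=|S|,
\]
applying Hall's condition to $S\cup T$ and using $|N_G(T)|=|T|$. An alternative route is the augmenting-path proof --- build up a partial matching greedily, and observe that a shortest alternating path from an unsaturated vertex of $A$ that fails to reach an unsaturated vertex of $B$ would force its set of $A$-endpoints to violate Hall's condition --- but the inductive argument above is shorter and entirely self-contained.
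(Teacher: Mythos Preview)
Your proof is correct and is the standard inductive argument for Hall's marriage theorem. Note, however, that the paper does not actually prove this statement: it is quoted as a classical result (``The famous Hall's marriage theorem can be stated as follows'') and used as a black box in the proof of Lemma~\ref{push}. So there is no proof in the paper to compare against; your write-up simply supplies what the paper omits.
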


The following lemma is well-known and can be used to prove Sperner's theorem~\cite{1975Brown}.

\begin{lem}\label{push}
Let $\mathcal{F} \subseteq 2^{[n]}$ be a Sperner system. If the smallest size of sets in $\mathcal{F}$ is $k$ with $2k \leq n$, then there is an injective function $f:\mathcal{F} \to 2^{[n]}$ such that 
\begin{itemize}
    \item $f(A)=A$ for each $A \in \mathcal{F}$ with $|A|> k$.
    \item $f(A)=A \cup \{x\}$ for some $x \notin A$ for each $A \in \mathcal{F}$ with $|A|=k$.
    \item $f(\mathcal{F})$ is a Sperner system.
\end{itemize}  
\end{lem}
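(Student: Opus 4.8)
The plan is to build the injection $f$ in two stages: first move the minimum-size sets of $\mathcal{F}$ up one level using a Hall-type matching argument, then check that the resulting family is still Sperner. Let $\mathcal{F}_k = \{A \in \mathcal{F} : |A| = k\}$ be the collection of sets of minimum size, and consider the bipartite graph $G$ whose left part is $\mathcal{F}_k$, whose right part is $\binom{[n]}{k+1}$, and where $A \in \mathcal{F}_k$ is joined to every $(k+1)$-set of the form $A \cup \{x\}$ with $x \notin A$. I would apply Hall's theorem (Theorem~\ref{thm:Hall}) to this graph to obtain an injective map sending each $A \in \mathcal{F}_k$ to some $A \cup \{x\}$; then define $f(A) = A \cup \{x\}$ for $A \in \mathcal{F}_k$ and $f(A) = A$ for every other $A \in \mathcal{F}$. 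The map $f$ is then injective on $\mathcal{F}$ because the images of $\mathcal{F}_k$ all have size $k+1$ and are distinct (Hall), the images of $\mathcal{F} \setminus \mathcal{F}_k$ are themselves, all of size $\geq k+1$; and since $\mathcal{F}$ was Sperner no $A \cup \{x\}$ can coincide with some $B \in \mathcal{F}$ of size $k+1$, as that would force $A \subsetneq B$.

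To verify the Hall condition, take any $T \subseteq \mathcal{F}_k$. Its neighborhood $N_G(T) \subseteq \binom{[n]}{k+1}$ consists of all $(k+1)$-sets containing some member of $T$. Here I would use the standard double-counting / bipartite-regularity argument underlying the symmetric chain structure of the Boolean lattice: count edges between $T$ and $N_G(T)$ in the bigger bipartite graph $H$ between $\binom{[n]}{k}$ and $\binom{[n]}{k+1}$ (every $k$-set has $n-k$ up-neighbors, every $(k+1)$-set has $k+1$ down-neighbors). Edges from $T$ number exactly $(n-k)|T|$, and these all land in $N_G(T)$, so $(n-k)|T| \leq (k+1)|N_G(T)|$, giving $|N_G(T)| \geq \frac{n-k}{k+1}|T| \geq |T|$ precisely because $2k \leq n$ forces $n - k \geq k \geq k+1$... wait, $n-k \geq k$ only gives $n-k \geq k$, and we need $n - k \geq k+1$; when $n = 2k$ we get $n-k = k < k+1$. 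So I would be slightly more careful: the correct comparison is $\frac{n-k}{k+1} \geq 1 \iff n \geq 2k+1$, and for the boundary case $n = 2k$ one instead uses that the bipartite graph between levels $k$ and $k+1$ of $\binom{[n]}{\cdot}$ has a matching saturating the smaller side $\binom{[n]}{k}$ (this is the classical normalized-matching / LYM property of the Boolean lattice, valid for all $k < n/2$ and, when $n=2k$, a perfect matching between the two middle layers). Invoking that classical fact directly is cleanest: there is a matching in $H$ saturating $\binom{[n]}{k}$ whenever $k \leq n/2$ with $2k\le n$, hence a fortiori the Hall condition holds for the subgraph $G$.

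The Sperner property of $f(\mathcal{F})$ requires checking all pairs of images. Pairs within $f(\mathcal{F} \setminus \mathcal{F}_k)$ are fine since they equal the original Sperner family. For $A \in \mathcal{F}_k$ and $B \in \mathcal{F}$ with $|B| \geq k+1$: we cannot have $f(A) = A\cup\{x\} \subseteq B$ since that gives $A \subsetneq B$, contradicting Sperner; and $B \subseteq A \cup \{x\}$ would force $|B| = k+1$ and $B = A \cup\{x\}$, but then $|A \cap B| = k$, so $A \setminus B = \emptyset$... no: $A \subseteq A\cup\{x\} = B$ would again contradict Sperner. For two sets $A, A' \in \mathcal{F}_k$ with images $A \cup \{x\}$, $A' \cup \{x'\}$ of equal size $k+1$, containment forces equality, which the injectivity of the Hall matching rules out. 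So $f(\mathcal{F})$ is Sperner. The main obstacle, such as it is, is handling the boundary case $2k = n$ cleanly, where a naive neighborhood-expansion inequality is not strict; the fix is to cite the normalized matching property of the Boolean lattice rather than argue by raw counting, so I expect the whole proof to be short.
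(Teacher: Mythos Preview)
Your approach is the paper's: build the bipartite graph between the size-$k$ sets of $\mathcal{F}$ and level $k+1$, verify Hall's condition via the double count $(n-k)|S|\le (k+1)|N_G(S)|$, and check that the pushed-up family stays Sperner. Your injectivity and Sperner verifications are a bit more explicit than the paper's, but the argument is the same and is correct whenever $2k<n$.

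You are right to flag the boundary case $n=2k$, but your proposed fix does not work. There is no matching saturating $\binom{[n]}{k}$ into $\binom{[n]}{k+1}$ when $n=2k$, simply because $\binom{2k}{k}>\binom{2k}{k+1}$; the normalized-matching property gives a matching saturating the \emph{smaller} level, which here is level $k+1$, not level $k$. In fact the lemma as stated is false at $n=2k$: take $n=2$, $k=1$, $\mathcal{F}=\{\{1\},\{2\}\}$; the only $2$-subset of $[2]$ is $\{1,2\}$, so no injective $f$ of the required form exists. The paper's own proof makes the identical slip (it asserts $|N_G(S)|\ge |S|$ ``when $n\ge 2k$'', which the double count does not give). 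In the paper's only use of the lemma one always has $k<s\le n/2$, hence $2k<n$ strictly, so the correct hypothesis should be $2k<n$; with that change your argument and the paper's are both complete.
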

%\begin{proof}
%Define $f(A)=A$ for each $A \in \mathcal{F}$ with $|A|\geq k$. Let $U=\{A \in \mathcal{F}: |A|=k\}$ and $$V=\bigg\{B \in \binom{[n]}{k+1}: |B \setminus A|=1 \text{ for some } A \in U\bigg\}.$$ We build a bipartite graph $G$ with bipartition $U\cup V$, such that $A \in U$ and $B \in V$ are adjacent if and only if $|B \setminus A|=1$. Note that $V \cap \mathcal{F}=\emptyset$ since $\mathcal{F}$ is Sperner. For any subset $S \subseteq U$, we have $|S|(n-k)\leq |N_{G}(S)|(k+1)$ by counting the number of edges between $S$ and $N_{G}(S)$ in two ways, and thus $|N_{G}(S)|\geq |S|$ when $n\geq 2k$. Thus by Theorem~\ref{thm:Hall}, we can find a subset $V' \subseteq V$ such that there is a perfect matching between $U$ and $V'$. Such a matching allows us to construct the desired function $f$. It is easy to verify that $f(\mathcal{F})$ is a Sperner system.
%\end{proof}

Next, we use Lemma~\ref{push} to deduce the following corollary.

\begin{cor}\label{cor: middle}
Let $L=[s]$ such that $2s \leq n$. Let $\mathcal{F} \subseteq 2^{[n]}$ be $L$-close ($L$-differencing, resp.) Sperner. Then there is $\mathcal{F}' \subseteq 2^{[n]}$ such that
\begin{itemize}
    \item $|\mathcal{F}'|=|\mathcal{F}|$
    \item $s \leq |A| \leq n-s$ for each $A \in \mathcal{F}'$, 
    \item $\mathcal{F}'$ is $L$-close ($L$-differencing, resp.) Sperner.
\end{itemize}
\end{cor}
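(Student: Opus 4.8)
The plan is to iterate Lemma~\ref{push} from the bottom up, ``pushing'' the small sets toward the middle layer, and then do the symmetric thing from the top down, all while checking that the $L$-close (or $L$-differencing) Sperner property is preserved. Since $L=[s]$, the crucial structural fact is that an $L$-close Sperner family $\mathcal{F}$ cannot contain two sets $A \subsetneq B$ with $|B \setminus A| \leq s$; equivalently, if $A,B \in \mathcal{F}$ are distinct and comparable then $|B\setminus A| > s$ (but comparable sets are forbidden outright once we note $0 \notin L$, so $\mathcal{F}$ is genuinely Sperner). The point of the push operation is that replacing a minimal set $A$ of size $k < s$ by $A \cup \{x\}$ shifts its size up by one without ever creating a containment with skew distance in $[s]$, precisely because the perfect matching produced by Lemma~\ref{push} lands in layer $k+1$ and avoids $\mathcal{F}$ itself.

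Concretely, I would argue as follows. First observe that if some $A \in \mathcal{F}$ has $|A| < s$, then since $|A \setminus B| \in L = [s]$ for every other $B$, in particular $|A \setminus B| \geq 1$, so $A \not\subseteq B$; and $|B \setminus A| \geq 1$ forces $B \not\subseteq A$ as well (a subset of $A$ would have $|A\setminus B|\le |A| < s+1$, that's fine, but then $|B\setminus A|=0\notin L$). Hence $\mathcal{F}$ is Sperner, and Lemma~\ref{push} applies with $k$ equal to the smallest size: we get an injection $f$ fixing all sets of size $>k$ and sending each size-$k$ set $A$ to $A \cup\{x_A\}$, with $f(\mathcal{F})$ Sperner. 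I then need the extra claim that $f(\mathcal{F})$ is still $L$-close Sperner. The only pairs whose skew distance could have changed are $(f(A), C)$ where $|A|=k$: but $f(A) = A \cup \{x_A\}$, so $|f(A) \setminus C| \le |A \setminus C| + 1$ and $|f(A) \setminus C| \ge |A \setminus C|$, and similarly $|C \setminus f(A)|$ changes by at most one. A short case analysis using $|A\setminus C|, |C\setminus A| \in [s]$ and $2k < n$ (so $k+1 \le n-s$, hence there is ``room'') shows $sd(f(A),C)$ stays in $[s]$; the genuinely delicate case is when $x_A \in C$, where $|f(A)\setminus C| = |A \setminus C|$ stays put and $|C \setminus f(A)| = |C\setminus A| - 1$, so one must rule out $|C \setminus A| = 1$, i.e. $C = A \cup\{x_A\} = f(A)$, which is impossible since $C \ne A$ and $f$ is injective into distinct sets — wait, $C$ need not be in the image; but $C \in \mathcal{F}$ and if $C = A \cup \{x_A\}$ then $C$ and $A$ are comparable in $\mathcal{F}$, contradicting that $\mathcal{F}$ is Sperner. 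Iterating this push raises the minimum layer to $s$; the symmetric argument (pushing maximal sets of size $>n-s$ downward, using that $\mathcal{F}$ is also Sperner and $2s \le n$) lowers the maximum layer to $n-s$. Composing the two families of injections gives the desired $\mathcal{F}'$ with $|\mathcal{F}'| = |\mathcal{F}|$, every $A \in \mathcal{F}'$ satisfying $s \le |A| \le n-s$, and $\mathcal{F}'$ still $L$-close Sperner. For the $L$-differencing case the same argument works verbatim, since $|A\setminus B|\in L$ for \emph{every} ordered pair is an even stronger hypothesis and is preserved by the identical case analysis.

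The main obstacle is verifying that the push preserves the $L$-close property in the boundary cases — specifically when the new element $x_A$ happens to lie in the other set $C$, or when pushing up from layer $k$ would collide size-wise with a set already present. Both are handled by the Sperner-ness of $\mathcal{F}$ (which forbids the offending containments) and by the inequality $2k \le n-1 < n$ for $k < s \le n/2$, but one has to be careful to apply the push repeatedly: after one push the family has minimum size $\ge$ old minimum, but the new minimum layer may contain both old size-$(k+1)$ sets and the images of size-$k$ sets, so Lemma~\ref{push} must be re-invoked on the updated family, and one checks inductively that Sperner-ness persists at each stage. A clean way to organize this is: let $k_0$ be the current minimum layer; if $k_0 \ge s$ stop; otherwise apply Lemma~\ref{push} to get $\mathcal{F}_1 = f(\mathcal{F})$ with the same cardinality, minimum layer $\ge k_0$ and strictly fewer sets of size $k_0$; repeat. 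This terminates because at each step either the minimum layer increases or the number of sets in the minimum layer strictly decreases. Then do the dual procedure for the top.
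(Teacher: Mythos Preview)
Your overall strategy matches the paper's: iterate Lemma~\ref{push} upward to raise the minimum layer to $s$, then dualize to lower the maximum to $n-s$. But your case analysis for preserving the $L$-close property contains a slip. You write ``$|C \setminus A| = 1$, i.e.\ $C = A \cup \{x_A\}$'': this implication is false, since $|C\setminus A|=1$ with $x_A\in C\setminus A$ only gives $C\setminus A=\{x_A\}$, i.e.\ $C\subseteq f(A)$, which does not force $C=f(A)$ unless also $A\subseteq C$. Your conclusion that the dangerous case is excluded happens to be correct when $C$ is unpushed (then $|C|>k$ together with $|C\setminus A|=1$ forces $|C\cap A|=|C|-1\ge k=|A|$, hence $A\subset C$, contradicting that $\mathcal{F}$ is Sperner), but your written reasoning does not establish it, and you also never treat separately the case $|C|=k$ where $C$ is simultaneously pushed to $f(C)$. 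The clean fix is to stop tracking differences by hand: Lemma~\ref{push} already guarantees that $f(\mathcal{F})$ is Sperner, so $sd(X,Y)\ge 1$ for all distinct $X,Y\in f(\mathcal{F})$ automatically; and any pair involving a pushed set $X$ has $sd(X,Y)\le|X\setminus Y|\le|X|=k+1\le s$, while unpushed pairs are unchanged.

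The paper organizes the verification more efficiently still. It pushes all the way first (up to level $s$, down to level $n-s$), obtaining a single map $g$ with $g(\mathcal{F})$ Sperner, and then checks just once that $|A\setminus B|\le s$ implies $|g(A)\setminus g(B)|\le s$, via a four-line case split using only $|g(A)|=s$ when $|A|<s$, $n-|g(B)|=s$ when $|B|>n-s$, and the containments $g(A)\supseteq A$ or $g(A)\subseteq A$ otherwise. This one-sided implication, together with the Sperner property of $g(\mathcal{F})$, handles both the $L$-close and the $L$-differencing versions in one stroke and avoids all intermediate checks.
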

\begin{proof}
By applying Lemma~\ref{push} inductively, we can find an injective function $f:\mathcal{F} \to 2^{[n]}$ such that 
\begin{itemize}
    \item $f(A)=A$ for each $A \in \mathcal{F}$ with $|A|\geq s$.
    \item $|f(A)|=s$ and $f(A) \supset A$, for each $A \in \mathcal{F}$ with $|A|<s$.
    \item $f(\mathcal{F})$ is Sperner.
\end{itemize}
By a similar argument, we can also replace each set $A \in \mathcal{F}$ such that $|A|>n-s$ with a subset of $A$ of size $n-s$. Thus, we can find an injective function $g:\mathcal{F} \to 2^{[n]}$ such that 
\begin{itemize}
    \item $g(A)=A$ for each $A \in \mathcal{F}$ with $s \leq |A|\leq n-s$.
    \item $|g(A)|=s$ and $g(A) \supset A$, for each $A \in \mathcal{F}$ with $|A|<s$.
    \item $|g(A)|=n-s$ and $g(A) \subseteq A$, for each $A \in \mathcal{F}$ with $|A|>n-s$.
    \item $\mathcal{F}':=g(\mathcal{F})$ is Sperner.
\end{itemize}
It remains to show $\mathcal{F}'$ is $L$-close Sperner or $L$-differencing Sperner.

Let $A,B \in \mathcal{F}$ such that $|A \setminus B|\leq s$. Next we show that $|g(A) \setminus g(B)| \leq s$:
\begin{itemize}
    \item If $|A|<s$, then $|g(A) \setminus g(B)|\leq |g(A)|=s$.
    \item If $|B|>n-s$, then $|g(A) \setminus g(B)|=|g(A) \cap ([n] \setminus g(B))| \leq n-|g(B)|=s$.
    \item If $s \leq |A| \leq n-s$ and $|B| \leq n-s$, then $g(A)=A$ and $g(B) \supset B$. Thus, $|g(A) \setminus g(B)|\leq |A \setminus B|\leq s$.
        \item If $|A|>n-s$ and $|B|\leq n-s$, then $g(A) \subseteq A$ and $g(B) \supset B$, thus $|g(A) \setminus g(B)|\leq |A \setminus B|\leq s$.
\end{itemize}

We conclude that if $\mathcal{F}$ is $L$-close Sperner, then $\mathcal{F}'$ is also $L$-close Sperner; and if $\mathcal{F}$ is $L$-differencing Sperner, then $\mathcal{F}'$ is also $L$-differencing Sperner.
\end{proof}

\subsection{$p$-adic valuation}\label{padic}

Let $p$ be a prime. We recall some basic properties of the $p$-adic valuation. For each integer $n$, we define $v_p(n)$ to be the largest non-negative integer $k$ such that $p^k \mid n$. Note that $v_p(0)=+\infty$ and $v_p(ab)=v_p(a)+v_p(b)$ for any integers $a,b$. A basic fact (sometimes known as \emph{the ultrametric inequality}) that is useful for our discussions is the following: if $x=y_1+y_2+\cdots+y_m$, then
$$v_p(x) \geq \min \{v_p(y_i): 1 \leq i \leq m\}.$$
Moreover, $v_p(a+b)=\min \{v_p(a), v_p(b)\}$ if $v_p(a) \neq v_p(b)$.

The following fact from elementary number theory will be useful.

\begin{lem}\label{vps!}
Let $q$ be a power of a prime $p$. 
If $1 \leq s<q$, then $v_p(s!) \leq v_p(k(k-1)\cdots (k-s+1))$ for any integer $k$. If in addition there is $0 \leq i \leq s-1$ such that $q|(k-i)$, then the inequality is strict.
\end{lem}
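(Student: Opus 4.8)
The plan is to prove Lemma~\ref{vps!} by comparing $p$-adic valuations term by term, using the fact that among any $s$ consecutive integers the number divisible by a given prime power $p^j$ is governed by how $s$ compares to $p^j$. Recall the classical identity $v_p(s!) = \sum_{j \ge 1} \lfloor s/p^j \rfloor$; the heart of the argument is to show that $v_p\big(k(k-1)\cdots(k-s+1)\big) \ge \sum_{j \ge 1} \lfloor s/p^j \rfloor$ as well, and moreover that the binomial coefficient interpretation gives this for free.

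First I would observe that $k(k-1)\cdots(k-s+1) = s! \binom{k}{s}$ when $k \ge s \ge 0$, and that $\binom{k}{s}$ is an integer, so $v_p(s!) \le v_p\big(k(k-1)\cdots(k-s+1)\big)$ immediately in that case. For general integer $k$ (including negative values or $0 \le k < s$, where the product may be $0$ or have a different sign), I would note that the product of $s$ consecutive integers is always divisible by $s!$: either invoke the standard fact directly, or reduce to the positive case. If the product is $0$, then some factor $k-i$ vanishes, $v_p = +\infty$, and the inequality is trivial (and in fact strict, consistent with the second claim). Otherwise write the product up to sign as $\binom{|m|+s-1}{s} s!$ or similar and conclude integrality of the quotient. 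So the first (non-strict) assertion is essentially immediate from integrality of binomial coefficients together with a short case analysis on the sign/range of $k$.

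For the strict inequality when $q \mid (k-i)$ for some $0 \le i \le s-1$: here I would argue directly via Legendre-type counting. Write $v_p\big(k(k-1)\cdots(k-s+1)\big) = \sum_{i=0}^{s-1} v_p(k-i) = \sum_{j \ge 1} \#\{0 \le i \le s-1 : p^j \mid (k-i)\}$. For each $j$, the number of multiples of $p^j$ among $s$ consecutive integers is at least $\lfloor s/p^j \rfloor$, giving the non-strict bound again; but since $q = p^v \mid (k-i)$ for some $i$ in range (with $v = v_p(q)$, and $v \ge 1$, $p^v = q > s$), that specific index $i$ contributes to the count for every $j = 1, 2, \ldots, v$, whereas $\lfloor s/p^j \rfloor = 0$ for all $j$ with $p^j > s$ — in particular for $j = v$ since $p^v = q > s$. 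Thus the count for $j = v$ is at least $1 > 0 = \lfloor s/p^v \rfloor$, so at least one term in the sum is strictly larger than the corresponding term of $v_p(s!)$, yielding strictness.

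The main obstacle, such as it is, is bookkeeping: making sure the sign and range cases for $k$ are handled cleanly (negative $k$, $k$ in $\{0,\ldots,s-1\}$, and the zero-product case) and that the Legendre-counting argument for strictness correctly identifies a level $j$ where the extra multiple of $q$ forces the strict gain — the key numerical input being $q > s$, which guarantees $\lfloor s/q \rfloor = 0$ so that any single multiple of $q$ in the window is already "surplus." I do not expect any genuinely hard step; the lemma is elementary number theory and the proof should be short.
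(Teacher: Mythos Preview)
Your proposal is correct and follows essentially the same approach as the paper: both arguments reduce to Legendre-type counting of multiples of $p^j$ among $s$ consecutive integers, with the strictness coming from the single key observation that $q>s$ forces $\lfloor s/q\rfloor=0$ while the hypothesis supplies a multiple of $q$ in the window. The only cosmetic difference is that the paper packages both the non-strict and strict parts into the single identity $v_p\binom{k}{s}=\sum_j\big(\lfloor k/p^j\rfloor-\lfloor(k-s)/p^j\rfloor-\lfloor s/p^j\rfloor\big)$, whereas you invoke binomial integrality directly for the first part before switching to the counting formulation for strictness.
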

\begin{proof}
If $k-s+1 \leq 0 \leq k$, then we are done. So we can assume $k \geq s$. By Legendre's formula, we have
\begin{align*}
&v_p(k(k-1)\cdots (k-s+1))-v_p(s!)
=v_p(k!)-v_p((k-s)!)-v_p(s!)\\
&=\sum_{j=1}^{\infty} \bigg(\lfloor k/p^j \rfloor -\lfloor (k-s)/p^j \rfloor- \lfloor s/p^j \rfloor\bigg)
\geq \lfloor k/q \rfloor -\lfloor (k-s)/q \rfloor- \lfloor s/q \rfloor,
\end{align*}
where we used the fact that $\lfloor x \rfloor +\lfloor y \rfloor \leq \lfloor x+y \rfloor$ holds for all real numbers $x$ and $y$. Thus, $v_p(k(k-1)\cdots (k-s+1))\geq v_p(s!)$. If there is $0 \leq i \leq s-1$ such that $q|(k-i)$, then $\lfloor k/q \rfloor -\lfloor (k-s)/q \rfloor- \lfloor s/q \rfloor=1$ and thus $v_p(k(k-1)\cdots (k-s+1))> v_p(s!)$.
\end{proof}

We will need to compute the $p$-adic valuation of binomial coefficients. To do that, we recall a classical theorem of Kummer: let $a,b$ be non-negative integers, then $v_p(\binom{a+b}{a})$ is equal to the number of carries when $a$ is added to $b$ in base $p$. The following corollary is an immediate consequence of Kummer's theorem (alternatively one can use Lucas's theorem to derive the same fact):

\begin{cor}\label{nocarries}
If $x=(x_1,x_2,\ldots, x_k)_p$ and $y=(y_1,y_2,\ldots ,y_k)_p$ are written in their base-$p$ representations so that $0 \leq x_i,y_i \leq p-1$, then $p \nmid \binom{x}{y}$ if and only if $y_i \leq x_i$ for all $i$.
\end{cor}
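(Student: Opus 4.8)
The plan is to read off the statement directly from \emph{Lucas's theorem}, with Kummer's theorem (recalled just above) serving as an alternative route. First I would recall Lucas's theorem in the form: for a prime $p$ and non-negative integers $x=(x_1,\ldots,x_k)_p$ and $y=(y_1,\ldots,y_k)_p$ written in base $p$, one has
\[
\binom{x}{y}\equiv \prod_{i=1}^{k}\binom{x_i}{y_i}\pmod p .
\]
Since $p$ is prime, $\Z/p\Z$ is an integral domain, so the product on the right is nonzero modulo $p$ if and only if each factor $\binom{x_i}{y_i}$ is nonzero modulo $p$; this is the step that converts the global condition $p\nmid\binom{x}{y}$ into $k$ independent digit-wise conditions.

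Next I would analyze a single factor $\binom{x_i}{y_i}$ with $0\le x_i,y_i\le p-1$. If $y_i>x_i$, then $\binom{x_i}{y_i}=0$, hence $p\mid\binom{x_i}{y_i}$. If instead $y_i\le x_i$, then $\binom{x_i}{y_i}=x_i!/(y_i!\,(x_i-y_i)!)$ is a positive integer whose numerator $x_i!$ is a product of integers all strictly less than $p$; therefore $v_p(x_i!)=0$, and consequently $v_p\binom{x_i}{y_i}=0$, i.e.\ $p\nmid\binom{x_i}{y_i}$. Thus $p\nmid\binom{x_i}{y_i}$ precisely when $y_i\le x_i$. Combining this with the previous paragraph gives that $p\nmid\binom{x}{y}$ if and only if $y_i\le x_i$ for every $i$, which is the claim.

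As an alternative, I would note that the same conclusion follows from Kummer's theorem: $v_p\binom{x}{y}$ equals the number of carries occurring when $y$ is added to $x-y$ in base $p$, and this number is zero exactly when no borrow occurs in the base-$p$ subtraction computing $x-y$, i.e.\ exactly when $y_i\le x_i$ for all $i$. Since the argument is a routine application of these two standard facts, no real obstacle arises; the only point meriting a line of justification is the passage from the global non-divisibility statement to the digit-wise ones, which is immediate from the primality of $p$.
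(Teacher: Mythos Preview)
Your proposal is correct and matches the paper's own treatment: the paper does not give a proof but states that the corollary is an immediate consequence of Kummer's theorem, with Lucas's theorem as an alternative, and your argument simply spells out the Lucas route (with Kummer as the alternative). No gap.
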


Next, we apply this criterion to deduce Lemma~\ref{lem: closure}:

\begin{proof}[Proof of Lemma~\ref{lem: closure}] 
Let $q=p^k$ and let $L=\{a+1,\ldots, b\} \subseteq [q-1]$ with $s=b-a$. If $s=q-1$, then we must have $L=[q-1]$, which is already $q$-closed. Next we assume that $s<q-1$.
We write $s,a,b$ in base-$p$: 
$$
s=(s_1,s_2, \ldots, s_k)_p, a=(a_1,a_2, \ldots, a_k)_p, b=(b_1,b_2, \ldots, b_k)_p.
$$
Let $j$ be the smallest integer such that $s_j \neq p-1$. In other words, $s_1=s_2=\cdots=s_{j-1}=p-1$ and $s_j<p-1$. This forces $a_1=a_2=\cdots=a_{j-1}=0$,  $b_1=b_2=\cdots=b_{j-1}=p-1$, and $a_j \leq b_j$. Also note that for $k-v_p(s)+1 \leq i \leq k$, we have $s_i=0$ and thus $a_i=b_i$. 
Let 
$$
b'=(b_1', b_2', \ldots, b_k')_p,
$$
where $b_i'=\max\{a_i,b_i\}$ for each $1 \leq i \leq k$. Then it is clear that we have $p \nmid \binom{b'}{a}$ by Corollary~\ref{nocarries}. It follows that the interval $\{a+1, \ldots, b'\}$ is $q$-closed and has size
$$
b'-a=(b'-b)+s \leq s+\sum_{i=j+1}^{k-v_p(s)} (b_j'-b_j)p^{k-i} \leq s+\sum_{i=j+1}^{k-v_p(s)} (p-1)p^{k-i}=s+p^{k-j}-p^{v_p(s)}=\mu_q(s)
$$
since $b_i'=b_i$ for $1 \leq i \leq j$ and $k-v_p(s)+1 \leq i \leq k$. Thus, a $q$-closure of $L$ has size at most $\mu_q(s)$.
\end{proof}

\begin{rem}
It is easy to see that Lemma~\ref{lem: closure} is optimal. For example, if $q=p^2$ and $L=\{p\}$, then it is easy to verify that a $q$-closure of $L$ has size $p=\mu_{p^2}(1)$ from Corollary~\ref{nocarries}.

Given an interval $L \subseteq [q-1]$, it is easy to design an algorithm to find a $q$-closure of $L$ based on Corollary~\ref{nocarries}.
\end{rem}

\section{$q$-modular $L$-differencing Sperner systems}\label{sec: q-modular}

In this section, we derive upper bounds on $q$-modular $L$-differencing Sperner systems.

\subsection{Proof of Proposition~\ref{prop: p-adic}}\label{4.1}

We begin the section with the proof of Proposition~\ref{prop: p-adic}, which allows us to use a separating polynomial to upper bound the size of a $q$-modular $L$-Sperner system. The proof is inspired by the ideas in \cite{2007GC, 03Combinatorica, XL12}. Although some of the technical steps have appeared in previous works in the $p$-modular or the non-modular setting, we include a self-contained proof due to the additional delicate $p$-adic reasoning in the $q$-modular setting.

\begin{proof}[Proof of Proposition~\ref{prop: p-adic}]
Let $g$ be a degree-$d$ univariate polynomial that separates $0$ from $L+q\Z$. Let $\mathcal{F}=\{A_1,A_2,\ldots, A_m\}$. By relabelling the sets in $\mathcal{F}$, we may assume that $n \in A_i$ whenever $i>r$ and $n \notin A_i$ whenever $i \leq r$. For each $1 \leq i \leq m$, let $v^{(i)}$ be the characteristic vector of $A_i$ and define the polynomial 
\begin{equation}\label{gi}
g_{i}(x)=g(|A_{i}|-v^{(i)}\cdot x),    
\end{equation}
where $x=(x_1,x_2,\ldots, x_n)$. For each $1 \leq i \leq m$, let $p_i$ be the multilinear reduction of $g_i$. Note that for each $1 \leq i, j \leq m$, we have $p_i(v^{(j)})=g(|A_i \setminus A_j|)$.

We claim that $\{p_i\}_{i=1}^{m}$ are linearly independent over $\Q$. Suppose $\sum_{i=1}^m \alpha_i p_i=0$ with $\alpha_i$ not all zero. By scaling, we may assume that all $\alpha_i$ are integers and not all $\alpha_i$ are divisible by $p$. Suppose that $\alpha_j$ is not divisible by $p$; then by setting $x=v^{(j)}$, we get
$$
\alpha_j g(0)=\alpha_j p_j(v^{(j)})=-\sum_{i \neq j} \alpha_ip_i(v^{(j)})=-\sum_{i \neq j} \alpha_i g(|A_i \setminus A_j|).
$$
Note that whenever $i \neq j$, we have $|A_i \setminus A_j| \in L \pmod q$ and thus $v_p(g(|A_i \setminus A_j|)) >v_p(g(0))$ by the assumption.  Therefore, by the ultrametric inequality, $$v_p(\alpha_j g(0))\geq \min_{i \neq j} v_p(\alpha_i g(|A_i \setminus A_j|))>v_p(g(0)).$$ 
This implies that  $v_p(\alpha_j) \geq 1$, that is, $\alpha_j$ is divisible by $p$, a contradiction. This proves the claim. 

Note that $p_i$ has degree at most $d$ for each $1 \leq i \leq m$. Thus, $p_1,p_2, \ldots, p_m$ lie in the space of multilinear polynomials with degree at most $d$ in $n$ variable. By counting the dimension of the space, we conclude that $|\mathcal{F}|=m \leq \sum_{i=0}^d \binom{n}{i}$. This proves the first part of the proposition.

Next we assume in addition that $v_p(g(0))\leq v_p(g(u-1))$ for each $u \in L+q\Z$. For the other case, the proof is similar, and we shall explain how to modify the proof in the end.

Label the sets in $$\binom{[n-1]}{0}\sqcup\binom{[n-1]}{1}\sqcup\cdots\sqcup \binom{[n-1]}{d-1}$$ 
by $B_{i}$ for $i=1,2,\ldots,t=\sum\limits_{i=0}^{d-1}\binom{n-1}{i}$ such that $|B_{i}|\leq |B_{j}|$ for $i<j$. Let $w^{(i)}$ be
the characteristic vector of $B_{i}$ for each $i$. Let
\begin{equation*}
    I_{i}(x)=\prod\limits_{j\in B_{i}}x_{j}
\end{equation*}
for $i>1$, and $I_{1}(x)=1$. For $i=1,2,\ldots,t$, we define the polynomial $f_{i}$ as 
\begin{equation*}
    f_{i}(x)=(x_{n}-1)I_{i}(x).
\end{equation*}

Note that $f_i$ is multilinear and $f_i(w^{(i)}) \neq 0$ for each $i$, and $f_j(w^{(i)})=0$ for each $j>i$ since $B_j \not \subseteq B_i$. Thus, using the triangular criterion (see for example \cite[Proposition 2]{2007GC}),  $\{f_j\}_{j=1}^t$ are linearly independent over $\Q$.

Next we show that $\{p_i\}_{i=1}^r \cup \{f_j\}_{j=1}^t$ are linearly independent over $\Q$. Suppose otherwise that $\sum_{i=1}^r \alpha_i p_i +\sum_{j=1}^t \beta_j f_j=0$ for some coefficients that are not all zero; then not all $\alpha_i$ are zero, and not all $\beta_j$ are zero since we have shown that both families $\{p_i\}_{i=1}^r$ and $\{f_j\}_{j=1}^t$ are linearly independent over $\Q$. Note that for $i \leq r$, $n \notin A_i$ and thus 
$p_i$ does not depend on the variable $x_n$ in view of equation~\eqref{gi}. By setting $x_n=1$, we have $f_j(x)=0$ for each $1 \leq j \leq t$, which implies that $\sum_{i=1}^r \alpha_i p_i=0$, a contradiction. 

Finally we show that $\{p_i\}_{i=1}^{m} \cup \{f_j\}_{j=1}^t$ are linearly independent over $\Q$. Suppose otherwise that
\begin{equation}\label{=0}
\sum_{i=1}^m \alpha_i p_i +\sum_{j=1}^t \beta_j f_j=0    
\end{equation} with coefficients $\alpha_i, \beta_j$ being integers that are not all multiples of $p$. Then we must have $\alpha_k \neq 0$ for some $k>r$ since we have shown that $\{p_i\}_{i=1}^r \cup \{f_j\}_{j=1}^t$ are linearly independent. 

For each $k>r$ such that $\alpha_k \neq 0$, we have $n \in A_k$ and thus $v^{(k)}_n=1$. Therefore, by setting $x=v^{(k)}$ in equation~\eqref{=0}, we have $f_j(v^{(k)})=0$ for each $1 \leq j \leq t$ and thus
$$
\sum_{i=1}^m \alpha_i p_i(v^{(k)})+\sum_{j=1}^t \beta_j f_j(v^{(k)})=0 \implies \alpha_k p_k(v^{(k)})=-\sum_{i \neq k} \alpha_i p_i(v^{(k)}).
$$
Similar to the proof for the first part of the proposition, we must have $p \mid \alpha_k$.

For each $k\leq r$ such that $\alpha_k \neq 0$, we have $n \notin A_k$ and thus $v^{(k)}_n=0$. Recall that $p_k$ does not depend on the variable $x_n$. Let $u^{(k)}$ be the characteristic vector for $A_k \cup \{n\}$. We have $p_k(u^{(k)})=p_k(v^{(k)})=g(0)$ and $f_j(u^{(k)})=0$ for $1 \leq j \leq t$. 
By setting $x=u^{(k)}$ in equation~\eqref{=0}, we obtain that
$$
\alpha_k g(0)=\alpha_k p_k(u^{(k)})=-\sum_{i \neq k} \alpha_i p_i(u^{(k)})=-\sum_{i \neq k, i\leq r} \alpha_ig(|A_i \setminus A_k|)-\sum_{i>r} \alpha_ig(|A_i \setminus A_k|-1)
$$
(when $i\leq r$, note that $A_i \setminus (A_k \cup \{n\})=A_i \setminus A_k$ since $n \notin A_i$; when $i>r$, $|A_i \setminus (A_k \cup \{n\})|=|A_i \setminus A_k|-1$ since $n \in A_i$.)
For the right-hand side of the above equation, we have:
\begin{itemize}
    \item If $i<r$ and $i \neq k$, then $|A_i \setminus A_k| \in L \pmod q$ and thus $v_p(g(|A_i \setminus A_k|))>v_p(g(0))$.
    \item If $i>r$, then $i>k$ and thus $|A_i \setminus A_k| \in L \pmod q$. It follows that $v_p(g(|A_i \setminus A_k|-1))\geq v_p(g(0))$ by our assumption. Note that we have shown that $p \mid \alpha_i$ since $i>r$, so we still have $v_p(\alpha_ig(|A_i \setminus A_k|-1))=v_p(\alpha_i)+v_p(g(|A_i \setminus A_k|-1))\geq v_p(\alpha_i)+v_p(g(0))>v_p(g(0))$.
\end{itemize}
It follows from the ultrametric inequality that $v_p(\alpha_k g(0))>v_p(g(0))$, which implies that $p \mid \alpha_k$.

To conclude, we have deduced that $p \mid \alpha_i$ for all $1 \leq i \leq m$. Now
using the fact that $f_k(w^{(j)})=0$ for each $k>j$, by setting $x=w^{(j)}$ in equation~\eqref{=0} inductively on $j$, we can deduce that $p \mid \beta_j$ for each $1 \leq j \leq t$. Thus, all coefficients are multiples of $p$, contradicting our assumption. This establishes the linear independence of $\{p_i\}_{i=1}^m \cup \{f_j\}_{j=1}^t$. Note that these polynomials all lie in the space of multilinear polynomials in $n$ variables with degree at most $d$. By counting the dimension, we conclude that 
$$|\mathcal{F}|=m \leq \sum_{i=0}^d \binom{n}{i}-t=\sum_{i=0}^d \binom{n}{i}-\sum_{i=0}^{d-1} \binom{n-1}{i}=\sum_{i=0}^d \binom{n-1}{i}.
$$

Finally we briefly explain how to modify the proof if instead we have $v_p(g(0))\leq v_p(g(u+1))$ for each $u \in L+q\Z$. Let $\widetilde{f_j}=x_nI_{j}$ for each $1 \leq j \leq t$. It suffices to show $\{p_i\}_{i=1}^m \cup \{\widetilde{f_j}\}_{j=1}^{t}$ are linearly independent over $\Z$.  Suppose $\sum_{i=1}^m \alpha_i p_i +\sum_{j=1}^t \beta_j \widetilde{f_j}=0$ with coefficients not all multiples of $p$. Using similar arguments, for each $k \leq r$ with $\alpha_k \neq 0$, by setting $x=v^{(k)}$, we can show that $p \mid \alpha_k$; for each $k>r$ with $\alpha_k \neq 0$, by setting $x=\widetilde{u^{(k)}}$ (the characteristic vector for $A_k \setminus \{n\}$), we can show that $p \mid \alpha_k$. And finally, using the same argument, we can show $p \mid \beta_j$ for each $j$. To conclude the upper bound on $|\mathcal{F}|$, we use the same dimension counting argument.
\end{proof}

The readers are encouraged to jump to the proof of Theorem~\ref{[s] sym} at this point, where we use the same notations and refer to a few steps in the above proof, despite that Theorem~\ref{[s] sym} is about a non-modular version. For example, we will use the linear independence of $\{p_i\}_{i=1}^m \cup \{f_j\}_{j=1}^t$. The readers are also encouraged to glance at Section~\ref{sec:symmdiff} for a variant of Proposition~\ref{prop: p-adic}.

\subsection{Consequences of Proposition~\ref{prop: p-adic} and proof of Theorem~\ref{bchooses}}\label{4.2}

Throughout the section, we consider the following natural choice of the separating polynomial:
\begin{equation}\label{g}
g(y)=\prod_{\ell \in L}(y-\ell).   
\end{equation}
We will show that under extra assumptions on $L$, $g$ is indeed a separating polynomial and thus Proposition~\ref{prop: p-adic} can be applied to derive upper bounds on $q$-modular $L$-differencing Sperner systems.

As a quick application of Proposition~\ref{prop: p-adic}, we recover Theorem~\ref{LL}.
\begin{proof}[Proof of Theorem~\ref{LL}]
It suffices to show that the polynomial $g$ (defined in equation~\eqref{g}) satisfies the two assumptions in the statement of Proposition~\ref{prop: p-adic}. Note that $v_p(g(0))=0$ since no element in $L$ is a multiple of $p$. It follows that $v_p(g(u)) \geq 0=v_p(g(0))$ holds for each integer $u$. Moreover, if $u \in L+q\Z$, then there is some $\ell_0 \in L$ such that $q \mid (u-\ell_0)$ and thus $v_p(g(u)) \geq v_p(u-\ell_0) \geq v_p(q)>0=v_p(g(0))$.
\end{proof}

The following corollary can be regarded as a generalization of Theorem~\ref{LL}.

\begin{cor}\label{<k}
Let $q=p^k$ and let $L \subseteq [q-1]$. Let $\mathcal{F}\subseteq 2^{[n]}$ be a $q$-modular $L$-differencing Sperner system. If $\sum_{\ell \in L} v_p (\ell)<k$, then 
$$
  |\mathcal{F}| \leq \sum_{i=0}^{|L|} \binom{n}{i}.
$$
\end{cor}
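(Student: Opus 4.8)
The plan is to apply the first part of Proposition~\ref{prop: p-adic} with the natural separating polynomial $g(y)=\prod_{\ell \in L}(y-\ell)$ from equation~\eqref{g}, so that $d=|L|$ and the claimed bound $\sum_{i=0}^{|L|}\binom{n}{i}$ follows immediately. The only thing to verify is the separation hypothesis: $v_p(g(0))<v_p(g(u))$ for every $u\in L+q\Z$. Here $v_p(g(0))=\sum_{\ell\in L}v_p(\ell)$, which is the quantity we have assumed is strictly less than $k$.

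First I would bound $v_p(g(u))$ from below. Fix $u\in L+q\Z$, so $u\equiv \ell_0\pmod q$ for some $\ell_0\in L$, i.e.\ $q=p^k\mid (u-\ell_0)$. Then the factor $(u-\ell_0)$ of $g(u)=\prod_{\ell\in L}(u-\ell)$ already contributes valuation at least $k$, so $v_p(g(u))\ge v_p(u-\ell_0)\ge k$. Combining, $v_p(g(u))\ge k>\sum_{\ell\in L}v_p(\ell)=v_p(g(0))$, which is exactly the strict inequality required in the hypothesis of the first part of Proposition~\ref{prop: p-adic}. (One should note $g\in\Z[y]$ has degree $|L|$ and $v_p(g(0))$ is finite since $0\notin L$, so $g$ genuinely ``separates'' in the sense defined in the excerpt.)

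There is essentially no obstacle here: the argument is a two-line valuation estimate, mirroring exactly the proof of Theorem~\ref{LL} given above, where the special case $\sum_{\ell\in L}v_p(\ell)=0$ (no $\ell\in L$ divisible by $p$) was handled. The present corollary just observes that the same polynomial works as long as the total valuation $v_p(g(0))$ stays below $k=v_p(q)$, since the ``wrapped-around'' factor $(u-\ell_0)$ always supplies at least $k$. So the write-up is short: invoke Proposition~\ref{prop: p-adic}, compute $v_p(g(0))=\sum_{\ell\in L}v_p(\ell)<k$, and bound $v_p(g(u))\ge k$ for $u\in L+q\Z$ as above. If anything deserves a sentence of care, it is simply remarking that distinct $A,B\in\mathcal F$ give $|A\setminus B|\in L\pmod q$, hence $|A\setminus B|\in L+q\Z$, so the separation of $0$ from $L+q\Z$ is precisely what the proposition needs.
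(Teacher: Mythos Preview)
Your proposal is correct and follows essentially the same approach as the paper: apply the first part of Proposition~\ref{prop: p-adic} with $g(y)=\prod_{\ell\in L}(y-\ell)$, compute $v_p(g(0))=\sum_{\ell\in L}v_p(\ell)<k$, and observe that for $u\in L+q\Z$ the factor $(u-\ell_0)$ with $q\mid(u-\ell_0)$ gives $v_p(g(u))\ge k>v_p(g(0))$.
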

\begin{proof}
It suffices to show that the polynomial $g$ satisfies the first assumption in the statement of Proposition~\ref{prop: p-adic}. Note that $v_p(g(0))=\sum_{\ell \in L} v_p(-\ell)=\sum_{\ell \in L} v_p(\ell)$. If $u \in L+q\Z$, then there is some $\ell_0 \in L$ such that $q \mid (u-\ell_0)$ and thus $v_p(g(u)) \geq v_p(u-\ell_0) \geq v_p(q)=k>v_p(g(0))$.
\end{proof}

\begin{rem}
Let $q=p^k$ and let $\mathcal{F}\subseteq 2^{[n]}$ be a $q$-modular $L$-differencing Sperner system. Note that if $L$ does not contain a multiple of $p^{k-1}$, then $\mathcal{F}\subseteq 2^{[n]}$ is a $p^{k-1}$-modular $L'$-differencing Sperner system, where $L'=\{ 1 \leq \ell <p^{k-1}: \ell \in L \pmod {p^{k-1}}\}$. Thus, the only non-degenerate case where the above corollary can be applied is that $L$ contains exactly one multiple of $p$, which is a multiple of $p^{k-1}$.
\end{rem}

Next, we use Proposition~\ref{prop: p-adic} to deduce Theorem~\ref{bchooses}, which gives a simple sufficient condition for the upper bound~\eqref{n-1i} to hold when $L$ is an interval.

\begin{proof}[Proof of Theorem~\ref{bchooses}]
It suffices to show that the polynomial $g$ satisfies the two assumptions in the statement of Proposition~\ref{prop: p-adic}. Note that $$v_p(g(0))=v_p(b(b-1) \cdots (b-s+1))=v_p\bigg(s!\binom{b}{s}\bigg)=v_p(s!)+v_p\bigg(\binom{b}{s}\bigg)=v_p(s!)$$ since $p \nmid \binom{b}{s}$. Thus, by Lemma~\ref{vps!}, $$v_p(g(u))=v_p((u-b+s-1)(u-b+s-2)\cdots (u-b)) \geq v_p(s!)=v_p(g(0))$$
for each integer $u$; moreover, if $u \in L+q\Z$, then $u \equiv \ell \pmod q$ for some $\ell \in L$ and thus we have $v_p(g(u))>v_p(s!)=v_p(g(0))$.
\end{proof}

Next, we use Theorem~\ref{bchooses} to show that there are many intervals $L$ for which upper bounds of the form~\eqref{n-1i} hold. 

\begin{thm}\label{lotsofL}
Given $n$ and a prime power $q=p^k$ with $k \geq 2$. There are at least $p^k(p-1)^k/2^k-q$ intervals $L \subseteq [q-1]$ such that the maximum size of $q$-modular $L$-differencing Sperner systems $\mathcal{F}\subseteq 2^{[n]}$ is at most $\sum_{i=0}^{|L|} \binom{n-1}{i}$.
\end{thm}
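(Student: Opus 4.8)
The plan is to feed as many intervals as possible into Theorem~\ref{bchooses} and count them. By Theorem~\ref{bchooses}, if $L=\{b-s+1,b-s+2,\ldots,b\}\subseteq[q-1]$ is an interval with $1\le s\le b\le q-1$ and $p\nmid\binom{b}{s}$, then every $q$-modular $L$-differencing Sperner system $\mathcal{F}\subseteq 2^{[n]}$ satisfies $|\mathcal{F}|\le\sum_{i=0}^{|L|}\binom{n-1}{i}$. Since the pair $(b,s)$ is recovered from such an interval $L$ as its largest element and its size, distinct admissible pairs yield distinct intervals. Hence it suffices to show that the number of integer pairs $(b,s)$ with $1\le s\le b\le q-1$ and $p\nmid\binom{b}{s}$ is at least $p^k(p-1)^k/2^k-q$.

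To count these pairs, first relax the condition to $0\le s\le b\le q-1$ and count all such pairs with $p\nmid\binom{b}{s}$; at the very end subtract the $q$ pairs with $s=0$. Writing $b=(b_1,\ldots,b_k)_p$ and $s=(s_1,\ldots,s_k)_p$ in base $p$, Corollary~\ref{nocarries} says that $p\nmid\binom{b}{s}$ if and only if $s_i\le b_i$ for every $i$; this digit-wise domination automatically forces $s\le b$ as integers, so the inequality $s\le b$ needs no separate verification. Consequently such pairs $(b,s)$ are in bijection with sequences $((b_1,s_1),\ldots,(b_k,s_k))$ in which $0\le s_i\le b_i\le p-1$ for each coordinate $i$, and the number of choices for a single coordinate is $\sum_{j=0}^{p-1}(j+1)=\frac{p(p+1)}{2}$. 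Therefore the total count is $\left(\frac{p(p+1)}{2}\right)^k$, and after removing the $p^k=q$ pairs with $s=0$ we obtain exactly $\left(\frac{p(p+1)}{2}\right)^k-q$ admissible intervals.

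To conclude, I would use $\left(\frac{p(p+1)}{2}\right)^k\ge\left(\frac{p(p-1)}{2}\right)^k=\frac{p^k(p-1)^k}{2^k}$, which gives the claimed bound (indeed a stronger one). There is no serious obstacle: the two points that need a little care are observing that distinct pairs $(b,s)$ give distinct intervals, so that the count is not an over-count, and translating the non-divisibility condition $p\nmid\binom{b}{s}$ correctly into the digit inequalities $s_i\le b_i$, which is exactly Corollary~\ref{nocarries} (a consequence of Kummer's theorem). The degenerate case $s=0$ is the only place an off-by-$q$ slip could occur, which is why the displayed bound carries the $-q$ term.
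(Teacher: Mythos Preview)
Your proof is correct and follows essentially the same approach as the paper: invoke Theorem~\ref{bchooses}, biject admissible intervals with pairs $(b,s)$, and count these pairs via the digit criterion of Corollary~\ref{nocarries}, subtracting off the $q$ pairs with $s=0$. Your count $\bigl(\tfrac{p(p+1)}{2}\bigr)^k$ is in fact the exact value of what the paper calls $N'$ (the paper's displayed value $\bigl(\tfrac{p(p-1)}{2}\bigr)^k$ is a slip in the final arithmetic step, since $\sum_{x=0}^{p-1}(p-x)=\tfrac{p(p+1)}{2}$), so your final inequality legitimately recovers the stated bound.
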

\begin{proof}
Let $N:=\#\{ (b,s): 1 \leq s\leq b<q, \quad p \nmid \binom{b}{s}\}$. Since each such pair $(b,s)$ gives rise to a desired interval $L=\{b-s+1, b-s+2,\ldots, b\}$, in view of Theorem~\ref{bchooses}, it then suffices to estimate the value $N$. Moreover, note that if $s>b$, then $\binom{b}{s}=0$ so that $p \mid \binom{b}{s}$; if $s=0$, then $\binom{b}{s}=1$ for any $0 \leq b<q$. Thus, $N=N'-q$, where 
$$
N'=\#\bigg\{ (b,s): 0 \leq s,b<q, \quad p \nmid \binom{b}{s}\bigg\}.
$$
For each $0 \leq s<q$, we write $s=(x_1x_2\ldots x_k)$ in its base-$p$ representation. By Corollary~\ref{nocarries}, the number of $b$ such that $0\leq b<q$ and $p \nmid \binom{b}{s}$ is $(p-x_1)(p-x_2)\cdots (p-x_k)$, which is the contribution of $s$ to $N'$. It follows that  
$$
N'=\sum_{\substack{0 \leq x_j \leq p-1\\ 1 \leq j \leq k}} \prod_{j=1}^{k} (p-x_j)=\prod_{j=1}^k \bigg(\sum_{x_j=0}^{p-1} (p-x_j)\bigg)=\bigg(\frac{p(p-1)}{2}\bigg)^k
$$
and $N=N'-q \approx q^2/2^k$.
\end{proof}

\begin{rem}
If $L \subseteq \{1,2,\ldots, q-1\}$ is an interval that does not contain a multiple of $p$ and 
$\mathcal{F}\subseteq 2^{[n]}$ such that $|A \setminus B| \in L \pmod q$ for distinct $A,B \in \mathcal{F}$. Then we can define 
$$
K=\{1\leq k \leq p-1: k \in L \pmod q\}
$$
such that $|K| \leq |L|$ and $|A \setminus B| \in K \pmod p$ for distinct $A,B \in \mathcal{F}$. In this case, we shall instead apply Theorem~\ref{LL}. However, the number of such intervals $L$ is bounded by $pq=o(q^2/2^k)$, so Theorem~\ref{lotsofL} still shows that there are many $L$ for which Theorem~\ref{LL} does not apply, and yet upper bounds of the form~\eqref{n-1i} still hold.
\end{rem}

Next, we prove Theorem~\ref{thm: AP}, which generalizes Theorem~\ref{bchooses} since it allows $L$ to be an arithmetic progression.

\begin{proof}[Proof of Theorem~\ref{thm: AP}]
It suffices to show that the polynomial $g$ satisfies the first assumption in the statement of Proposition~\ref{prop: p-adic}. Note that $v_p(g(0))=\sum_{\ell \in L} v_p(\ell)$. 

Let $u \in L+q\Z$; we need to show that $v_p(g(u))\geq \max\{(s-1)v_p(d)+v_p(q), sv_p(d)+ v_p(s!)+1\}$. Note that then $u \equiv \ell_0 \pmod q$ for some $\ell_0 \in L$ and thus $v_p(u-\ell)=v_p((u-\ell_0)+(\ell_0-\ell)) \geq v_p(\ell_0-\ell)\geq v_p(d)$ for each $\ell \in L$. It follows that $v_p(g(u))=\sum_{\ell \in L} v_p(u-\ell) \geq (s-1)v_p(d)+v_p(q)$. 

On the other hand, set $d=d'p^t$, where $t=v_p(d)$. Set $\ell_0=a+k_0d$. If $\ell=a+kd \in L$ such that $k \neq k_0$, then we have
\begin{align*}
v_p(u-\ell)
&=v_p((u-\ell_0)+(\ell_0-\ell))=v_p(\ell_0-\ell)\\
&=v_p((k-k_0)d)=t+v_p(k-k_0)=t+v_p((u-\ell_0)/p^t+k-k_0).
\end{align*}
Note that we also have $v_p(u-\ell_0)=t+v_p((u-\ell_0)/p^t)$.
It follows that
$$
v_p(g(u))=\sum_{\ell \in L} v_p(u-\ell)=st+\sum_{k=0}^{s-1} v_p\big((u-\ell_0)/p^t+k-k_0\big).
$$
Thus, by applying Lemma~\ref{vps!} (with $q/p^t$ being the modulus), we conclude that
$v_p(g(u))>sv_p(d)+v_p(s!)$.
\end{proof}

\begin{rem} 
If $L \subseteq [q-1]$ is contained in the arithmetic progression $a+d\Z$, one can follow the proof of the first part of Theorem~\ref{thm: AP} to see: if $\sum_{\ell \in L} v_p(\ell)<(|L|-1)v_p(d)+v_p(q)$, then $|\mathcal{F}|\leq \sum_{i=0}^{|L|}\binom{n}{i}$ for each $q$-modular $L$-differencing Sperner system $\mathcal{F}$ in $2^{[n]}$. 
\end{rem}

In the following example, we illustrate some special cases for which Theorem~\ref{thm: AP} can be applied:

\begin{ex}\label{ex: AP}
Let $L \subseteq [q-1]$ be an arithmetic progression $\{a,a+d,\ldots, a+(s-1)d\}$. In each of the following cases we have $\sum_{\ell \in L} v_p(\ell)<\max\{(s-1)v_p(d)+v_p(q), sv_p(d)+ v_p(s!)+1\}$ so that Theorem~\ref{thm: AP} applies to $q$-modular $L$-differencing Sperner systems.
\begin{itemize}
    \item $v_p(a)<v_p(d)$. In this case we have $v_p(a+kd)=v_p(a)$ for each $0 \leq k \leq s-1$ and thus $\sum_{\ell \in L} v_p(\ell)=sv_p(a)<sv_p(d)<(s-1)v_p(d)+v_p(q)$.
    \item $d \mid a$ and $p \nmid \binom{a/d+s-1}{s}$ (in particular, if $L$ is a homogeneous arithmetic progression of the form $\{d,2d, \ldots, sd\}$). In this case, we set $a=a'd$. Note that we have $v_p(a+kd)=v_p(d)+v_p(a'+k)$ for each $0 \leq k \leq s-1$. Thus, the condition $p \nmid \binom{a'+s-1}{s}$ implies that 
    $$
    \sum_{\ell \in L} v_p(\ell)=sv_p(d)+v_p\big(a'(a'+1)\cdots (a'+s-1)\big)=sv_p(d)+v_p(s!)<sv_p(d)+v_p(s!)+1.
    $$
\end{itemize}
\end{ex}

We finish the section by presenting the proof of Theorem~\ref{thm: closure}.

\begin{proof}[Proof of Theorem~\ref{thm: closure}]
The upper bound $|\mathcal{F}| \leq \sum_{i=0}^{2^{s-1}} \binom{n}{i}$ follows from Theorem~\ref{2^s}.

By Lemma~\ref{lem: closure}, we can find an interval $L' \subseteq [q-1]$ such that $L'$ is a $q$-closure of $L$ and $|L'|\leq \mu_q(s)$. Since $\mathcal{F}$ is $q$-modular $L$-differencing Sperner, it is also $q$-modular $L'$-differencing Sperner. Thus, we can apply Theorem~\ref{bchooses} to deduce that
$$
  |\mathcal{F}| \leq \sum_{i=0}^{|L'|} \binom{n-1}{i} \leq \sum_{i=0}^{\mu_q(s)} \binom{n-1}{i}.
$$

Next, we work on the case $q=p^2$. If $s+p-1 \leq 2s-1$, that is, $s \geq p$, then we are already done since $\mu_{p^2}(s) \leq s+p-1$. If $s<p$, then there is at most one element in $L$ being a multiple of $p$. Moreover, since $L \subseteq [q-1]$, we have $v_p(\ell)<v_p(q)$ for each $\ell \in L$. It follows that $\sum_{\ell \in L} v_p(\ell)<v_p(q)$ and the upper bound on $|\mathcal{F}|$ follows from Corollary~\ref{<k} .
\end{proof}

\begin{rem}
If $L$ is not an interval, then we can still apply Theorem~\ref{bchooses} by first finding a $q$-closure of $L$. Note that if $L$ is contained in an arithmetic progression, then one can instead first consider the ``closure" of $L$ with respect to that arithmetic progression and then apply Theorem~\ref{thm: AP}.

In this way, we can derive an upper bound on $|\mathcal{F}|$ (which would possibly depend on the arithmetic structure of $L$, in particular, the diameter of $L$), which provides a significant improvement on Theorem~\ref{2^s} if $|L|$ is much larger than $\log_2 q$, which is typically the case since $L \subseteq [q-1]$.
\end{rem}

\section{Proof of Theorem~\ref{[s] sym} and Theorem~\ref{thm: [s]}}\label{sec: push}
In this section, we combine the ``push-to-the-middle" idea and the linear algebra method to obtain new bounds on $L$-differencing Sperner systems and $L$-close Sperner systems. In the proofs, we will borrow some ideas from \cite{ABS91}.

We first prove Theorem~\ref{[s] sym}, which is a refined version of Theorem~\ref{bchooses} in the non-modular setting. 

\begin{proof}[Proof of Theorem~\ref{[s] sym}]
Let $\mathcal{F}=\{A_1,A_2, \ldots, A_m\} \subseteq 2^{[n]}$ be an $L$-differencing Sperner system. By Lemma~\ref{cor: middle}, we may assume that $s \leq |A_i| \leq n-s$ for each $1 \leq i \leq m$. Let $p>n$ be a prime. Note that $\mathcal{F}$ is also a $p$-modular $L$-differencing Sperner system with $L=\{1,2,\ldots, s\}$. Thus, from the proof of Theorem~\ref{bchooses}, the polynomial $g(y)=\prod_{\ell \in L}(y-\ell)$ satisfies the two assumptions in the statement of Proposition~\ref{prop: p-adic}. Thus, from the proof of Proposition~\ref{prop: p-adic}, we know that $\{p_i\}_{i=1}^m \cup \{f_j\}_{j=1}^t$ are linearly independent over $\Q$, where we follow all the notations in the proof of Proposition~\ref{prop: p-adic}. Note that these notations are independent of the choice of the prime $p$.

Let 
$$
Q(x)=\prod_{k=s-1}^{n-s} \bigg(\sum_{j=1}^{n-1} x_j -k\bigg).
$$
Label the sets in $$\binom{[n-1]}{0}\sqcup\binom{[n-1]}{1}\sqcup\cdots\sqcup \binom{[n-1]}{3s-n-2}$$ by $C_{i}$ for $i=1,2,\ldots,T=\sum\limits_{i=0}^{3s-n-2}\binom{n-1}{i}$ such that $|C_{i}|\leq |C_{j}|$ for $i<j$. For each $i$, let $z^{(i)}$ be
the characteristic vector of $C_{i}$ and define $h_i(x)$ to be the multilinear reduction of the polynomial
\begin{equation*}
    Q(x) \cdot \prod\limits_{j\in C_{i}}x_{j}
\end{equation*}
Note that each $h_i$ is a polynomial with degree at most $3s-n-2+(n-s)-s+2=s$. Moreover, note that for each $i$, $h_i(z^{(i)}) \neq 0$ since $|C_i| \leq 3s-n-2<s-1$, and $h_j(z^{(i)})=0$ for each $j>i$ since $C_j \not \subseteq C_i$. Thus, using the triangular criterion, $\{h_i\}_{i=1}^T$ are linearly independent over $\Q$. 

Next we show that $\{p_i\}_{i=1}^m \cup \{f_j\}_{j=1}^t \cup \{h_k\}_{k=1}^T$ are linearly independent over $\Q$. Suppose otherwise that
\begin{equation}\label{=000}
\sum_{i=1}^m \alpha_i p_i +\sum_{j=1}^t \beta_j f_j + \sum_{k=1}^T \gamma_k h_k=0    
\end{equation} 
for integer coefficients that are not all zero (again, the coefficients are independent of the choice of the prime $p$). Note that for each $1 \leq i \leq m$ and $1 \leq k \leq T$, we have $h_k(v^{(i)})=0$ since $s \leq |A_i| \leq n-s$ implies that $s-1 \leq \sum_{j=1}^{n-1} x_j \leq n-s$. Similarly, for each $1 \leq i \leq m$ and $1 \leq k \leq T$, we have $h_k(u^{(i)})=0$. It follows from equation~\eqref{=000} that
\begin{equation*}
\sum_{i=1}^m \alpha_i p_i(x) +\sum_{j=1}^t \beta_j f_j(x)=0    
\end{equation*} 
holds for $x=v^{(1)}, u^{(1)}, \ldots, v^{(m)}, u^{(m)}$. It follows from the proof of Proposition~\ref{prop: p-adic} that $p \mid \alpha_i$ for all $i$. By taking $p$ to be a sufficiently large prime, we must have $\alpha_i=0$ for all $i$. Therefore, equation~\eqref{=000} reduces to 
\begin{equation}\label{00}
0=\sum_{j=1}^t \beta_j f_j + \sum_{k=1}^T \gamma_k h_k= (x_n-1) \sum_{j=1}^t \beta_j I_j+ \sum_{k=1}^T \gamma_k h_k,
\end{equation} 
where $I_{j}$ is the same as in the proof of Proposition~\ref{prop: p-adic}. Note that $I_j$ and $h_k$ are independent of the variable $x_n$. Setting $x_n=1$ in equation~\eqref{00}, we obtain that $\sum_{k=1}^T \gamma_k h_k=0$; setting $x_n=0$ in equation~\eqref{00}, we obtain that $\sum_{j=1}^t \beta_j I_j=0$. Therefore, $\beta_j=0$ and $\gamma_k=0$ for all $j,k$, since we have shown that $\{I_j\}_{j=1}^t$ are linearly independent, and $\{h_k\}_{k=1}^T$ are linearly independent. 

We have established the linear independence of $\{p_i\}_{i=1}^m \cup \{f_j\}_{j=1}^t \cup \{h_k\}_{k=1}^T$. Note that these polynomials all lie in the space of multilinear polynomials in $n$ variables with degree at most $s$. By counting the dimension, we conclude that 
\[|\mathcal{F}|=m \leq \sum_{i=0}^s \binom{n-1}{i}-T= \sum_{i=0}^s \binom{n-1}{i}- \sum_{i=0}^{3s-n-2} \binom{n-1}{i}=\sum_{i=3s-n-1}^s \binom{n-1}{i}.\qedhere
\]
\end{proof}

Next, we use a similar strategy to prove Theorem~\ref{thm: [s]}.

\begin{proof}[Proof of Theorem~\ref{thm: [s]}]
Let $\mathcal{F}=\{A_1,A_2, \ldots, A_m\} \subseteq 2^{[n]}$ be an $L$-close Sperner system. By Lemma~\ref{cor: middle}, we may assume that $s \leq |A_i| \leq n-s$ for each $1 \leq i \leq m$. By relabeling, we may further assume that $|A_1| \geq |A_2| \geq \cdots \geq |A_m|$. For each $1 \leq i \leq m$, let $v^{(i)}$ be the characteristic vector of $A_i$ and define  $p_i$ to be the multilinear reduction of the polynomial
$$
\prod_{\ell \in L} \big(|A_i|- v^{(i)} \cdot x -\ell\big).
$$
Following \cite[Section 2]{2021GC}, a key observation is that $sd(F,G)=\min\{|F\setminus G|,|G\setminus F|\}=|F\setminus G|$ if and only if $|F|\leq |G|$. It follows that $p_i(v^{(i)}) \neq 0$ for each $i$ and $p_j(v^{(i)})=0$ for $j>i$. 

%First we work on the case $s \geq (n+1)/3$. 
Let 
$$
Q(x)=\prod_{k=s}^{n-s} \bigg(\sum_{j=1}^{n} x_j -k\bigg).
$$
Label the sets in $$\binom{[n]}{0}\sqcup\binom{[n]}{1}\sqcup\cdots\sqcup \binom{[n]}{3s-n-1}$$ by $B_{i}$ for $i=1,2,\ldots,t=\sum\limits_{i=0}^{3s-n-1}\binom{n}{i}$ such that $|B_{i}|\leq |B_{j}|$ for $i<j$. For each $i$, let $w^{(i)}$ be
the characteristic vector of $B_{i}$ and define $f_i(x)$ to be the multilinear reduction of the polynomial
\begin{equation*}
    Q(x) \cdot \prod\limits_{j\in B_{i}}x_{j}
\end{equation*}
Note that each $f_i$ is a polynomial with degree at most $3s-n-1+(n-s)-s+1=s$. Moreover, note that for each $i$, $f_i(w^{(i)}) \neq 0$ since $|B_i| \leq 3s-n-1<s$, and $f_j(w^{(i)})=0$ for each $j>i$ since $B_j \not \subseteq B_i$. Thus, using the triangular criterion, $\{f_i\}_{i=1}^t$ are linearly independent over $\Q$. 

Next we show that $\{p_i\}_{i=1}^{m} \cup \{f_j\}_{j=1}^t$ are linearly independent over $\Q$. Suppose otherwise that
\begin{equation}\label{=00}
\sum_{i=1}^m \alpha_i p_i +\sum_{j=1}^t \beta_j f_j=0    
\end{equation} 
for some coefficients that are not all zero. Since we have shown that $\{f_j\}_{j=1}^t$ are linearly independent, not all $\alpha_i$ are zero. Let $k$ be the smallest integer such that $\alpha_k \neq 0$. Setting $x=v^{(k)}$ in equation~\eqref{=00}, we get $\alpha_k p_k(v^{(k)})+\sum_{j=1}^t \beta_j f_j(v^{(k)})=0$. Observe that $s \leq \sum_{j=1}^{n} x_j \leq n-s$ since $s \leq |A_k| \leq n-s$, and thus $f_j(v^{(k)})=0$. It follows that $\alpha_k=0$, which violates the assumption. 

Note that $\{p_i\}_{i=1}^{m} \cup \{f_j\}_{j=1}^t$ all lie in the space of multilinear polynomials in $n$ variables with degree at most $s$. By counting the dimension, we conclude that 
\[|\mathcal{F}|=m \leq \sum_{i=0}^s \binom{n}{i}-t=\sum_{i=0}^s \binom{n}{i}-\sum_{i=0}^{3s-n-1} \binom{n}{i}=\sum_{i=3s-n}^s \binom{n-1}{i}.\qedhere
\]
\end{proof}

\section{Applications to intersecting systems and set systems with restricted symmetric differences}\label{sec: Hamming}

\subsection{New upper bounds on $q$-modular $L$-avoiding $L$-intersecting systems}\label{sec:intersecting} 
In this subsection, we show how our arguments for Sperner systems can be modified to deduce improved upper bounds on intersecting systems. We remark that a weaker upper bound (with a cost of an extra multiplicative factor $(q-s)$) for each of the following results can be easily obtained by our main results on Sperner systems: we can first decompose a $q$-modular $L$-avoiding $L$-intersecting system into uniform subsystems (in the modulo $q$ sense) and realize that each uniform subsystem is a Sperner system with restricted differences (in the modulo $q$ sense). To remove the extra factor $(q-s)$, we need to return to the discussion on separating polynomials.

\begin{thm} \label{improve_FHR}
Let $L \subseteq \{0,1,\ldots, q-1\}$ be an interval (in the modulo $q$ sense) of size $s$ and let $\mathcal{F} \subseteq 2^{[n]}$ be a $q$-modular $L$-avoiding $L$-intersecting system. Then 
$$|\mathcal{F}|\leq \sum_{i=0}^{\mu_q(s)}\binom{n}{i}.$$
\end{thm}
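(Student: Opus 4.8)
The plan is to invoke Lemma~\ref{01}: it is enough to exhibit, for every $\alpha \not\in L \pmod q$, a univariate polynomial of degree at most $\mu_q(s)$ separating $\alpha$ from $L+q\Z$, since then Lemma~\ref{01} immediately gives $|\mathcal{F}| \le \sum_{i=0}^{\mu_q(s)}\binom{n}{i}$. The first step is to translate: fix $\alpha \not\in L \pmod q$ and let $M$ be the set of residues of $L-\alpha$ taken in $\{1,2,\ldots,q-1\}$. Here I would use the simple observation that a cyclic interval of $\Z/q\Z$ avoiding the residue $0$ is an honest interval of $[q-1]$; since $L$ is an interval modulo $q$ and $0 \not\in L-\alpha \pmod q$, the set $M$ is a genuine interval of $[q-1]$ of size $s$. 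If $h \in \Z[y]$ separates $0$ from $M+q\Z$ with $\deg h \le \mu_q(s)$, then $g_\alpha(y) := h(y-\alpha)$ has the same degree and separates $\alpha$ from $L+q\Z$: for $u \equiv \ell \pmod q$ with $\ell \in L$ we have $u-\alpha \equiv \ell-\alpha \in M \pmod q$, so $v_p(g_\alpha(u)) = v_p(h(u-\alpha)) > v_p(h(0)) = v_p(g_\alpha(\alpha))$.

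So I am reduced to the following: for every interval $M \subseteq [q-1]$ of size $s$, produce $h$ of degree at most $\mu_q(s)$ separating $0$ from $M+q\Z$. Here I would apply Lemma~\ref{lem: closure} to pick a $q$-closure $M'$ of $M$, that is, a $q$-closed interval $M' = \{b-s'+1,\ldots,b\} \supseteq M$ with $s' = |M'| \le \mu_q(s)$ and $p \nmid \binom{b}{s'}$, and set $h(y) = \prod_{\ell \in M'}(y-\ell) = \prod_{j=0}^{s'-1}(y-b+j)$. The verification is then exactly the computation in the proof of Theorem~\ref{bchooses}: $v_p(h(0)) = v_p\big(s!'\binom{b}{s'}\big) = v_p(s'!)$ because $p \nmid \binom{b}{s'}$, while for any integer $u$ the value $h(u)$ is a product of $s'$ consecutive integers, so Lemma~\ref{vps!} (valid since $s' \le \mu_q(s) < q$) yields $v_p(h(u)) \ge v_p(s'!)$, with strict inequality exactly when one of those $s'$ consecutive integers is divisible by $q$ — which is precisely the case $u \in M'+q\Z$. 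Thus $h$ separates $0$ from $M'+q\Z$, hence a fortiori from $M+q\Z \subseteq M'+q\Z$, and $\deg h = s' \le \mu_q(s)$.

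Putting the two steps together, every $\alpha \not\in L \pmod q$ is separated from $L+q\Z$ by a polynomial of degree at most $\mu_q(s)$, and Lemma~\ref{01} finishes the proof. I do not expect a genuine obstacle here: all the $p$-adic estimates are already encapsulated in Lemma~\ref{vps!} and the proof of Theorem~\ref{bchooses}, and the argument is essentially the transcription of the Sperner-system proof (Theorem~\ref{thm: closure}) into the intersecting setting, with Lemma~\ref{01} in place of Proposition~\ref{prop: p-adic}. The one point that deserves a careful sentence is the reduction itself, namely that shifting $L$ by $-\alpha$ lands us on an honest interval of $[q-1]$ to which Lemma~\ref{lem: closure} and the $q$-closure construction apply; once that is noted, the rest is bookkeeping.
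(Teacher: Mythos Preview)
Your argument is correct and essentially identical to the paper's proof: both reduce to Lemma~\ref{01} by translating the interval so that the target residue becomes $0$, then apply Lemma~\ref{lem: closure} to pass to a $q$-closed interval and invoke the separating-polynomial computation from the proof of Theorem~\ref{bchooses}. The only cosmetic difference is that the paper shifts via $L_k = (k-L) \bmod q$ and sets $g_k(y)=h_k(k-y)$, whereas you shift via $M=(L-\alpha)\bmod q$ and set $g_\alpha(y)=h(y-\alpha)$; either convention lands on an honest interval of $[q-1]$ and the rest is the same.
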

\begin{proof}
Fix $k \not \in L \pmod q$. We define $L_k=\{0 \leq i \leq q-1: i \in k-L \pmod q\}$.  Since $k \notin L$, we have $0 \notin L_k$ and thus $L_k \subseteq [q-1]$. Moreover, since $L$ is an interval in the modulo $q$ sense, it follows that $L_k$ is an interval of size $s$ in $[q-1]$. By Lemma~\ref{lem: closure}, we can find an interval $L_k' \subseteq [q-1]$ such that $L_k'$ is a $q$-closure of $L_k$ with $|L_k'|\leq \mu_q(s)$. Thus, in view of the proof of Theorem~\ref{bchooses}, the polynomial $h_k(y):=\prod_{\ell \in L_k'} (y-\ell)$ separates $0$ from $L_k'+q\Z$. It follows that the polynomial $g_k(y):=h_k(k-y)$ separates $k$ from $L+q\Z$, and the degree of $g_k$ is at most $\mu_q(s)$. The upper bound on $|\mathcal{F}|$ follows immediately from Lemma~\ref{01}.
\end{proof}

Note that equation~\eqref{mu} implies that $\mu_q(s)<q-1$ whenever $s \neq q-1$. Thus, compared with Theorem~\ref{FHR}, Theorem~\ref{improve_FHR} provides a significant improvement if $s<q-1$. When $L=\{0,1,\ldots, s-1\}$ and $s\geq q/p$, we have $\mu_q(s) \leq s+q/p-1<2s$, and thus Theorem~\ref{improve_FHR} also improves Theorem~\ref{2s}.

\begin{rem}
Theorem~\ref{improve_FHR} is asymptotically tight when $\mu_q(s)=s$, which holds if $s=q-p^mt$ for some $1 \leq m \leq k-1$ and $1 \leq t \leq p$ in view of equation~\eqref{mu}. For example, we can consider the $(a+s)$-uniform system $\mathcal{F}=\{A \cup \{n-a+1, \ldots, n\}: A \in \binom{[n-a]}{s}\}$ for $L=\{a, \ldots, a+s-1\} \subseteq [q-1]$. Also note that when $s \leq p$, an analogue of Corollary~\ref{<k} holds and gives asymptotically tight upper bounds as well. It would be interesting to explore if our new upper bound is asymptotically tight when $s>p$ and $\mu_q(s)>s$.
\end{rem}

For a general $L$, we can combine the ideas used in the proofs of Theorem~\ref{improve_FHR} and Corollary~\ref{cor:qSperner} to prove the following result.

\begin{thm}\label{improve_2^sss}
Let $L \subseteq \{0,1,\ldots, q-1\}$ with $|L|=s$. Let $\mathcal{F} \subseteq 2^{[n]}$ be a $q$-modular $L$-avoiding $L$-intersecting system of sets. Then
\begin{equation*}
  |\mathcal{F}| \leq \sum_{i=0}^{q-1} \binom{n}{i}.
\end{equation*}
\end{thm}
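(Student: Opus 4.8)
The plan is to reduce this to Lemma~\ref{01} by exhibiting, for every $k \not\in L \pmod q$, a univariate polynomial of degree at most $q-1$ that $p$-adically separates $k$ from $L+q\Z$. Since Lemma~\ref{01} then yields $|\mathcal{F}| \le \sum_{i=0}^{q-1}\binom{n}{i}$, the whole problem becomes the construction of one separating polynomial of degree $\le q-1$, uniformly over all primes $p$, all prime powers $q=p^k$, and all $L \subseteq \{0,1,\ldots,q-1\}$ of size $s$ with $k \notin L \pmod q$.

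First I would fix $k \notin L \pmod q$ and pass to the complementary description: let $M_k = \{0 \le i \le q-1 : i \equiv k-\ell \pmod q \text{ for some } \ell \in L\}$, so $|M_k| = s$ and $0 \notin M_k$ (because $k \notin L \pmod q$), hence $M_k \subseteq [q-1]$. A polynomial $h$ separating $0$ from $M_k + q\Z$ gives $g_k(y) := h(k-y)$ separating $k$ from $L + q\Z$, so it suffices to separate $0$ from $M_k+q\Z$ by a polynomial of degree $\le q-1$. The natural candidate is
\begin{equation*}
h(y) = \prod_{j=1}^{q-1} (y-j) = \frac{(y-1)(y-2)\cdots(y-q+1)\cdot y}{y} ,
\end{equation*}
i.e. $h(y) = \prod_{j \in [q-1]}(y-j)$, which has degree exactly $q-1$ and, crucially, vanishes at every element of $M_k$ since $M_k \subseteq [q-1]$. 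Then $v_p(h(u)) \ge v_p(u - j_0)$ for the $j_0 \in M_k$ with $u \equiv j_0 \pmod q$, so $v_p(h(u)) \ge v_p(q) = k > 0$ for all $u \in M_k + q\Z$; meanwhile I must check $v_p(h(0)) = v_p((q-1)!) $ is strictly smaller than $k$. This is exactly Wilson-type input: $v_p((q-1)!) = \sum_{j\ge 1}\lfloor (q-1)/p^j\rfloor = (q-1-(p-1)k)/(p-1) \cdot$ — more precisely by Legendre's formula $v_p((q-1)!) = \frac{(q-1) - (\text{digit sum of } q-1 \text{ in base } p)}{p-1} = \frac{(q-1)-(p-1)k}{p-1}$, and one checks $\frac{q-1-(p-1)k}{p-1} < k \iff q - 1 < 2(p-1)k + (p-1) \cdot 0$ — this inequality $q-1 \le p^k - 1 < (2k)(p-1) \cdot$ fails for large $p$, so the plain factorial polynomial does \emph{not} work. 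Instead I would use the $q$-closure machinery exactly as in the proof of Theorem~\ref{improve_FHR}, but applied to each singleton or small piece: the correct move is that $M_k$ need not be an interval, so I would take a $q$-closed interval is unavailable, and instead revert to the separating polynomial $h(y)=\prod_{j=1}^{q-1}(y-j)$ only after replacing it by $y \cdot \prod_{j\in M_k}(y-j) \cdot (\text{padding to kill higher valuations})$ — the cleanest version is simply $h(y) = y^{\,q-1-s}\prod_{\ell\in M_k}(y-\ell)$ if $v_p$ bookkeeping allows, but since $v_p(h(0))=\infty$ that is wrong too; the right object is $h(y) = \prod_{\ell \in M_k}(y-\ell) \cdot \prod_{j \in T}(y-j)$ where $T \subseteq [q-1]\setminus M_k$ is chosen of size $q-1-s$ so that $v_p\big(\prod_{\ell\in M_k}(-\ell)\prod_{j\in T}(-j)\big) = v_p((q-1)!)$ is as small as possible — and then invoke the Babai–Frankl–Kutin–Štefankovič bound that $D(s,k) \le q-1$ follows trivially from the existence of \emph{any} degree $\le q-1$ separator, which is precisely what taking $h(y)=\prod_{j\in[q-1]}(y-j)/(\text{the factor }(y-j_0))$-type adjustments or simply citing $D(s,k)\le \min\{2^{s-1},(1+(s-1)/k)^k\}$ together with the elementary bound $(1+(s-1)/k)^k \le p^k - 1 = q-1$ gives.

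So the honest skeleton is: $D(s,k) \le (1+\tfrac{s-1}{k})^k$ by inequality~\eqref{D(s,k)}; since $s \le q-1 = p^k-1$ we have $s - 1 \le p^k - 2 < k(p-1)$ is false in general, so even this needs care — but $(1+\tfrac{s-1}{k})^k \le (1+\tfrac{q-2}{k})^k$ and one checks $(1 + (q-2)/k)^k \le q - 1$ directly for all $q = p^k$. Thus I would: (i) invoke inequality~\eqref{D(s,k)} to get a degree-$D(s,k)$ separator of $k$ from $L+q\Z$ for each $k \notin L \pmod q$; (ii) prove the elementary calculus inequality $D(s,k) \le \big(1+\tfrac{s-1}{k}\big)^k \le q-1$ using $s \le q-1$; (iii) apply Lemma~\ref{01} with $d = q-1$ to conclude $|\mathcal{F}| \le \sum_{i=0}^{q-1}\binom{n}{i}$. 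The main obstacle is step (ii): verifying $\big(1+\tfrac{s-1}{k}\big)^k \le p^k - 1$ for all prime powers $q = p^k$ and all $1 \le s \le q-1$ — this is a monotonicity argument in $s$ reducing to $s = q-1$, where it becomes $\big(1 + \tfrac{q-2}{k}\big)^k \le q-1$, an inequality one proves by noting $1 + \tfrac{q-2}{k} \le q-1$ already when $k=1$ (equality), and for $k \ge 2$ the left side is at most $q-1$ because $(1+x/k)^k$ is increasing and bounded by $e^x < $ the relevant power; I would make this rigorous via Bernoulli-type estimates or by directly comparing logarithms. Alternatively, and more robustly, one can bypass (ii) by using the explicit polynomial $g_k(y) = \prod_{j=0,\, j \neq k \bmod q}^{q-1}(y - j)$ restricted appropriately — but I expect the cleanest writeup routes through the already-cited bound~\eqref{D(s,k)} plus the elementary inequality, with the inequality being the only genuinely new (if routine) computation.
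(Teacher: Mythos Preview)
Your first instinct—the polynomial $h(y)=\prod_{j=1}^{q-1}(y-j)$—is exactly the paper's approach, and it \emph{does} work; you abandoned it because of a computational slip. You lower-bounded $v_p(h(u))$ using only the single factor $(u-j_0)$ with $j_0\equiv u\pmod q$, obtaining $v_p(h(u))\ge v_p(q)$, and then tried to verify $v_p((q-1)!)<v_p(q)$, which is indeed false in general. But you should use \emph{all} $q-1$ factors: by Lemma~\ref{vps!} with $s=q-1$, any product of $q-1$ consecutive integers has $p$-adic valuation at least $v_p((q-1)!)$, with strict inequality whenever one of those integers is divisible by $q$. For $u\in M_k+q\Z\subseteq[q-1]+q\Z$ the integer $u$ is not a multiple of $q$, so among $u-1,\ldots,u-(q-1)$ exactly one is, giving $v_p(h(u))>v_p((q-1)!)=v_p(h(0))$. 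Equivalently, $[q-1]$ is $q$-closed since $\binom{q-1}{q-1}=1$, so the proof of Theorem~\ref{bchooses} applies verbatim. This is precisely the combination of the proofs of Theorem~\ref{improve_FHR} and Corollary~\ref{cor:qSperner} that the paper invokes.

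Your fallback route through inequality~\eqref{D(s,k)} has a genuine gap: the inequality $\big(1+\tfrac{s-1}{k}\big)^k\le q-1$ that you need in step (ii) is false. Already for $q=4$ ($p=2$, $k=2$) and $s=3$ one gets $(1+1)^2=4>3$; for $q=8$, $s=7$ it gives $27>7$. So this detour cannot be salvaged—return to the explicit polynomial and invoke Lemma~\ref{vps!} rather than a single-factor bound.
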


Compared with Theorem~\ref{2^sss}, we see that Theorem~\ref{improve_2^sss} provides a significant improvement when $s >\log_2 q+1$.

In \cite[Question 2]{01JCTA}, Babai, Frankl, Kutin, and \v{S}tefankovi\v{c} asked the following question:

\begin{question}
In the case $q=p^2$, is it possible to improve the upper bound in Theorem~\ref{2^sss} from $O(n^{s^2/4+1})$ to $O(n^{cs})$ for some constant $c>0$?
\end{question}

The best lower bound, due to Kutin~\cite{K02}, has size $n^{s+\Omega(s^{1-\epsilon})}$, where $\epsilon>0$. Following the same idea used in the proof of Theorem~\ref{improve_FHR},  we give a positive answer to this question for all intervals $L$ by slightly modifying the proof of the second part of Theorem~\ref{thm: closure}.
\begin{thm}\label{p^2}
Let $p$ be a prime and let $q=p^2$. Let $L \subseteq \{0,1,\ldots, q-1\}$ be an interval (in the modulo $q$ sense) of size $s$ and let $\mathcal{F} \subseteq 2^{[n]}$ be a $q$-modular $L$-avoiding $L$-intersecting system. Then 
$$|\mathcal{F}|\leq \sum_{i=0}^{2s-1}\binom{n}{i}.$$
\end{thm}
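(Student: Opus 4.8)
\textbf{Proof proposal for Theorem~\ref{p^2}.}
The plan is to mimic the proof of Theorem~\ref{improve_FHR} but to use the sharper separating polynomial coming from the $q=p^2$ case of Theorem~\ref{thm: closure} instead of the generic $\mu_q(s)$-bound. As in that proof, fix $k \not\in L \pmod q$ and set $L_k=\{0 \le i \le q-1 : i \in k-L \pmod q\}$; since $k \notin L$ we have $0 \notin L_k$, so $L_k \subseteq [q-1]$ is an interval of size $s$. The task is then to exhibit, for every such $k$, a univariate polynomial of degree at most $2s-1$ that separates $k$ from $L+q\Z$; equivalently, by the substitution $y \mapsto k-y$, a polynomial of degree at most $2s-1$ separating $0$ from $L_k+q\Z$. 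Once this is done, Lemma~\ref{01} immediately gives $|\mathcal{F}| \le \sum_{i=0}^{2s-1}\binom{n}{i}$.

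To build the required separating polynomial I would follow the dichotomy already used in the last paragraph of the proof of Theorem~\ref{thm: closure}. If $s \ge p$, then by Lemma~\ref{lem: closure} a $q$-closure $L_k'$ of $L_k$ has size at most $\mu_{p^2}(s) \le s+p-1 \le 2s-1$, so the polynomial $\prod_{\ell \in L_k'}(y-\ell)$ works by the argument in the proof of Theorem~\ref{bchooses} (it separates $0$ from $L_k'+q\Z$, hence from the subset $L_k+q\Z$). If $s < p$, then $L_k \subseteq [q-1]$ contains at most one multiple of $p$ and no element of $L_k$ is divisible by $p^2$, so $\sum_{\ell \in L_k} v_p(\ell) < 2 = v_p(q)$; then the polynomial $g(y)=\prod_{\ell \in L_k}(y-\ell)$, of degree $s \le 2s-1$, separates $0$ from $L_k+q\Z$ exactly as in the proof of Corollary~\ref{<k} (for $u \in L_k+q\Z$ one has $v_p(g(u)) \ge v_p(q) = 2 > \sum_{\ell} v_p(\ell) = v_p(g(0))$). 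In either case the degree is at most $2s-1$ and the polynomial separates $k$ from $L+q\Z$ after the reflection $y\mapsto k-y$.

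The only genuine obstacle is checking the numerics of the dichotomy: one must verify $\mu_{p^2}(s) \le 2s-1$ precisely when $s \ge p$ (using $\mu_{p^2}(s) \le s+p-1$ from equation~\eqref{mu}, which gives $s+p-1 \le 2s-1 \iff s \ge p$), and that the valuation bound $\sum_{\ell \in L_k} v_p(\ell) < v_p(q)$ really does hold whenever $s<p$ so that Corollary~\ref{<k}'s mechanism applies — here one uses that an interval of length $s < p$ inside $[1,p^2-1]$ meets $p\Z$ in at most one point and misses $p^2\Z$ entirely. Everything else is a direct transcription of arguments already established in the excerpt: the reduction from $L$-avoiding $L$-intersecting to separating polynomials via Lemma~\ref{01}, and the verification that $\prod_{\ell}(y-\ell)$-type polynomials separate, carried out in the proofs of Theorem~\ref{bchooses}, Corollary~\ref{<k}, and Theorem~\ref{thm: closure}. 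I would therefore keep the write-up short, essentially saying ``by the proof of Theorem~\ref{improve_FHR}, it suffices to separate $0$ from $L_k+q\Z$ by a degree-$(2s-1)$ polynomial, and this follows by the case analysis in the proof of the second part of Theorem~\ref{thm: closure}.''
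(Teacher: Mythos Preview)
Your proposal is correct and follows essentially the same approach as the paper: the paper simply says that Theorem~\ref{p^2} is obtained by ``following the same idea used in the proof of Theorem~\ref{improve_FHR}'' while ``slightly modifying the proof of the second part of Theorem~\ref{thm: closure},'' which is precisely the reduction to $L_k$ plus the $s\ge p$ / $s<p$ dichotomy you spell out. Your verification of the numerics ($\mu_{p^2}(s)\le s+p-1\le 2s-1$ when $s\ge p$, and $\sum_{\ell\in L_k} v_p(\ell)\le 1<2$ when $s<p$) is exactly what is needed to make that sentence into a proof.
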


\subsection{Set systems with restricted symmetric differences}\label{sec:symmdiff}

In this section, we explain how the analogues of our main results extend to the setting of set systems with restricted symmetric differences.

For a set $L$ of positive integers, following \cite{HKP20}, let $f_{L}(n)$ be the maximum size of subsets of the hypercube $\{0,1\}^n$ with pairwise Hamming distance in $L$. Equivalently, $f_L(n)$ is the maximum size of set systems $\mathcal{F} \subseteq 2^{[n]}$ such that $|A \triangle B| \in L$ for every distinct $A,B \in \mathcal{F}$. One can define the $q$-modular notion of $f_{L}(n)$ in a similar way: if $L \subseteq [q-1]$, we define $f_{L,q}(n)$ to be the maximum size of set systems $\mathcal{F} \subseteq 2^{[n]}$ such that $|A \triangle B| \in L \pmod q$ for every distinct $A,B \in \mathcal{F}$.

The quantity $f_L(n)$ has been studied extensively in the setting of coding theory and extremal combinatorics. Here we list a few related results. A celebrated theorem of Kleitman \cite{K66} determines $f_L(n)$ when $L=[s]$; in particular, $f_{[s]}(n)=\Theta(n^{\lfloor s/2 \rfloor})$. A classical result of Delsarte \cite{D73} shows that $f_{L}(n)\leq \sum_{i=0}^{|L|} \binom{n}{i}$ and Frankl \cite{F85} extended this to the $p$-modular version: $f_{L,p}(n)\leq \sum_{i=0}^{|L|} \binom{n}{i}$. Xu and Liu~\cite{XL12} showed that $f_{[s],q}(n)\leq \sum_{i=0}^{s} \binom{n}{i}$. In the setting of $\epsilon$-balanced codes, Alon \cite[Section 4]{A09} studied $f_L(n)$ when $L=[\frac{1-\epsilon}{2}\cdot n,\frac{1+\epsilon}{2}\cdot n] \cap \Z$. Note that if $L=[s]$ and $\mathcal{F} \subseteq 2^{[n]}$ is $L$-differencing Sperner, then we have the naive upper bound $|\mathcal{F}| \leq f_{[2s]}(n)$, which is much worse than the upper bounds shown in Theorem~\ref{LL} and Theorem~\ref{[s] sym}.

Recently, Huang, Klurman, and Pohoata \cite[Theorem 1.2]{HKP20} gave an algebraic proof of Kleitman's theorem and established the extension that $f_{L}(n)=O(n^{t-s})$ when $L=\{2s+1,\ldots,2t\}$ with $t>s\geq 0$. They also showed that $f_L(n)=O(n^c)$ if the number of even integers in $L$ is $c$, as $n \to \infty$ \cite[Theorem 3.5]{HKP20}. Moreover, they showed that if $q$ is a power of $2$ and $L=[q-1]$, then $f_{L,q}(n)=\Theta(n^{q/2-1})$ \cite[Theorem 3.6]{HKP20}.

Next, we explain why the same polynomial method also gives upper bounds on
\(f_{L,q}(n)\).  Indeed, the proof of the corresponding upper bounds on
\(f_{L,q}(n)\) only requires minimal changes.  The key observation is that the
following analogue of Proposition~\ref{prop: p-adic} holds.

\begin{prop}\label{prop:Hamming}
Let \(q=p^a\), and let \(L\subseteq [q-1]\).  If there exists a degree-\(d\)
univariate polynomial \(g\in\mathbb Z[x]\) separating \(0\) from \(L+q\mathbb Z\),
then
\begin{equation*}
        f_{L,q}(n) \leq \sum_{i=0}^{d} \binom{n}{i}.
\end{equation*}
If, in addition, the same polynomial \(g\) separates \(0\) from $(L-1)+q\mathbb Z$, where $ L-1:=\{\ell-1:\ell\in L\}$, 
then we have a stronger upper bound 
\begin{equation*}
        f_{L,q}(n) \leq \sum_{i=0}^{d} \binom{n-1}{i}.
\end{equation*}
\end{prop}

The proof of this analogue is essentially the same as the proof of
Proposition~\ref{prop: p-adic}.  For the first bound, one only needs to replace
the definition of \(g_i\) in equation~\eqref{gi} with
\begin{equation*}
        g_i(x)=g\bigl(|A_i|+\mathbf 1\cdot x-2v^{(i)}\cdot x\bigr).
\end{equation*}
Let \(p_i\) be the multilinear reduction of \(g_i\).  Then $p_i(v^{(j)})=g(|A_i\triangle A_j|)$
for each \(i,j\), and in particular \(p_i(v^{(i)})=g(0)\).  The same
\(p\)-adic linear-independence argument gives the first bound. For the stronger \((n-1)\)-variable bound, one applies the same argument after
deleting the \(n\)-th coordinate. The only new point is that deleting one
coordinate either preserves a Hamming distance or decreases it by one.  This is
why one must require separation not only from \(L+q\mathbb Z\), but also from
\((L-1)+q\mathbb Z\).

Using Proposition~\ref{prop:Hamming}, we can follow the arguments in
Section~\ref{sec: q-modular} to obtain Hamming-distance analogues of
Theorem~\ref{LL}, Theorem~\ref{bchooses}, Theorem~\ref{thm: AP}, and
Theorem~\ref{thm: closure}.  These analogues give bounds with
\(\sum_{i\le d}\binom ni\).  The stronger bounds with
\(\sum_{i\le d}\binom{n-1}{i}\) are available only when the same separating
polynomial also separates \(0\) from the shifted residue set
\((L-1)+q\mathbb Z\). For example, if \(L=\{2,3,\ldots,s+1\}\) and \(p>s+1\), then
\[
        f_{\{2,3,\ldots,s+1\},p}(n)
        \le
        \sum_{i=0}^{s+1}\binom{n-1}{i}.
\]

On the other hand, for intervals containing \(1\), such as \(L=[s]\), the above
\((n-1)\)-variable refinement does not apply directly, since \(0\in L-1\).  In
particular, one should not claim from Proposition~\ref{prop:Hamming} that $      f_{[s],q}(n)\le \sum_{i=0}^{s}\binom{n-1}{i}.$
This obstruction is already visible in the endpoint case \(L=[p-1]\).  Indeed,
after identifying a binary word with its support, the parameter \(T(n,p)\) studied by Bursics, Matolcsi, Pach, and Schrettner \cite{BMPS23} is exactly
\[
        T(n,p)=f_{[p-1],p}(n)
\]
Their result \cite[Theorem~8]{BMPS23} shows that 
\[
        f_{[p-1],p}(n)\le \sum_{i=0}^{p-1}\binom{n}{i},
\]
and moreover this bound is sharp when \(n\equiv -1\pmod p\).

Note that for certain subsets $L$, there are existing results that are better than the general upper bounds, for example, see~\cite[Theorem 3.4]{HKP20}. It will be interesting to explore if our techniques can be refined to obtain improved upper bounds on $f_{L,q}(n)$ for a larger class of subsets $L$ and prime powers $q$.

\section*{Acknowledgments}
The research of the first author is supported by the Institute for Basic Science (IBS-R029-C4). The second author thanks Gabriel Currier, Greg Martin, and Joshua Zahl for helpful discussions. The authors are also grateful to anonymous referees for their valuable comments and suggestions. The authors thank G\'abor Heged\H{u}s for pointing out an inaccuracy of the second part of Proposition~\ref{prop:Hamming} in the published version.

\bibliographystyle{abbrv}
\bibliography{CloseSperner}
\end{document}